\newcommand\eref[1]{(\ref{#1})}
\newtheorem{theorem}{Theorem}[section]
\newtheorem{lemma}[theorem]{Lemma}
\newtheorem{proposition}[theorem]{Proposition}
\newtheorem{definition}[theorem]{Definition}
\newtheorem{remark}[theorem]{Remark}
\newtheorem{remarks}[theorem]{Remarks}
\numberwithin{equation}{section}
\def\bE{{\mathbb E}}
\def\bN{{\mathbb N}}
\def\bC{{\mathbb C}}
\def\bR{{\mathbb R}}
\def\bL{{\mathbb L}}
\def\bT{{\mathbb T}}
\def\bS{{\mathbb S}}
\def\bB{{\mathbb B}}
\def\bP{{\mathbb P}}
\def\R{\mathbb{R}}
\def\cD{\mathcal{D}}
\def\cF{\mathcal{F}}
\def\cH{\mathcal{H}}
\def\cX{\mathcal{X}}
\def\supp{\operatorname{supp}}
\def\diam{\operatorname{diam}}
\def\diver{\operatorname{div}}
\def\card{\operatorname{card}}
\def\ker{\operatorname{ker}}
\def\spann{\operatorname{span}}
\def\ONE{{\mathbbm 1}}
\def\mm{N}
\def\MM{\mathscr{M}}
\def\bb{{\mathfrak b}}
\def\bPhi{\breve{\Phi}}
\def\cd{{c_\diamond}}
\def\diver{\operatorname{div}}
\def\goto{\longrightarrow}
\def\ww{\breve{w}}
\begin{document}

\title[Kernel and wavelet density estimators]
{Kernel and wavelet density estimators\\ on manifolds and more general metric spaces}

\author[G. Cleanthous, A. Georgiadis, G. Kerkyacharian, P. Petrushev, D. Picard]
{G. Cleanthous, A. G. Georgiadis, G. Kerkyacharian, P. Petrushev,\\ and D. Picard}

\address{Department of Mathematics, Statistics and Physics,
Newcastle University, Newcastle Upon Tyne, NE1 7RU, UK}
\email{galatia.cleanthous@newcastle.ac.uk}

\address{Department of Mathematics and Statistics,
University of Cyprus, 1678 Nicosia, Cyprus}
\email{gathana@ucy.ac.cy}

\address{University Paris Diderot-Paris 7, LPMA, Paris, France}
\email{kerk@math.univ-paris-diderot.fr }

\address{Department of Mathematics,
 University of South Carolina}
\email{pencho@math.sc.edu}

\address{University Paris Diderot-Paris 7, LPMA, Paris, France}
\email{picard@math.univ-paris-diderot.fr}

\subjclass[2010]{Primary 62G07, 58J35; Secondary 43A85, 42B35}

\keywords{kernel density estimators, wavelet density estimators, adaptive density estimators, non-parametric estimators,
Ahlfors regularity, heat kernel, Besov spaces}

\thanks{We thank the anonymous referees for a very thorough reading of an earlier version
which enabled us  to substantially improve the paper.
\\Corresponding author: Galatia Cleanthous, E-mail: galatia.cleanthous@newcastle.ac.uk}

\begin{abstract}

We consider the problem of estimating the density of observations taking values in
classical or nonclassical spaces such as manifolds and more general metric spaces.
Our setting is quite general but also sufficiently rich in allowing the development of
smooth functional calculus with well localized spectral kernels, Besov regularity spaces,
and wavelet type systems.
Kernel and both linear and nonlinear wavelet density estimators are introduced and studied.
Convergence rates for these estimators are established,
which are analogous to the existing results in the classical setting of real-valued variables.
\end{abstract}

\date{February 4, 2019}

\maketitle

\section{Introduction}\label{Introduction}

A great deal of efforts is nowadays invested in solving statistical problems,
where the data are located in quite complex domains such as  matrix spaces or surfaces (manifolds).
A seminal example in this direction is the case of spherical data.
Developments in this domain have been motivated by a number of important applications.
We only mention here some of the statistical challenges posed by astrophysical data:
denoising of signals,
testing stationarity, rotation invariance or gaussianity of signals,
investigating  the fundamental properties of the cosmic microwave background (CMB),
impainting of the CMB in zones on the sphere obstructed by other radiations,
producing cosmological maps,
exploring clusters of galaxies or point sources,
investigating the true nature of ultra high energy cosmic rays (UHECR).
We refer the reader to the overview by Starck, Murtagh, and Fadili \cite{starck2010sparse}
of the use of various wavelet tools in this domain as well as the work of some of the authors in this direction \cite{BKMP} and \cite{KPHP}.

Dealing with complex data requires the development of more sophisticated tools and statistical methods
than the existing tools and methods.
In particular, these tools should capture the natural topology and geometry of the application domain.

Our contribution will be essentially theoretical, however, our statements will be illustrated by examples issued from  different fields of applications.

Our purpose in this article is to study the {\em density estimation problem},
namely, one observes $X_1,\dots,X_n$ that are i.i.d. random variables defined on a space $\MM$
and the problem is to find a good estimation to the common density function.

This problem has a long history in mathematical statistics especially when the set $\MM$ is
$\bR^d$ or a cube in $\bR^d$ (see e.g. the monograph \cite{Tsybakov} and the references herein).
Here we will consider very general spaces $\MM$ such as Riemannian manifolds or spaces of
matrices or  graphs and  prove that with  some assumptions,
we can build an estimation theory with estimation procedures, regularity sets and upper bounds evaluations
quite parallel to what has been neatly done in $\bR^d$.
In particular we intend to develop kernel methods with upper bounds and oracle properties
as well as wavelet thresholding estimators with adaptative behavior.

If we want to roughly summarize the basic assumptions that will be made in this work,
let us mention that some of them are concerning the basic dimensional structure of the set (doubling conditions),
whereas others are devoted to construct an environment where regularity spaces
can be defined as well as kernels or wavelets can be constructed.

This setting is quite general but at the same time is sufficiently rich in allowing
the development of smooth functional calculus with well localized spectral kernels, Besov regularity spaces,
and wavelet type systems.
Naturally, the classical setting on $\R^d$ and the one on the sphere are contained in this general framework,
but also various other settings are covered. In particular, spaces of matrices, Riemannian manifolds,
convex subsets of (non-compact) Riemannian manifolds are covered.

As will be shown in this general setting, a regularity scale and a general nonparametric density estimation theory
can be developed in full generality just as in the standard case of $[0,1]^d$ or $\R^d$.
This undertaking requires the development of new techniques and methods that break new ground in the density estimation problem.
Our \textit{main contributions} are as follows:

$(a)$ In a general setting described below, we introduce \textit{kernel density estimators} sufficiently concentrated to
 establish oracle inequalities
and
$\bL^p$-error rates of convergence
for probability density functions lying in \textit{Besov  spaces}.

$(b)$ We also develop linear \textit{wavelet density estimators}
and obtain $\bL^p$-error estimates for probability density functions
in general \textit{Besov} smoothness spaces.

$(c)$ We establish $\bL^p$-error estimates
on \textit{nonlinear wavelet density estimators with hard thresholding} in our general geometric setting.
We obtain such estimates for probability density functions in general Besov spaces. 

To put the results from this article in perspective we next compare them with the results in \cite{CaKPi}.
The geometric settings in both articles are comparable and the two papers study adaptive methods.
In \cite{CaKPi} different standard statistical models
(regression, white noise model, density estimation) are considered in a Bayesian framework.
The methods are different (because we do not consider here Bayesian estimators)
and the results are also different
(since, again, we are not interested here in a concentration result of the posterior distribution).
It is noteworthy that the results in the so called {\it dense case} exhibit
the same rates of convergence.
It is also important to observe the wide adaptation properties of the thresholding estimates here
which allow to obtain minimax rates of convergence in the so called {\it sparse case},
which was not possible in \cite{CaKPi}.

The organization of this article is as follows:
In Section~\ref{sec:setting}, we describe our general setting of a doubling measure metric space
in the presence of a self-adjoint operator
whose heat kernel has Gaussian localization and the Markov property. We provide motivation and inspiration for our developments and we present some first examples, both elementary and more involved.
In Section~\ref{sec:background}, we review some basic facts related to our setting such as
smooth functional calculus, the construction of wavelet frames, Besov spaces, and other background. This section can be read quickly by a reader more motivated by the introduction of estimation procedures.
We develop kernel density estimators in Section~\ref{kdemetric} and establish $\bL^p$-error estimates
for probability density functions in general Besov spaces.
We also introduce and study linear wavelet density estimators.
In Section~\ref{Adaptive}, we introduce and study adaptive wavelet threshold density estimators.
We establish $\bL^p$-error estimates for probability density functions in Besov spaces.
Section~\ref{sec:appendix} is an appendix, where we place the proofs of some claims from previous sections.

\smallskip

\noindent
\textit{Notation:}
Throughout $\ONE_E$ will denote the indicator function of the set $E$
and $\|\cdot\|_p:= \|\cdot\|_{\bL^p(\MM,\mu)}$.
We denote by $c, c'$ positive constants that may vary at every occurrence.
Most of these constants will depend on some parameters that may be indicated in parentheses.
We will also denote by $c_0, c_1, \dots$ as well as $c_\star$, $\cd$ constants that will remain unchanged throughout.
The relation $a\sim b$ means that there exists a constant $c>1$ such that $c^{-1}a \le b\le ca$.
We will also use the notation $a\wedge b :=\min\{a, b\}$, $a\vee b :=\max\{a, b\}$
and $C^k(\bR_+)$, $k\in\bN\cup\{\infty\}$, will stand for the set of all functions with continuous derivatives
of order up to $k$ on $\bR_+:=[0,\infty)$.

\section{Setting and motivation}\label{sec:setting}

We assume that $(\MM,\rho,\mu)$ is a metric measure space equipped with a distance $\rho$ and a
positive Radon measure $\mu$.

Let $X_1,\dots,X_n$  be independent identically distributed (i.i.d.) random variables on $\MM$
with common probability having a density function (pdf) $f$ with respect to the measure $\mu$. Our purpose is to estimate the density $f$.
\\
To an estimator $\hat f_n$ of $f$, we associate its risk:
\begin{align*}
R_n(\hat f, f,p)=\bE_f\left(\int_{\MM}|\hat f_n(x)-f(x)|^p\mu(dx)\right)^{\frac1p}
=\bE_f\|\hat f_n-f\|_p, \quad 1\le p<\infty
\end{align*}
as well as its $\bL_\infty$ risk:
\begin{align*}
R_n(\hat f, f,\infty)=\bE_f\left(\mbox{ess sup}_{x\in \MM}|\hat f_n(x)-f(x)|\right)=\bE_f\|\hat f_n-f\|_\infty.
\end{align*}

We will operate in the following  setting.
Most of the material can be found in an extended form in  the papers \cite{CKP, KP}.
Note that, depending on the results we are going to establish, some of the following conditions will be assumed, others will not.

\bigskip

\subsection{Doubling and non-collapsing conditions}

The following conditions are concerning  properties related to 'dimensional' structure of $\MM$.
\\
{\bf C1.} We assume that the metric space $(\MM,\rho,\mu)$  satisfies the so called {\em doubling volume condition}:
\begin{equation}\label{doubl-0}
\mu(B(x,2r) ) \le c_0\mu(B(x,r))
\quad\hbox{for $x \in \MM$ and $r>0$,}
\end{equation}
where $B(x,r):=\{y\in \MM: \rho(x, y)<r\}$ and $c_0>1$ is a constant.
The above implies that there exist constants $c_0'\ge1$ and $d>0$ such that
\begin{equation}\label{doubling}
\mu(B(x,\lambda r) ) \le c_0'\lambda^d \mu(B(x,r))
\quad\hbox{for $x \in \MM$, $r>0$, and $\lambda >1$,}
\end{equation}
The least $d$ such that (\ref{doubling}) holds is the so called {\em homogeneous dimension} of $(\MM,\rho,\mu)$
\\
From now on we will use the notation
$|E|:=\mu(E)$ for $E\subset \MM$.

\smallskip

In developing adaptive density estimators in Section~\ref{Adaptive}
we will additionally assume that $(\MM, \rho, \mu)$ is a compact measure space with $\mu(\MM)<\infty$
satisfying the following condition:

\smallskip

\noindent
{\bf C1A.} {\em Ahlfors regular volume condition:} There exist constants $c_1,c_2>0$ and $d>0$ such that
\begin{equation}\label{doubling-0}
c_1r^{d}\le |B(x,r)| \le c_2 r^{d}
\quad\hbox{for $x \in \MM$ and $0<r\le \diam(\MM)$.}
\end{equation}
Clearly, condition {\bf C1A} implies conditions {\bf C1} and the following condition {\bf C2} as well,
with $d$ from \eqref{doubling-0} being the homogeneous dimension of $(\MM,\rho,\mu)$.

\smallskip
These doubling conditions have been introduced in Harmonic Analysis in the 70's by R. Coifman and G. Weiss \cite{Coifman-weiss}.

It is interesting already to notice that $d$ will indeed play the role of a dimension in the statistical results as well.
Condition {\bf C1A} is obviously true for $\MM= \bR^d$  with $\mu$ the Lebesgue measure.

Also, the doubling condition  is precisely related to the \textit{metric entropy} using the following lemma
whose elementary proof can be found for instance in \cite[Proposition 1]{CaKPi}.
For $\epsilon >0$, we define,  as usual,  the covering number $N(\epsilon, \MM) $ as the smallest number of balls of radius
$\epsilon $ covering $\MM$.

\begin{lemma}
Under the condition {\bf C1A} and if $\MM$ is compact,
there exist constants $c'>0$, $c''>0$ and $\epsilon_0>0$ such that
\begin{equation}
\label{poly-entropie1} \frac{1}{c'}  \Big(\frac 1\epsilon \Big)^d
\leq N(\epsilon, \MM)   \leq  \frac{2^d}{c''} \Big(\frac 1\epsilon \Big)^d,
\end{equation}
for all $ 0<\epsilon\leq \epsilon_0$.
\end{lemma}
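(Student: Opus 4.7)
The plan is to prove the two bounds separately using a standard volume-packing argument that combines the Ahlfors upper and lower volume estimates from condition {\bf C1A}. Throughout, let $D := \diam(\MM)$, which is finite since $\MM$ is compact. A preliminary observation I will use repeatedly is that for any $x_0\in\MM$ and any $r\le D$ one has $B(x_0,r)\subset\MM$, hence $|\MM|\ge c_1 r^d$; letting $r\uparrow D$ yields $|\MM|\ge c_1 D^d$. On the other hand, for any $r>D$ we have $\MM\subset B(x_0,r)$, so $|\MM|\le c_2 D^d$ (after passing to the limit and invoking the upper Ahlfors bound on slightly larger radii, which still fall in the prescribed range of $r$).

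For the \textbf{upper bound}, I would choose a maximal $\epsilon$-separated set $\{x_1,\dots,x_N\}\subset\MM$, i.e.\ $\rho(x_i,x_j)\ge\epsilon$ for $i\neq j$ and the collection is maximal with this property. Maximality forces $\MM\subset\bigcup_i B(x_i,\epsilon)$, so $N(\epsilon,\MM)\le N$. The balls $B(x_i,\epsilon/2)$ are pairwise disjoint and sit inside $\MM$ (assuming $\epsilon/2\le D$), hence by the Ahlfors lower bound
\[
N\cdot c_1 (\epsilon/2)^d \;\le\; \sum_{i=1}^N |B(x_i,\epsilon/2)| \;\le\; |\MM| \;\le\; c_2 D^d.
\]
This gives $N(\epsilon,\MM)\le (2^d c_2 D^d / c_1)\,\epsilon^{-d}$, which is the claimed form with $c'':= c_1/(c_2 D^d)$.

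For the \textbf{lower bound}, let $\{B(y_j,\epsilon)\}_{j=1}^{N(\epsilon,\MM)}$ be an optimal cover. The Ahlfors upper bound gives $|B(y_j,\epsilon)|\le c_2\epsilon^d$ (valid provided $\epsilon\le D$), and together with the lower bound $|\MM|\ge c_1 D^d$ established above, subadditivity of $\mu$ yields
\[
c_1 D^d \;\le\; |\MM| \;\le\; \sum_{j=1}^{N(\epsilon,\MM)} |B(y_j,\epsilon)| \;\le\; N(\epsilon,\MM)\cdot c_2\epsilon^d,
\]
so $N(\epsilon,\MM)\ge (c_1 D^d/c_2)\,\epsilon^{-d}$, which is the claimed bound with $c':= c_2/(c_1 D^d)$.

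Finally, I would take $\epsilon_0 := D$ (or any smaller positive constant) to guarantee that all radii appearing in the applications of the Ahlfors inequality lie in the admissible range $0<r\le D$. The only subtle point — really the only place one has to be careful — is the harmless boundary issue that the Ahlfors condition is stated for $0<r\le D$, which is why the existence of a compactness–based lower bound $|\MM|\ge c_1 D^d$ must be justified by approaching $D$ from below rather than evaluating exactly at $r=D$; this is where the compactness of $\MM$ enters substantively.
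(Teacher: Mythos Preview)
Your argument is the standard volume-packing proof and is correct. The paper itself does not supply a proof of this lemma; it simply refers the reader to \cite[Proposition~1]{CaKPi}, so there is no in-paper argument to compare against.

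One small technical quibble: your justification of $|\MM|\le c_2 D^d$ via ``slightly larger radii, which still fall in the prescribed range of $r$'' is not quite right, since condition {\bf C1A} is stated only for $0<r\le D$ and radii $r>D$ lie outside that range. The easiest fix is to observe that $\MM\subset B(x_0,2D)$ and invoke the doubling condition \eqref{doubl-0} (which {\bf C1A} implies) to get $|\MM|\le c_0|B(x_0,D)|\le c_0 c_2 D^d$; or simply note that $\mu(\MM)<\infty$ by compactness, which is all that is needed for an unspecified constant $c''$. This is cosmetic and does not affect the substance of your proof.
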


\smallskip

\noindent
{\bf C2.} {\em Non-collapsing condition:} There exists a constant $c_3>0$ such that
\begin{equation}\label{non-collapsing}
\inf_{x\in \MM} |B(x,1)|\ge c_3>0.
\end{equation}
This condition is not necessarily very restrictive. For instance, it is satisfied if $\MM$ is compact. It is satisfied for $\bR^d$ if $\mu$ is the Lebesgue measure, but untrue for $\bR$ if $\mu$ is a Gaussian measure.

\subsection{Smooth operator}

Here comes an important assumption which may seem strange to the reader at first glance.
Before entering into the specificity of the set of assumptions described below, let us explain some motivations.

One rather standard method in density estimation is the kernel estimation method,
i.e. considering a family of functions indexed by $\delta>0$:
$K_\delta: \; \MM\times \MM \goto \bR$ an
 associated kernel density estimator is defined by
\begin{equation}\label{kermeth}
\widehat{K}_\delta(x):=\frac{1}{n}\sum\limits_{i=1}^{n} K_\delta(X_i,x), \quad x\in \MM.
\end{equation}

In $\bR^d$, an important family is the family of translation kernels
$K_\delta(x,y)=[\frac1\delta ]^dG( \frac{x-y}\delta)$, where $G$ is a function $\bR^d\goto \bR$.
When $\MM$ is a more involved set such as a manifold or a set of graphs, of matrices,
the simple operations of translation and dilation may not be meaningful.
Hence, even finding a family of kernels to start with might be a difficulty.
It will be shown in Section \ref{kdemetric} that the following assumptions provide quite 'naturally' a family of kernels.

When dealing with a kernel estimation method, it is standard to consider two quantities :
\begin{align*}
b_\delta(f):=\|\bE_f \widehat{K}_\delta -f\|_p,\quad \|\xi_f\|_p:=\|\widehat{K}_\delta-\bE_f \widehat{K}_\delta\|_p.
\end{align*}

The analysis of the second  term (stochastic term)  $\|\xi_f\|_p$, can be reduced via Rosenthal inequalities
to proper bounds on norms of $K_\delta(\cdot,\cdot)$ and $f$ (see the Lemmas \ref{lem:prep}, \ref{lemmaforlinear}),
where in particular the assumptions of the previous subsection are also important).

The analysis of the first term $b_\delta(f)$ is linked to the approximation properties of the family $\bE_f \widehat{K}_\delta$.
One can stop at this level and precisely express the performance of an estimator in terms of $\|\bE_f \widehat{K}_\delta -f\|_p$.
This is the purpose of oracle inequalities (see Theorem \ref{thm:oracle-ineq}).

However, it might seem more convincing if one can relate the rate of approximation of the family
$\|\bE_f \widehat{K}_\delta -f\|_p$ to regularity properties of the function $f$.
It is standardly proved (see e.g. \cite{HKPT}),
that in $\bR^d$ if $K$ is a translation family with mild properties on $K$,
then polynomial rates of approximation are obtained for functions with Besov regularity.

Hence, an important issue becomes finding spaces of regularity associated to a possibly complex set $\MM$.
On a compact metric space $(M, \rho)$ one can always define the scale of $s$-Lipschitz spaces
defined by the following norm
\begin{equation}\label{def-Lip}
\|f\|_{Lip_s} := \|f\|_\infty + \sup_{x\ne y}  \frac{|f(x)-f(y)|}{\rho(x,y)^s},\quad 0<s\le 1.
\end{equation}

In Euclidian spaces  a function  can be much more regular than Lipschitz, for instance differentiable at different orders,
or belong to some Sobolev or Besov spaces.

When $\MM$ is a set where there is no obvious notion of differentiability, one can make the observation that
in $\bR^d$ or Riemannian manifolds, regularity properties can also be expressed via the associated Laplacian.
The Laplacian itself is an operator of order 2, but its square root is of order 1 and can be interpreted as
a substitute for derivation.

We will use this analogy to introduce an operator $L$ playing the role of a Laplacian.
However, conditions are needed to ensure that this analogy makes sense and can lead to
a scale of spaces with suitable properties (which for instance, for small regularities correspond to Lipschitz spaces).
This is why we adopt the setting introduced in \cite{CKP, KP}. This setting is  rich enough  to develop
a Littlewood-Paley theory in almost complete analogy with the classical case on $\R^d$, see \cite{CKP,KP}.
In particular, it allows to develop Besov spaces $B^s_{pq}$ with all sets of indices.
At the same time this framework is sufficiently general to cover a number of interesting cases
as will be shown in what follows.

\bigskip

Our main assumption is that the space $(\MM,\rho,\mu)$
is complemented by an essentially self-adjoint non-negative operator $L$ on $\bL^2(\MM,\mu)$,
mapping real-valued to real-valued functions,
such that the associated semigroup $P_t=e^{-tL}$ consists of integral operators with the
(heat) kernel $p_t(x,y)$ obeying the following conditions:

\smallskip

\noindent
{\bf C3.} {\em Gaussian localization:} There exist constants $c_4, c_5>0$ such that
\begin{equation}\label{Gauss-local}
|p_t(x,y)|
\le \frac{c_4\exp\Big(-\frac{c_5\rho^2(x,y)}t\Big)}{\big(|B(x, \sqrt{t})||B(y, \sqrt{t})|\big)^{1/2}}
\quad\hbox{for} \;\;x,y\in \MM,\,t>0.
\end{equation}

\noindent
{\bf C4.} {\em H\"{o}lder continuity:} There exists a constant $\alpha>0$ such that
\begin{equation}\label{lip}
\big|  p_t(x,y) - p_t(x,y')  \big|
\le c_4\Big(\frac{\rho(y,y')}{\sqrt t}\Big)^\alpha
\frac{\exp\Big(-\frac{c_5\rho^2(x,y)}t\Big)}{\big(|B(x, \sqrt{t})||B(y, \sqrt{t})|\big)^{1/2}}
\end{equation}
for $x, y, y'\in \MM$ and $t>0$, whenever $\rho(y,y')\le \sqrt{t}$.

\smallskip

\noindent
{\bf C5.} {\em Markov property:}
\begin{equation}\label{hol3}
\int_\MM p_t(x,y) d\mu(y)= 1
\quad\hbox{for $x\in \MM$ and $t >0$.}
\end{equation}

Above $c_0, c_1, c_2, c_3, c_4, c_5, d, \alpha>0$ are structural constants.
These technical assumptions express that fact that the Heat kernel associated
with the operator $L$ 'behaves' as the standard Heat kernel of $\bR^d$.

\subsection{Typical examples}\label{typic}

Here we present some examples of setups that are covered by the setting described above.
We will use these examples in what follows to illustrate our theory.
More involved examples will be given Section~\ref{Examples}.

\subsubsection{Classical case on $\MM=\bR^d$}

Here $d\mu$ is the Lebesgue measure and $\rho$ is the Euclidean distance on $\bR^d$.
In this case we consider the operator
$$ -L(f)(x)= \sum_{j=1}^d \partial_i^2 f(x)= {\rm div} (\nabla f)(x)$$
defined on the space $\cD(\bR^d)$ of $C^\infty$ functions with compact support.
As is well known
the operator $L$ is positive essentially self-adjoint and has a unique extension to
a positive self-adjoint operator.
The associate semigroup $e^{t\Delta}$ is given by the operator with the Gaussian kernel:
$p_t(x,y)= (4\pi t)^{-\frac d2} \exp\big(-\frac{|x-y|^2}{4t}\big)$.

\subsubsection{Periodic case on $\MM=[-1,1]$}\label{torus}

Here $d\mu$ is the Lebesgue measure and $\rho$ is the Euclidean distance on the circle.
The operator is $L(f)=-f''$ defined on the set on infinitely differentiable periodic functions.  
It has eigenvalues $k^2\pi^2$ for $k\in \bN_0$ and eigenspaces
$$ 
\ker(L)=\cH_0=  \spann \Big\{ \frac{1}{\sqrt 2}\Big\},
\quad
\ker(L- k^2\pi^2)= \cH_k={\rm span}\,\{\cos k\pi x, \sin k\pi x \}.
$$

\subsubsection{Non-periodic case on $\MM=[-1,1]$ with Jacobi weight}\label{jacobi}
(This example is further developed in Subsection 6.1.)
Note that this example can arise when dealing with data issued from a density which
itself has received a folding treatment such as in the Wicksell problem (\cite{josi90,needvd}).
Now, the measure is 
$$
d\mu_{\alpha,\beta}(x)= W_{\alpha, \beta}(x)dx= (1-x)^\alpha(1+x)^\beta dx,
\quad \alpha,\beta>-1, 
$$ 
the distance $\rho$ is the Euclidean distance, and $L$ is the Jacobi operator 
$$
-L(f) = \frac 1{W_{\alpha,\beta}(x)} D_x \big((1-x^2)W_{\alpha,\beta}(x) D_x\big) f.
$$
Conditions {\bf C1-C5} are satisfied, but not the Ahlfors condition {\bf C1A}, unless $\alpha=\beta=-\frac 12.$
The discrete spectral decomposition of $L$ is given by one dimensional spectral spaces:
$$
\bL^2(M,\mu_{\alpha,\beta})=\bigoplus E_{\lambda_k^{\alpha,\beta} },
\quad 
E_{\lambda_k^{\alpha,\beta}}
= \ker(L-\lambda_k^{\alpha,\beta}I_d) = \spann\big\{ p_k^{\alpha, \beta}(x)\big\}, 
$$ 
where $p_k^{\alpha, \beta}(x) $ is the $k$th degree Jacobi polynomial and
$\lambda_k^{\alpha,\beta}=k(k+\alpha +\beta +1)$.
 
\subsubsection{Riemannian manifold $\MM$ without boundary}\label{rieman} %
If $\MM$ is a Riemannian manifold,
then the Laplace operator $\Delta_M$ is well defined on $M$ (see \cite{grigor}) and we consider
$$
L=-\Delta_M.
$$
If $\MM$ is compact, then conditions {\bf C1-C5} are verified, including the Ahlfors condition {\bf C1A}.
Furthermore, there exists an associated discrete spectral decomposition 
with finite dimensional spectral eigenspaces of $L$:
$$
\bL^2(\MM,\mu)= \bigoplus E_{\lambda_k},
\quad
E_{\lambda_k} = \ker(L-\lambda_kI_d),
\quad  
\lambda_0=0 <\lambda_1 < \lambda_1 <\cdots.
$$

\subsubsection{Unit sphere $\MM=\bS^{d-1}$ in $\bR^d, \; d\geq 3$} \label{sphere} 

This is the most famous  Riemannian manifold with the induced structure from $\bR^d$.
Here $d\mu$ is the Lebesgue measure on $\bS^{d-1}$,
$\rho$ is the geodesic distance on $\bS^{d-1}$:
$$
\rho(\xi, \eta)=  \arccos (\langle\xi, \eta \rangle_{\bR^d}),
$$
and $L:=-\Delta_0$ with $\Delta_0$ being the Laplace-Beltrami operator on $\bS^{d-1}$.
The spectral decomposition of the operator $L$ can be described as follows:
$$
\bL^2(\bS^{d-1},\mu)= \bigoplus E_{\lambda_k},
\quad 
E_{\lambda_k}= ker(L-\lambda_kI_d),
\quad \lambda_k= k(k+d-2).  
$$
Here $E_{\lambda_k}$ is the restriction to $\bS^{d-1}$ of harmonic homogeneous polynomials of degree $k$
(spherical harmonics), see \cite{Stein}.
We have $\dim(E_{\lambda_k})=\binom{d-1}{d+k-1}- \binom{d-1}{d+k-3}.$

\subsubsection{Lie group of matrices: $\MM=SU(2)$}\label{su2}

This example is interesting in astrophysical problems, especially in the measures associated to the CMB, where instead of only measuring the intensity of the radiation we also measure spins.
By definition
$$
SU(2) := \Big\{\left(\begin{array}{cc}a & b \\-\overline b &\overline a\end{array}\right), 
\quad a,b \in \bC, |a|^2 +|b|^2=1 \Big\}.
$$
Thus  
$$
q \in SU(2) \leftrightarrow  q\in  M(2, \bC), 
\quad q^{-1}=-q^*, 
\quad \det (q)=1.
$$
This a compact group which topologically is the sphere $\bS^3 \subset \bR^4.$
So, if
$$ 
x=  \left(\begin{array}{cc} x_1+ix_2 & x_3 +i x_4 \\-(x_3 -i x_4) &x_1- ix_2 \end{array}\right); \; 
y=  \left(\begin{array}{cc} y_1+iy_2 & y_3 +i y_4 \\-(y_3 -i y_4) &y_1- iy_2 \end{array}\right)
$$
with $\|y \|^2=\sum_i y_i^2=1= \|x \|^2=\sum_i x_i^2$, 
then
$$ 
\langle x,y \rangle_4=\sum _i x_iy_i= \frac 12 Tr [xy^*].
$$
Thus
$$
\rho_{SU(2)}(x,y)= \arccos \frac 12 Tr [xy^*])
$$
and for any $q,x,y \in SU(2)$
$$
v=\rho_{SU(2)}(qx,qy)=\rho_{SU(2)}(xq,yq)=\rho_{SU(2)}(x,y).
$$
The eigenvalues of $L= -\Delta$ are $\lambda_k= k(k+2)$ and the dimension of the respective eigenspaces  
$E_{\lambda_k}$ is $(k+1)^2.$

\bigskip

\begin{remark}\label{boundaries}
Looking at some of these examples an important question already arises: how to choose in a given problem the distance $\rho$ as well as the dominating measure  before even choosing the operator $L$ and a class of regularity?
In $\bR^d$, most often the euclidean distance and the Lebesgue measure seems  more or less unavoidable.
In some other cases it might not be   so obvious.

Let us take for instance the simple case of $\MM$ being an interval $[-1,1]$. The cases of the ball, the simplex (see Section~\ref{Examples}) and more generally sets with boundaries give rise in fact to identical discussions therefore we will focus on the case of the interval.

So, if $\MM=[-1,1]$, a possible  choice -and probably the most standard one in statistical examples could be taking $\rho$ as the euclidean distance and $\mu$ as the Lebesgue measure.
 Then  the usual translation kernels are available  as well as the standard wavelet bases. However ``something" -which generally is often swept under the carpet or not really detailed- has to be ``done" about the boundary points $\{-1, 1\}$. Often special regularity conditions are assumed about these boundary points such as $f(-1)=f(1)=0$ (subsection~\ref{torus}),  which de facto lead to different methods for representing the functions to be estimated.

Let us now look at the choices  (again for the interval $[-1,1]$ Subsection~\ref{jacobi}) that are made in the ``Jacobi" case.  The distance $\rho(x,y) =|\arccos x -\arccos y|$, suggests a one-to-one correspondance with the semi-circle. The measure $\mu$ ($d\mu_{\alpha,\beta}(x)  = (1-x)^{\alpha}(1+x)^{\beta} dx, \quad \alpha, \beta>-1$) suggests that  the points in the middle of the interval (say $[-\frac12,+\frac12]$, where the measure behaves as the Lebesgue measure) will not be weighted in the same way as the points near the boundary. And in some cases, this  makes perfect sense: for instance if one needs to give a hard weight on these points  because they require special attention, or at the contrary a small one.

Apart from these considerations, there are in fact two measures in the family $\mu_{\alpha,\beta}$, that are undeniable in the case $\MM=  [-1,1]$ equipped with the distance $\rho(x,y) =|\arccos x -\arccos y|$. The first one is the Lebesgue measure (because Lebesgue is always undeniable), corresponding to $\alpha=\beta=0$.
The second one is $\mu_{-\frac12,-\frac12}$, because in that case there is a one-to-one identification between $(\MM,\rho,\mu_{-\frac12,-\frac12})$ and the semi circle equipped with the euclidean distance and Lebesgue measure.

If we look more precisely into these two choices, we see that for the last case,  all the required conditions including the Ahlfors one are satisfied, and the dimension $d=1$, which is  intuitively expected.
Let us now observe that the case of the Lebesgue measure  $\mu_{0,0}$ would lead to a larger dimension $d=2$.
\end{remark}

\section{Background}\label{sec:background}

In this section we collect some basic technical facts and results related to the setting described in Section~\ref{sec:setting}
that will be needed for the development of density estimators.
Most of them can be found in \cite{CKP, GKKP, KP}.

\subsection{Functional calculus}\label{subsec:func-calc}

A key trait of our setting is that it allows to develop a smooth functional calculus. If we recall that the operator $L$ has been introduced as a substitute for Laplacian, we also have  to recall that in $\bR^d$,  regularity properties of the functions are most often expressed in terms of Fourier transforms which are corresponding to spectral decompositions of the Laplacian. Hence there is no surprise that we will consider the spectral decomposition of $L$ and define an associated functional calculus.

Let $E_\lambda$, $\lambda \ge 0$, be the spectral resolution associated with the operator $L$ in our setting.
As $L$ is non-negative, essentially self-adjoint
and maps real-valued to real-valued functions, then for any real-valued, measurable, and
bounded function $h$ on $\R_+$ 
\begin{equation}
h(L):=\int_0^\infty h(\lambda)dE_\lambda,\label{def:multiplier}
\end{equation}
is well defined on $\bL^2(\MM)$.
The operator $h(L)$, called \textit{spectral multiplier},
is bounded on $\bL^2(\MM)$, self-adjoint, and maps real-valued to real-valued functions \cite{Yosida}.
We will be interested in integral spectral multiplier operators $h(L)$.
If $h(L)(x, y)$ is the kernel of such an operator, it is real-valued and symmetric.
From condition {\bf C4} of our setting we know that $e^{-tL}$ is an integral operator
whose (heat) kernel $p_t(x, y)$ is symmetric and real-valued: $p_t(y,x) = p_t(x, y)\in \R$.

\subsubsection{Examples}\label{typic-sm}

Let us revisit some of the examples given in Subsection~\ref{typic}:

(a) Let $\MM=[-1,1]$ be in the periodic case (Subsection~\ref{torus}).
It is readily seen that the projection operators are:
$$P_{\cH_0}(x,y)= \frac 12,\quad  P_{\cH_k}(x,y)= \cos k\pi(x-y).$$
Hence, formally,
$$
h( L)(x,y)=  \frac 12h(0) + \sum_{ k\ge 1} h( k^2\pi^2) \cos k\pi(x-y),
\quad x,y\in [-1,1].
$$

(b) If $\MM$ is a Riemanian manifold (Subsection~\ref{rieman}), then
$h(L)$ is a kernel operator with kernel
$$
h( L)(x,y)=\sum_k  h({ \lambda_k}) P_{k}( x,y)
$$
with $P_{k}(x,y)= \sum_i v_i^{\lambda_k}(x)\overline{v_i^{\lambda_k}(y)} $,
where $ v_i^{\lambda_k}(x), i=1,\ldots, \dim(E_{\lambda_k})$
is an orthonormal basis of $E_{\lambda_k}.$

\smallskip

(c) In the case of the sphere (Subsection \ref{sphere}),
the orthogonal projector operator
$P_{E_{\lambda_k}}: \bL^2 (\bS^{d-1}) \mapsto E_{\lambda_k}$
is a kernel operator with kernel of the form
$$
P_{E_{\lambda_k}}(\xi,\eta)= L_k(\langle\xi, \eta \rangle_{\bR^d}),
\;\;\hbox{where}\;\;
L_k (x)= \frac 1{|\bS^{d-1}|}\Big(1+\frac{k}{\nu}\Big)C_k^{\nu}(x), \;\; \nu=\frac{d-2}2.
$$
Here $C_k^{\nu}(x)$ is the Gegenbauer polynomials of degree $k$.
Usually, the polynomials $\{C_k^{\nu}(x)\}$ are defined by the generating function
$$
\frac 1{(1-2rx+r^2)^\nu}=\sum_{k\ge 0}r^k C_k^{\nu}(x),
\quad |r|<1, \; |x|<1.
$$
Hence, formally
$$
h( L)(\xi,\eta)=\sum_k  h(k(k+d-2))L_k(\langle\xi, \eta \rangle_{\bR^d}),
\quad \xi,\eta\in\; \bS^{d-1}.
$$

\smallskip

(d) In the case of $SU(2)$ (Subsection~\ref{su2}),
the orthogonal projector operator
$P_{E_{\lambda_k}}: \bL^2 (SU(2)) \mapsto E_{\lambda_k}$
is the operator with kernel
$$
P_{E_{\lambda_k}}(f)(\xi, \eta)= L_k\big(\frac 12 Tr [\xi \eta^*]\big)
$$
where $L_k (x)= \frac 1{|\bS^3|}(1+k)C_k^{1}(x)$.
Hence, formally
$$
h( L)(\xi,\eta)=\sum_k h(k(k+2)) L_k\big(\frac 12 Tr [\xi \eta^*]\big),
\quad \xi,\eta\in\; SU(2).
$$

Our further development will heavily depend on the following result from the
smooth functional calculus induced by the heat kernel, developed in \cite[Theorem~3.4]{KP}.
It asserts the localization properties of general spectral multipliers of
the form $g(\delta \sqrt L)$ (corresponding to  functions of the form $h(u)=g(\delta\sqrt{u})$ in \eref{def:multiplier}).
Again the appearance of the square root is by analogy with the Laplacian,
which is an operator of degree 2. It is also interesting to remark that (\ref{local-g})
is valid in $\bR^d$ when $g(\delta \sqrt L)$ is replaced by  $[\frac1\delta ]^dG( \frac{x-y}\delta)$,
where $G$ is a bounded compactly supported function $\bR^d\goto \bR$ for instance.
This result is a building block for the properties of the kernel estimators defined in the sequel.

\begin{theorem}\label{thm:S-local-kernels}
Let $g\in C^\mm(\bR)$, $\mm>d$, be even, real-valued, and $\supp g\subset [-R,R]$, $R>0$.
Then $g(\delta \sqrt L)$, $\delta>0$, is an integral operator with kernel $g(\delta \sqrt L)(x, y)$
satisfying
\begin{equation}\label{local-g}
\big|g(\delta \sqrt L)(x, y)\big|
\le c |B(x, \delta)|^{-1}\big(1+\delta^{-1}\rho(x, y)\big)^{-\mm+\frac{d}{2}},\;\forall\; x,\; y \; \in \; \MM,
\end{equation}
where
$c>0$ is a constant depending on $\|g\|_{\infty}$, $\|g^{(N)}\|_{\infty}$, $\mm$, $R$
and the constants $c_0, c_4, c_5$ from our setting.

Furthermore, for any $\delta>0$ and $x\in \MM$
\begin{equation}\label{int-g}
\int_\MM g(\delta \sqrt L)(x, y)d\mu(y)=g(0).
\end{equation}
\end{theorem}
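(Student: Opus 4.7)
The plan is to combine smooth functional calculus for $L$ with the Gaussian heat-kernel bound \textbf{C3} and the doubling condition \eqref{doubling}. Since $g$ is even with $\supp g\subset[-R,R]$, the function $G(v):=g(\sqrt v)$ is compactly supported and inherits the smoothness of $g$, and $g(\delta\sqrt L)=G(\delta^2 L)$; this reduces the problem to bounding the kernel of $G(\delta^2 L)$.

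To establish \eqref{local-g} I would factor $G(v)=F(v)\,e^{-v}$ with $F(v):=G(v)\,e^v$ still compactly supported and $C^{\mm}$, so that $g(\delta\sqrt L)=F(\delta^2 L)\,e^{-\delta^2 L}$. The right-hand factor has kernel $p_{\delta^2}(x,y)$, which by \textbf{C3} carries the Gaussian decay $\exp(-c\rho^2(x,y)/\delta^2)\bigl(|B(x,\delta)||B(y,\delta)|\bigr)^{-1/2}$, already much stronger than any polynomial in $\rho/\delta$. The left factor $F(\delta^2 L)$ is controlled via the spectral theorem combined with a suitable representation of $F$ in terms of the heat semigroup (for example an expansion of $F$ against operators $e^{-tL}$ on a dyadic scale in $t$, or an appropriate Chebyshev-type representation); the smoothness $F\in C^{\mm}$ then converts into polynomial kernel decay of order $\mm$. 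Glueing the two pieces via an $L^2\to L^\infty$ duality argument based on $\|p_{\delta^2}(x,\cdot)\|_2^2\le c|B(x,\delta)|^{-1}$ (a direct consequence of \textbf{C3}) yields a pointwise bound of the stated form, except with the symmetric volume factor $(|B(x,\delta)||B(y,\delta)|)^{-1/2}$. To pass to the one-sided $|B(x,\delta)|^{-1}$ I invoke doubling as $|B(y,\delta)|\le c(1+\delta^{-1}\rho(x,y))^{d}|B(x,\delta)|$, which costs $d/2$ powers of $(1+\delta^{-1}\rho(x,y))$ and accounts exactly for the gap between the smoothness index $\mm$ and the exponent $-\mm+d/2$ of \eqref{local-g}.

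For \eqref{int-g}, the Markov property \textbf{C5} gives $e^{-tL}\ONE=\ONE$ for every $t>0$, so $\ONE$ is a bounded eigenfunction of $L$ at eigenvalue $0$. The spectral theorem therefore propagates this to $h(L)\ONE=h(0)\ONE$ for every bounded Borel function $h$; applied to $h(\lambda)=g(\delta\sqrt\lambda)$ this reads $g(\delta\sqrt L)\ONE=g(0)\ONE$, which, upon unwinding the integral kernel, is exactly \eqref{int-g}.

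The main obstacle is achieving the sharp exponent $-\mm+d/2$: since the doubling step unavoidably loses $d/2$ powers of $(1+\delta^{-1}\rho)$, the smoothness $C^{\mm}$ of $g$ must be piped through the functional-calculus step without leaking any further powers. The Gaussian tail in \textbf{C3} (decay in $\rho^2/t$, not merely polynomial) is what makes this clean separation possible, as it comfortably absorbs the slack introduced by doubling before being relaxed to the advertised polynomial bound.
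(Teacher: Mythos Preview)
The paper does not prove this theorem; it is quoted from \cite[Theorem~3.4]{KP} (see also \cite[Theorem~3.1]{CKP}), so there is no in-paper proof to compare against. That said, your outline is in the same spirit as the argument in \cite{CKP,KP}: one does factor through the heat semigroup and one does lose exactly $d/2$ powers when passing from the symmetric volume factor to $|B(x,\delta)|^{-1}$ via \eqref{D2}. Your explanation of \eqref{int-g} and of the $d/2$ loss is accurate.

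However, your proposal has a genuine gap at the decisive point. The sentence ``the smoothness $F\in C^{\mm}$ then converts into polynomial kernel decay of order $\mm$'' is precisely the content of the theorem, and you have not said how this conversion happens. An $L^2\!\to\!L^\infty$ duality based on $\|p_{\delta^2}(x,\cdot)\|_2^2\le c|B(x,\delta)|^{-1}$ will give you on-diagonal size, but it produces no off-diagonal decay whatsoever; likewise, a dyadic expansion of $F$ against $e^{-tL}$ controls operator norms, not pointwise kernels at distance $\rho(x,y)$. The proofs in \cite{CKP,KP} use a specific device: one writes, for integers $m$, a representation of the form
\[
g(\delta\sqrt L)(x,y)\;=\;c_m\int_0^\infty G_m(s)\,(sL)^m e^{-sL}(x,y)\,\frac{ds}{s},
\]
where $G_m$ is obtained from $g$ by an integral transform that is bounded exactly under the hypothesis $g\in C^{\mm}$, and then invokes pointwise Gaussian bounds for the derived kernels $(sL)^m e^{-sL}(x,y)$ (these follow from \textbf{C3} by analyticity of the semigroup). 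It is this integral representation, not a generic duality, that converts $\mm$ derivatives of $g$ into $\mm$ powers of spatial decay.

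A second, smaller gap: your argument for \eqref{int-g} assumes $\ONE\in\bL^2(\MM,\mu)$ so that the spectral theorem applies to it. In the non-compact case $\mu(\MM)=\infty$ this fails, and one must instead use the already-established decay \eqref{local-g} to justify the integral, together with an approximation of $\ONE$ by $\bL^2$ functions on which the Markov property \textbf{C5} can be invoked.
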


\subsection{Geometric properties}

Conditions {\bf C1} and {\bf C2} yield
\begin{equation}\label{DNC}
|B(x, r)|\ge (c_3/c_0) r^d, \quad x\in \MM,\; 0<r\le 1.
\end{equation}

To compare the volumes of balls with different centers $x, y\in \MM$ and the same radius $r$
we will use the inequality
\begin{equation}\label{D2}
|B(x, r)| \le c_0\Big(1+ \frac{\rho(x,y)}{r}\Big)^d  |B(y, r)|,
\quad x, y\in \MM, \; r>0.
\end{equation}
As $B(x,r) \subset B\big(y, \rho(y,x) +r\big)$ the above inequality is immediate from (\ref{doubling}).

\smallskip

We will also need the following simple inequality (see \cite[Lemma 2.3]{CKP}):
If $\tau >d$, then for any $\delta>0$
\begin{equation}\label{tech-1}
\int_{\MM} \big(1+\delta^{-1}\rho(x, y)\big)^{-\tau} d\mu(y) \le c|B(x, \delta)|, \quad x\in \MM,
\end{equation}
where $c=(2^{-d}-2^{-\tau})^{-1}$.

\subsection{Spectral spaces}\label{subsec:spectral-sp}

We recall the definition of the spectral spaces $\Sigma_\lambda^p$, $1\le p\le \infty$, from \cite{CKP}.
Denote by $C^\infty_0(\R)$ the set of all even real-valued compactly supported functions.
We define
\begin{equation*} 
\Sigma_\lambda^p
:=\big\{f\in \bL^p(\MM): \theta(\sqrt{L})f=f \; \hbox{for all}\; \theta\in C^\infty_0(\R),\; \theta\equiv 1 \;\hbox{on}\; [0, \lambda] \big\},
\;\; \lambda>0.
\end{equation*}

We will need the following proposition (Nikolski type inequality):

\begin{proposition}\label{prop:nikolski}
Let $1\le p\le q\le\infty$. If $g\in \Sigma_\lambda^p$, $\lambda\ge 1$, then $g\in \Sigma_\lambda^q$ and
\begin{equation}\label{nikolski}
\|g\|_q \le c_\star\lambda^{d(1/p-1/q)}\|g\|_p,
\end{equation}
where the constant $c_\star>1$ is independent of $p$ and $q$.
\end{proposition}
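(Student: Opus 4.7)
The plan is to represent $g$ via a smooth spectral cutoff at scale $\lambda$, control its pointwise values by a H\"older argument based on the kernel localization from Theorem~\ref{thm:S-local-kernels}, and then interpolate between $\bL^p$ and $\bL^\infty$.

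First, fix once and for all an even $\varphi\in C^\infty(\bR)$ with $\varphi\equiv 1$ on $[-1,1]$ and $\supp\varphi\subset[-2,2]$. Set $\delta:=1/\lambda\le 1$ and $\theta(u):=\varphi(\delta u)$. Then $\theta\equiv 1$ on $[0,\lambda]$, so the defining property of $\Sigma_\lambda^p$ gives
\[
g=\theta(\sqrt L)g=\varphi(\delta\sqrt L)g.
\]
By Theorem~\ref{thm:S-local-kernels} applied with $\mm$ large, $\varphi(\delta\sqrt L)$ is an integral operator with a real-valued symmetric kernel $K_\delta(x,y)$ satisfying
\[
|K_\delta(x,y)|\le c\,|B(x,\delta)|^{-1}\bigl(1+\delta^{-1}\rho(x,y)\bigr)^{-\tau}
\]
for a prescribed $\tau>d$ (chosen once and for all).

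Next, I estimate the $\bL^{p'}$ norm of $K_\delta(x,\cdot)$, where $p'$ is the H\"older conjugate of $p$. Using $\tau p'\ge\tau>d$ together with (\ref{tech-1}),
\[
\|K_\delta(x,\cdot)\|_{p'}^{p'}\le c^{p'}\,|B(x,\delta)|^{-p'}\int_\MM\bigl(1+\delta^{-1}\rho(x,y)\bigr)^{-\tau p'}d\mu(y)\le c\,|B(x,\delta)|^{1-p'},
\]
with a constant uniform in $p'\in[1,\infty]$, since the factor $(2^{-d}-2^{-\tau p'})^{-1}$ from (\ref{tech-1}) stays bounded when $\tau>d$ is fixed. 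Hence $\|K_\delta(x,\cdot)\|_{p'}\le c\,|B(x,\delta)|^{-1/p}$, and by (\ref{DNC}) $|B(x,\delta)|^{-1/p}\le c\,\delta^{-d/p}=c\,\lambda^{d/p}$ uniformly in $x\in\MM$. Applying H\"older's inequality to $g(x)=\int_\MM K_\delta(x,y)g(y)d\mu(y)$ then yields
\[
\|g\|_\infty\le c_\star\,\lambda^{d/p}\|g\|_p,
\]
with $c_\star$ independent of $p$, which handles the endpoint $q=\infty$.

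Finally, for $p\le q<\infty$ the elementary inequality $|g|^q\le|g|^p\,\|g\|_\infty^{q-p}$ integrates to $\|g\|_q\le\|g\|_p^{p/q}\|g\|_\infty^{1-p/q}$; combined with the previous display this gives
\[
\|g\|_q\le c_\star^{1-p/q}\,\lambda^{d(1/p-1/q)}\|g\|_p,
\]
as required. The inclusion $g\in\Sigma_\lambda^q$ is then automatic: for any admissible $\theta'$, the identity $\theta'(\sqrt L)g=g$ is already a pointwise a.e.\ equality (via the same kernel representation applied to $\theta'$), so once $g\in\bL^q$ it persists as an equality in $\bL^q$. The only real technical point is the uniformity in $p'$ of the constant in $\|K_\delta(x,\cdot)\|_{p'}$, which is secured by freezing $\tau>d$ before varying $p$; no serious obstacle arises.
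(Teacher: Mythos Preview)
Your proof is correct and follows essentially the same approach as the paper's: both use the reproducing identity $g=\varphi(\delta\sqrt L)g$ with a fixed cutoff $\varphi$, invoke Theorem~\ref{thm:S-local-kernels} for kernel localization, and then bound an $\bL^r$-type norm of the kernel uniformly in the exponent. The only minor difference is that the paper goes directly from $\bL^p$ to $\bL^q$ via a Young/Schur-type inequality (Folland, Theorem~6.36) with the exponent $r$ determined by $1/p-1/q=1-1/r$, whereas you first establish the endpoint $\bL^p\to\bL^\infty$ by H\"older and then interpolate; both routes are standard and yield the same $p,q$-independent constant.
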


This proposition was established in \cite[Proposition 3.12]{CKP} (see also \cite[Proposition 3.11]{KP}).
We present its proof in the appendix because we need to control the constant $c_\star$.

\subsection{Wavelets}\label{Frames}

In the setting of this article, wavelet type frames for Besov and Triebel-Lizorkin spaces are developed in \cite{KP}.
Here, we review the construction of the frames from \cite{KP} and their basic properties.
Indeed, in this setting the 'wavelets' do not form an orthonormal basis but a frame.
In this case, the construction of a 'dual wavelet system' is necessary to get a representation of type \eref{frame2}.

\smallskip

This construction is inspired by to the Littlewood-Paley  construction of the standard wavelets
introduced by \cite{F-J1},\cite{F-J2}, \cite{F-J3}.

The construction of frames involves a ``dilation" constant $b>1$ whose role is played by $2$ in the wavelet theory on $\R$.

The construction starts with the selection of a function $\Psi_0\in C^{\infty}(\bR_+)$
with the properties:
$\Psi_0(\lambda)=1$ for $\lambda\in[0,1]$,
$0\le\Psi_0(\lambda)\le1$,
and $\supp\Psi_0\subset[0,b]$.
Denote $\Psi(\lambda):=\Psi_0(\lambda)-\Psi_0(b\lambda)$
and set $\Psi_{j}(\lambda):=\Psi(b^{-j}\lambda)$, $j\in\bN$.
From this it readily follows that
\begin{equation}\label{PsiJ}
\sum\limits_{j=0}^{J}\Psi_j(\lambda)=\Psi_{0}(b^{-J}\lambda),
\quad\lambda\in\bR_+.
\end{equation}
For $j\ge 0$ we let $\cX_j \subset \MM$ be a maximal $\delta_j-$net on $\MM$
with $\delta_j:=c_6 b^{-j}$.
It is easy to see that for any $j\ge 0$ there exists a disjoint partition
$\{A_{j\xi}\}_{\xi\in\cX_j}$ of $\MM$
consisting of measurable sets such that
$$
B(\xi, \delta_j/2) \subset  A_{j\xi} \subset B(\xi,\delta_j),
\quad \xi \in \cX_j.
$$
Here $c_6>0$ is a sufficiently small constant (see \cite{KP}).

\begin{lemma}\label{lem:card-Xj}
If $\MM$ is compact, then there exists a constant $c_7>0$ such that
\begin{equation}\label{cardXj}
\card (\cX_j\big)\le c_7 b^{jd},
\quad j\ge0.
\end{equation}
\end{lemma}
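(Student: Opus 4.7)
The plan is to combine the disjointness of the half-radius balls around net points with the doubling condition applied to the (finite) diameter of $\MM$.

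First I would observe that, by maximality of $\cX_j$ as a $\delta_j$-net, any two distinct points $\xi,\eta\in\cX_j$ satisfy $\rho(\xi,\eta)\ge\delta_j$; otherwise $\eta$ could be added to the net. Consequently the balls $\{B(\xi,\delta_j/2):\xi\in\cX_j\}$ are pairwise disjoint, which gives
\begin{equation*}
\sum_{\xi\in\cX_j}|B(\xi,\delta_j/2)|\;\le\;|\MM|<\infty,
\end{equation*}
using compactness of $\MM$ for the finiteness on the right.

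Next I would produce a uniform lower bound for $|B(\xi,\delta_j/2)|$ in terms of $|\MM|$ via the doubling condition \eqref{doubling}. Set $D:=\diam(\MM)<\infty$. Since $\MM\subset B(\xi,D)$ for every $\xi\in\MM$, applying \eqref{doubling} with $r=\delta_j/2$ and $\lambda=2D/\delta_j$ (which exceeds $1$ once $\delta_j<2D$, i.e.\ as soon as $j$ is large enough that $c_6 b^{-j}<2D$) yields
\begin{equation*}
|\MM|\;\le\;|B(\xi,D)|\;\le\;c_0'\bigl(2D/\delta_j\bigr)^{d}\,|B(\xi,\delta_j/2)|.
\end{equation*}
Substituting this bound into the previous display and using $\delta_j=c_6 b^{-j}$ gives
\begin{equation*}
\card(\cX_j)\;\le\;c_0'\bigl(2D/\delta_j\bigr)^{d}=c_0'(2D/c_6)^{d}\,b^{jd},
\end{equation*}
which is the required estimate with $c_7=c_0'(2D/c_6)^d$ for all such $j$.

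Finally I would handle the remaining finitely many small values of $j$ (those with $c_6 b^{-j}\ge 2D$) by noting that a $\delta_j$-net in a compact space is necessarily finite, so $\card(\cX_j)$ is bounded there by a constant; enlarging $c_7$ if necessary absorbs these cases. There is no real obstacle here — the argument is just the standard metric-entropy estimate in a doubling space — but the one point worth being careful about is to verify the range of $\lambda$ in \eqref{doubling} before invoking it, which is why the argument is split into the two regimes above.
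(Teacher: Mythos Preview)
Your proof is correct and uses essentially the same ingredients as the paper's proof: disjointness of the half-radius balls around net points, together with the doubling condition applied to the finite diameter to get a uniform lower bound on $|B(\xi,\delta_j/2)|$ in terms of $|\MM|$. The paper packages the argument slightly differently, sandwiching $\int_\MM \tfrac{d\mu(x)}{\mu(B(x,\delta))}$ between constant multiples of $\card(\cX_\delta)$ (which also yields a matching lower bound on $\card(\cX_\delta)$), but the underlying idea is the same and your version is the more direct packing argument.
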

\begin{proof}
Assume $\MM$ is compact and
let $\cX_\delta$ be a maximal $\delta$-net on $\MM$, $\delta>0$.
Then
$$
\sum_{\xi\in \cX_\delta} \int_{B(\xi,\delta/2)} \frac{d\mu(x)}{\mu(B(x,\delta)}
\le \int_\MM \frac{d\mu(x)}{\mu(B(x,\delta))}
\le \sum_{\xi\in \cX_\delta} \int_{B(\xi,\delta)} \frac{d\mu(x)}{\mu(B(x,\delta))}.
$$
Therefore, using \eref{doubling} we get
\begin{equation}\label{card-Xdel}
\frac 1{c_0 4^d} \card(\cX_\delta)
\le \int_M\frac{d\mu(x)}{\mu(B(x,\delta)}
\le c_0 2^d \card(\cX_\delta).
\end{equation}
Since $\MM$ is compact we have $\mu(\MM)<\infty$ and
$B(x,D)=\MM$ for $x\in \MM$, where $D$ is the diameter of $\MM$, which is finite (see \cite{CKP}).
Using again \eref{doubling} we get
$\mu(M)= \mu(B(x,D))\le c_0(\frac D{\delta})^d \mu(B(x,\delta)$  for $x \in M$.
Hence
$$
\frac 1{c_0 4^d} \card(\cX_\delta)
\le \int_M\frac{d\mu(x)}{\mu(B(x,\delta))}
\le c_0(D/\delta)^d,
$$
which implies \eqref{cardXj}.
\end{proof}

The $j$th level frame elements $\psi_{j\xi}$ are defined by
\begin{equation}\label{def-frame}
\psi_{j\xi} (x):= |A_{j\xi} |^{1/2} \Psi_j(\sqrt L)(x,\xi),
\quad \xi \in \cX_j.
\end{equation}
We will also use the more compact notation
$\psi_{\xi}:=\psi_{j\xi}$ for $\xi\in\cX_j$.

Let
$\cX:=\cup_{j\ge 0}\cX_j$,
where equal points from different sets $\cX_j$ will be regarded as distinct elements of $\cX$,
so $\cX$ can be used as an index set.
Then $\{\psi_{\xi}\}_{\xi\in\cX}$ is Frame~$\# 1$.

The construction of a dual frame
$\{\tilde\psi_{\xi}\}_{\xi\in \cX}=\cup_j\{\tilde\psi_{j\xi}\}_{\xi\in\cX_j}$
is much more involved;
we refer the reader to \S 4.3 in \cite{KP} for the details.

By construction, the two frames satisfy
\begin{equation}\label{frame1}
\Psi_j(\sqrt{L})(x,y)=\sum\limits_{\xi\in\cX_j}\psi_{j\xi}(y)\tilde\psi_{j\xi}(x),
\quad j\ge0.
\end{equation}

A basic result from \cite{KP} asserts that
for any $f\in \bL^p(\MM, d\mu)$, $1\le p<\infty$,
\begin{equation}\label{frame-decom}
f=\sum_{j\ge 0}\Psi_j(\sqrt{L})f
\quad\hbox{(convergence in $\bL^p$)}
\end{equation}
and the same holds in $\bL^\infty$ if $f$ is uniformly continuous and bounded (UCB) on $\MM$.
As~a~consequence, for any $f\in \bL^p(\MM, d\mu)$, $1\le p\le\infty$, ($\bL^\infty={\rm UCB}$)
we have
\begin{equation}\label{frame2}
f=\sum\limits_{j=0}^{\infty}\sum\limits_{\xi\in\cX_j}\langle f,\tilde\psi_{j\xi}\rangle\psi_{j\xi}
\quad\hbox{(convergence in $\bL^p$).}
\end{equation}
Furthermore, frame decomposition results are established in \cite{KP} for Besov and Triebel-Lizorkin spaces
with full range of indices.

\medskip

\noindent
{\bf Properties of frames in the Ahlfors regularity case.}
We next present some properties of the frame elements in the case when condition {\bf C1A} is stipulated (see \cite{KP}).

\smallskip

1. {\em Localization}: For every $k\in\mathbb{N}$, there exists a constant $c(k)>0$ such that
\begin{equation}\label{bg1}
|\psi_{j\xi}(x)|,\;|\tilde\psi_{j\xi}(x)|\le c(k) b^{jd/2} \big(1+b^{j}\rho(x,\xi)\big)^{-k},
\quad x\in \MM.
\end{equation}

\smallskip

2. {\em Norm estimation}: For $1\le p\le \infty$
\begin{equation}\label{bg2}
\cd^{-1} b^{jd(\frac{1}{2}-\frac{1}{p})}
\le \|\psi_{j\xi}\|_p,\;\|\tilde\psi_{j\xi}\|_p
\le \cd b^{jd(\frac{1}{2}-\frac{1}{p})},
\quad \xi\in \cX_j,\;j\ge 0.
\end{equation}

\smallskip

3. For $1\le p\le \infty$
\begin{equation}\label{bg3}
\Big\|\sum_{\xi\in\cX_j} \lambda_{\xi}\psi_{j\xi}\Big\|_p\le \cd b^{jd(\frac{1}{2}-\frac{1}{p})}
\Big(\sum_{\xi\in\cX_j} |\lambda_{\xi}|^p\Big)^{1/p},
\quad j\ge 0,
\end{equation}
with the usual modification when $p=\infty$.
Above the constant $\cd>1$ depends only on $p$, $b$, $\Psi_0$, and the structural constants of the setting.

\subsection{Besov spaces}\label{subsec:Besov}

We will deal with probability density functions (pdf's)
in Besov spaces associated to the operator $L$ in our setting.
These spaces are developed in \cite{CKP,KP}.
Definition \ref{def:Besov} coincides  in $\bR^d$ with one the definitions of  usual Besov spaces  with $L$ replaced by  Laplacian ($-\Delta$ in fact to get a positive operator).

Here we present some basic facts about Besov spaces that will be needed later on.

Let $\Phi_0,\Phi\in C^{\infty}(\R_+)$ be real-valued functions satisfying the conditions:
\begin{equation}\label{Phi0}
\supp \Phi_0\subset [0,b],\;\Phi_0(\lambda)=1\;\text{for}\;\lambda\in[0,1],\;
\Phi_0(\lambda)\ge c>0\;\text{for}\;\lambda\in[0,b^{3/4}],
\end{equation}
\begin{equation}\label{Phi}
\supp \Phi\subset [b^{-1},b],\;
\Phi(\lambda)\geq c>0\;\text{for}\;\lambda\in[b^{-3/4},b^{3/4}].
\end{equation}
Set $\Phi_j(\lambda):=\Phi(b^{-j}\lambda),\;\text{for}\;j\geq 1$.

\begin{definition}\label{def:Besov}
Let $s>0$, $1\le p \le \infty$, and $0< q\le \infty$.
The \textit{Besov space} $B^s_{pq}=B^s_{pq}(\MM,L)$ is defined as the set of all functions $f\in \bL^p(\MM,\mu)$ such that
\begin{equation}\label{Besovdef}
\|f\|_{B^s_{pq}}:=\Big(\sum_{j\ge 0} \big(b^{sj}\|\Phi_j(\sqrt{L})f\|_p\big)^q\Big)^{1/q}<\infty,
\end{equation}
where the $\ell^q$-norm is replaced by the sup-norm if $q=\infty$.
\end{definition}
Note that as shown in \cite{KP} the above definition of the Besov spaces $B^s_{pq}$ is independent of
the particular choice of $\Phi_0,\Phi$ satisfying \eqref{Phi0}-\eqref{Phi}.
For example with $\Psi_j$ from the definition of the frame elements in \S\ref{Frames}
we have
\begin{equation}\label{Besov-frame}
\|f\|_{B^s_{pq}}\sim\Big(\sum_{j\ge 0} \big(b^{sj}\|\Psi_j(\sqrt{L})f\|_p\big)^q\Big)^{1/q}
\end{equation}
with the usual modification when $q=\infty$.
The following useful inequality follows readily from above
\begin{equation}\label{useful-Besov}
\|\Psi_j(\sqrt{L})f\|_p \le c b^{-sj}\|f\|_{B^s_{pq}},
\quad f\in B^s_{pq}, \; j\ge 0.
\end{equation}


As in $\R^d$, we will need some  embedding results  involving Besov spaces.
Recall the definition of embeddings:
Let $X$ and $Y$ be two (quasi-)normed spaces.
We say that $X$ is continuously embedded in $Y$ and write $X\hookrightarrow Y$
if $X\subset Y$ and for each $f\in X$ we have $\|f\|_Y\le c\|f\|_X$,
where $c>0$ is a constant independent of $f$.

\begin{proposition}\label{Pr:Embed}
$(i)$
If $1\le q\le r\le\infty$, $0<\tau\le\infty$, $s>0$, and $\mu(\MM)<\infty$,
then $B^s_{r\tau}\hookrightarrow B^s_{q\tau}$.

\smallskip

$(ii)$
If $1\le r\le q\le\infty$, $0<\tau\le\infty$ and $s>d\big(\frac{1}{r}-\frac{1}{q}\big)$,
then $B^s_{r\tau}\hookrightarrow B^{s-d(\frac{1}{r}-\frac{1}{q})}_{q\tau}$.

\smallskip

$(iii)$
If $1\le r\le\infty$, $0<\tau\le\infty$, and $s>d/r$, then $B^s_{r\tau}\hookrightarrow \bL^{\infty}$.

\smallskip

$(iv)$
If $1\le p \le r\le\infty$, $0<\tau\le\infty$, $s>0$, and $\mu(\MM)<\infty$, then $B^s_{r\tau}\hookrightarrow \bL^{p}$.
\end{proposition}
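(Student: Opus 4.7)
My plan is to establish the four embeddings in the order (ii), (i), (iii), (iv), since (ii) follows directly from the Nikolski inequality of Proposition~\ref{prop:nikolski}, item (i) is immediate from Hölder on a finite-measure space, and (iii)--(iv) are deduced from (ii) and (i) combined with the frame decomposition.

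The crux of (ii) is the observation that each Littlewood-Paley block $\Phi_j(\sqrt L)f$ belongs to the spectral space $\Sigma_{b^{j+1}}^r$. Indeed, since $\supp \Phi_j\subset[b^{j-1},b^{j+1}]$ for $j\ge 1$ (and $\supp\Phi_0\subset[0,b]$), any even $\theta\in C^\infty_0(\R)$ with $\theta\equiv 1$ on $[0,b^{j+1}]$ satisfies $\theta\Phi_j\equiv\Phi_j$ on $\R$, so $\theta(\sqrt L)\Phi_j(\sqrt L)f=\Phi_j(\sqrt L)f$. Proposition~\ref{prop:nikolski} applied with $\lambda=b^{j+1}\ge 1$ then yields
\[
\|\Phi_j(\sqrt L)f\|_q\le c_\star b^{(j+1)d(1/r-1/q)}\|\Phi_j(\sqrt L)f\|_r.
\]
Multiplying through by $b^{j(s-d(1/r-1/q))}$ and taking the $\ell^\tau$-norm in $j\ge 0$ gives (ii). Item (i) is even simpler: when $\mu(\MM)<\infty$ and $q\le r$, Hölder's inequality gives $\|g\|_q\le \mu(\MM)^{1/q-1/r}\|g\|_r$; apply this to $g=\Phi_j(\sqrt L)f$ and take the $\ell^\tau$-norm in $j$.

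For (iii) I would first apply (ii) with $q=\infty$ to obtain $B^s_{r\tau}\hookrightarrow B^{s-d/r}_{\infty\tau}$, which reduces the claim to showing $B^{s'}_{\infty\tau}\hookrightarrow\bL^\infty$ for arbitrary $s'>0$. Writing $f=\sum_{j\ge 0}\Psi_j(\sqrt L)f$ via the frame decomposition \eqref{frame-decom} and using that by \eqref{Besov-frame} the Besov norm is equivalent to $(\sum_j(b^{s'j}\|\Psi_j(\sqrt L)f\|_\infty)^\tau)^{1/\tau}$, we get
\[
\|f\|_\infty\le\sum_{j\ge 0}\|\Psi_j(\sqrt L)f\|_\infty=\sum_{j\ge 0}b^{-s'j}\bigl(b^{s'j}\|\Psi_j(\sqrt L)f\|_\infty\bigr),
\]
which is bounded by $c\|f\|_{B^{s'}_{\infty\tau}}$ by Hölder's inequality in $j$ (conjugate exponent $\tau'$) when $\tau>1$, and by the $\ell^\tau\hookrightarrow\ell^1$ embedding when $\tau\le 1$; in both cases $s'>0$ ensures the relevant geometric series converges. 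Item (iv) then follows by combining (i) applied with $q=p$ (yielding $B^s_{r\tau}\hookrightarrow B^s_{p\tau}$) with the identical frame-decomposition argument, with $\infty$ replaced by $p$ throughout (no UCB issue arises since \eqref{frame-decom} converges in $\bL^p$ for $1\le p<\infty$).

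The principal obstacle is the spectral-support step at the start of (ii): one must locate $\Phi_j(\sqrt L)f$ in the correct spectral space $\Sigma_{b^{j+1}}^r$ so that Nikolski's inequality delivers the sharp $b^{jd(1/r-1/q)}$ growth matched by the Besov exponent shift $s\mapsto s-d(1/r-1/q)$. Once this is in place, the remaining work is routine triangle-inequality and $\ell^\tau$-summation bookkeeping across the three regimes $\tau<1$, $1\le\tau<\infty$, $\tau=\infty$.
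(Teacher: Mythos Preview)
Your proof is correct and follows essentially the same approach as the paper: (i) via H\"older on a finite-measure space, (ii) via Nikolski's inequality applied to $\Phi_j(\sqrt L)f\in\Sigma_{b^{j+1}}$, and (iii)--(iv) via the Littlewood--Paley decomposition combined with the geometric summability coming from $s>d/r$ (resp.\ $s>0$). The only cosmetic difference is that in (iii) you factor through $B^{s-d/r}_{\infty\tau}$ using (ii) before summing, whereas the paper applies Nikolski directly to each block and sums in one step; the content is identical.
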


To streamline our presentation we defer the proof of this proposition to the appendix.

\medskip

\noindent
{\bf Besov spaces in the Ahlfors regularity case.}
For the development of adaptive density estimators in Section~\ref{Adaptive}
we will need some additional facts from the theory of Besov spaces when condition {\bf C1A} is assumed.
We first introduce the  Besov bodies. 

\begin{definition}
Assume $s>0$, $1\le p\le \infty$, $0< q\le \infty$,
and let $\cX:=\cup_{j\ge 0}\cX_j$ be from the definition of the frames in \S\ref{Frames}.
The  Besov body $\bb^s_{pq}=\bb^s_{pq}(\cX)$ is defined as the set of all
sequences $\{a_\xi\}_{\xi\in\cX}$ of real $($or complex$)$ numbers such that
\begin{equation}\label{Besovdiscrete}
\|a\|_{\bb^s_{pq}}
:=\Big(\sum_{j\ge 0}b^{jsq}
\Big(\sum_{\xi\in\cX_j}\big[b^{-jd(\frac{1}{p}-\frac{1}{2})}|a_{\xi}|\big]^{p}\Big)^{q/p}\Big)^{1/q}<\infty,
\end{equation}
where the $\ell^q$-norm is replaced by the sup-norm if $q=\infty$.
\end{definition}

One of the principle results in \cite{KP} asserts that the Besov spaces $B^s_{pq}$ can be completely characterized
in terms of the  Besov bodies $\bb^s_{pq}$ of the frame coefficients of the respective functions.
To be specific, denote
\begin{equation}\label{bjxi}
\beta_{j\xi}(f):=\langle f,\tilde\psi_{j\xi}\rangle,
\quad \xi\in\cX_j, \; j\geq 0.
\end{equation}
We will also use the more compact notation:
$\beta_{\xi}(f):=\beta_{j\xi}(f)$ for $\xi\in\cX_j$.
In the current setting, assume $s>0$, $1\le p\le \infty$, and $0< q\le \infty$.
In light of \cite[Theorem~6.10]{KP}
$f\in B^s_{pq}$ if and only if $\{\beta_{\xi}(f)\}\in \bb^s_{pq}$
with equivalent norms:
\begin{equation}\label{eqiv-B-b}
\|f\|_{B^s_{pq}}\sim\|\{\beta_{\xi}(f)\}\|_{\bb^s_{pq}}.
\end{equation}
This implies that
if $f\in B^s_{pq}$ for some $s>0$, $p\ge 1$, and $0<q\le \infty$, then
\begin{equation}\label{bg4}
\Big(\sum_{\xi\in\cX_j}|\beta_{j\xi}(f)|^p\Big)^{1/p}
\le c b^{-j(s+d(\frac{1}{2}-\frac{1}{p}))}\|f\|_{B^s_{pq}},
\quad j\ge 0,
\end{equation}
where $c=c(s,p,q)>0$.

\smallskip

By \eqref{bg3} and \eqref{bg4} it follows that, if $f\in B^s_{pq}$ for some $s>0$, $p\ge 1$, and $0<q\le \infty$, then
\begin{equation}\label{bg5}
\Big\|\sum_{\xi\in\cX_j} \beta_{j\xi}(f)\psi_{j\xi}\Big\|_p\le c b^{-sj}\|f\|_{B^s_{pq}},
\quad j\ge 0.
\end{equation}

\section{Kernel density estimators on the metric measure space $\MM$}\label{kdemetric}

Our goal in this section is to introduce and study kernel density estimators (kde's) on a metric measure space $(\MM, \rho, \mu)$
in the general setting described in Section~\ref{sec:setting}.
More precisely, in this section, we assume that conditions {\bf C1}--{\bf C5} are satisfied,
and do not necessarily  assume the Ahlfors regular volume condition {\bf C1A}.

To explain our construction of kernel estimators we begin by considering
the classical example of the periodic case on $\MM=[-1,1]$, presented in Subsection~\ref{torus}.
It will be commonly admitted among nonparametric statisticians to define an estimator of the form
$$
\hat f_T(x)=\frac 12 +\frac1n \sum_{i=1}^n \sum_{1\leq k\le T}  \cos k\pi(x-X_i).
$$
It falls into the category of orthogonal series estimator.
It is well known that these estimators have nice $\bL^2$ properties but can drastically fail
in $\bL^p$, $p\ne 2$, or locally.

In our setting we will replace $\hat f_T$ by a 'smoothed version':
\begin{align}
\hat K_{\delta}(x)
=\frac 12 K(0) +\frac1n \sum_{i=1}^n \sum_{k\ge 1} K(\delta k) \cos k\pi(x-X_i)
=:\frac1n\sum_{i=1}^n K_\delta(x,X_i),\label{kern-torus}
\end{align}
where $K$ is a smooth function $\bR_+\goto\bR$ eventually vanishing at infinity, and
\\
 $K_\delta(x,y)=\frac 12 K(0) + \sum_{k\ge 1} K(\delta k) \cos k\pi(x-y)
$.

\smallskip

In analogy to this case, replacing the circle by $\MM$ and the Laplacian by the operator $-L$
we can naturally introduce kde's on $\MM$ by means of  the machinery of spectral multipliers.

\smallskip

Let $K:\R_+\rightarrow\R$ be a bounded and measurable function.
As we already alluded to in \S \ref{subsec:func-calc},
if $E_\lambda$, $\lambda\ge 0$, is the spectral resolution associated with the operator $L$,
then the operator
\begin{equation}\label{Sm}
K(\sqrt{L}):=\int_{0}^{\infty}K(\sqrt{\lambda})dE_{\lambda}
\end{equation}
is well defined on $\bL^2(\MM)$.
Furthermore, the operator $K(\sqrt{L})$ is self-adjoint and bounded on $\bL^2(\MM)$
with norm
$
\|K(\sqrt{L})\|_{2\rightarrow2} :=\sup_{h,\|h\|_2\le 1}\|H(h)\|_{2}
\le \|K\|_{\infty}.
$

We are interested in multiplier operators $K(\sqrt{L})$ that are integral operators.
In~this case, since the function $K$ is real-valued its \textit{kernel} $K(\sqrt{L})(x,y)$ is real-valued and symmetric.
We will use Theorem \ref{thm:S-local-kernels} to define a family of multiplier operators
whose kernels are suitable for the construction of kernel density estimators on $\MM$.

We now introduce the kde's in the general setting of this article.

\begin{definition}\label{D:kdeM}
Let $X_1,\dots,X_n$ be i.i.d. random variables on $\MM$
in our setting.
Let $K(\delta\sqrt{L})(x, y)$ with $0<\delta\le1$ $($the bandwidth$)$
be the kernel of the integral operator $K(\delta\sqrt{L})$,
where $K:\R_+\rightarrow \R$.
The associated kernel density estimator is defined by
\begin{equation}\label{kde2}
\widehat{K}_\delta(x):=\frac{1}{n}\sum\limits_{i=1}^{n} K(\delta\sqrt{L})(X_i,x), \quad x\in \MM.
\end{equation}
\end{definition}

\begin{remarks}
\noindent
- The analogy with the Torus again explains why we take $\sqrt{L}$ instead of $L$: in the torus, case the eigenvalues are $\pi^2k^2$. It is mostly a comfort choice. Replacing $\sqrt{L}$ by $L$ is possible but would lead to a different regularity scale.
\\
- Again the analogy with the Torus case could lead to a choice of the form $K$ to be the indicator function of  the interval $[0,1]$  for instance. This choice would induce $\bL^2$ properties, but not  $\bL^p$ because this function is not smooth enough to get   the concentration inequalities of Theorem \ref{thm:S-local-kernels}.
\\
- If $K(\lambda)=e^{-\lambda^2}$,
then $K(\delta\sqrt{L})(x,y)=p_{\delta^2}(x,y)$ $($the ``heat kernel"$)$
can be used to define a kernel density estimator. This choice relies to  the Bayesian estimator provided in \cite{CaKPi}.
\\
- This type of kernel estimators although constructed upon orthogonal projectors, because of the smoothing function $K$ will finally have properties which are comparable to translation kernel estimators in $\bR^d$. In $\bR^d$ some properties such as a number of moment annulation (see for instance \cite{Tsybakov}) to get a correct biais are required which will be here replaced by the vanishing properties at infinity of the function $K$ and the smoothness of the function $K$.
\end{remarks}

\subsection{Upper bound estimates for kernel density estimators}

We will especially study kernel density estimators induced by compactly supported
$C^{\infty}$ multipliers, often called Littlewood-Paley functions.
In fact other type of kernels among the family of multipliers could lead to quite similar results. The Littlewood-Paley are especially powerful technically to obtain upper-bounds.
More explicitly,
let $\Phi$ be an even $C^\infty(\bR)$ real-valued function with the following properties:
\begin{equation}\label{prop-Phi}
\supp \Phi \subset [-1,1]
\quad\hbox{and}\quad
\Phi(\lambda)= 1 \;\;\hbox{for}\;\; \lambda\in [-1/2, 1/2].
\end{equation}
By Theorem~\ref{thm:S-local-kernels} it follows that $\Phi(\delta\sqrt{L})$
is an integral operator with well localized symmetric kernel $\Phi(\delta\sqrt{L})(x, y)$
and the Markov property:
\begin{equation}\label{Markov}
\int_{\MM}\Phi(\delta\sqrt{L})(x, y) d\mu(y)=1.
\end{equation}

As before we assume that
$X_1,\dots,X_n$ $(n\ge 2)$ are i.i.d. random variables with values on~$\MM$
and common probability density function (pdf) $f$ with respect to the measure $\mu$ on the space $\MM$.
Let $X_i\sim X$.
We will denote by $\bE=\bE_f$ the expectation with respect to
the probability measure $\bP=\bP_f$. 
We are interested in the kernel density estimator
\begin{equation}\label{def-Phi-kde}
\widehat{\Phi}_\delta(x)=\widehat{\Phi}_\delta(x, X_1,\dots,X_n)
:= \frac 1n \sum_{i=1}^n \Phi(\delta \sqrt L)(x,X_i),\;\forall\; x\; \in \MM.
\end{equation}

\subsection{Examples of Littlewood Paley kernel estimates}
Let us take some examples issued from section~\ref{typic-sm}. We already discussed the case $[-1,1]$ as a Torus in the introduction this section.
\\
- For $[-1,1]$ in the Jacobi framework, (section~\ref{jacobi}), we get the following estimator:
$$\widehat{\Phi}_\delta(x)=\frac 1n \sum_{i=1}^n\sum_{k}\Phi(\delta\sqrt{k(k+\alpha +\beta +1)})p_k^{\alpha, \beta}(x) p_k^{\alpha, \beta}(X_i),\; \forall\; x\in \;[-1,1]$$
where
 $ p_k^{\alpha, \beta}(x) $ is the normalized Jacobi polynomial
( $\int_{-1}^1 |p_k^{\alpha, \beta}(x) |^2 (1-x)^\alpha(1+x)^\beta dx =1$).
\\
- For the sphere, (section~\ref{sphere}), we get the following estimator
$$\widehat{\Phi}_\delta(x)=\frac 1n \sum_{i=1}^n\sum_{k}\Phi(\delta\sqrt{k(k+d-2)})L_k(\langle x,X_i\rangle_{\R^d}),\; \forall\; x\in \;\bS^{d-1}$$
where $$L_k (x)= \frac 1{|\bS^{d-1}|}\big(1+\frac{k}{\nu}\big)C_k^{\nu}(x), \; \nu=\frac{d-2}2$$
\\
- For $SU(2)$, (section~\ref{su2}), we get the following estimator,
$$\widehat{\Phi}_\delta(x)=\frac 1n \sum_{i=1}^n\sum_{k}\Phi(\delta\sqrt{k(k+2)})L_k\big(\frac 12 Tr [X_i x^*]\big),\; \forall\; x\in \;SU(2)$$
with $L_k (x)= \frac 1{|\bS^3|}(1+k)C_k^{1}(x)  $.

\subsection{Upper bound results}
We next study the approximation of pdf's $f$
by such kernel estimators.
We first establish oracle inequalities:

\begin{theorem}\label{thm:oracle-ineq}
Assume $1\le p\le \infty$
and let $\Phi$ be a~Littlewood-Paley function as above.
In the setting described above and with $\widehat{\Phi}_\delta$ from \eqref{def-Phi-kde} we have:

$(i)$ If $2 \le p <\infty$, then
\begin{equation*}
\bE \| \widehat{\Phi}_\delta-f \|_p
\le \frac{c(p)}{(n\delta^d)^{1-\frac 1p}} + \frac{c(p)}{( n\delta^d)^{\frac 12}}\| f\|_{\frac p2}^{\frac 12}
+ \|\Phi(\delta\sqrt{L})f-f \|_p,
\quad 0<\delta\le 1.
\end{equation*}

$(ii)$
If $1\le p <2$ and $\supp(f) \subset B(x_0,R)$ for some $x_0\in \MM$ and $R>0$, then
$$
\bE\|\widehat{\Phi}_\delta-f\|_p
\le \frac{c(p)}{( n\delta^d)^{\frac 12}}|B(x_0,R)|^{\frac 1p-\frac 12}
+ \|\Phi(\delta\sqrt{L})f-f \|_p,
\quad 0<\delta\le 1.
$$

$(iii)$
There exists a constant $c$ such that for any $q\ge 2$ and $0<\delta\le 1$ we have
$$
\bE\|\widehat{\Phi}_\delta-f\|_\infty
\le c\delta^{-\frac dq} \Big(\frac{q}{(n\delta^d)^{1-\frac 1q}}
+ \frac{q^{1/2}}{(n\delta^d)^{\frac 12} }\|f\|_{\infty}^{\frac 12-\frac 1q}\Big)
+ \|\Phi(\delta\sqrt{L})f-f \|_\infty.
$$

\end{theorem}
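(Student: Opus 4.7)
The plan is to decompose
$\bE\|\widehat{\Phi}_\delta - f\|_p \le \bE\|\widehat{\Phi}_\delta - \bE\widehat{\Phi}_\delta\|_p + \|\bE\widehat{\Phi}_\delta - f\|_p$.
Since the kernel $\Phi(\delta\sqrt L)(x,y)$ is real-valued and symmetric, Fubini gives $\bE\widehat{\Phi}_\delta(x) = (\Phi(\delta\sqrt L)f)(x)$, so the bias term coincides with the last summand in each of the three bounds, and only the stochastic term $g := \widehat{\Phi}_\delta - \bE\widehat{\Phi}_\delta = n^{-1}\sum_{i=1}^n Z_i$ needs attention, where $Z_i(x) := \Phi(\delta\sqrt L)(x,X_i) - \bE\Phi(\delta\sqrt L)(x,X_1)$ are i.i.d.\ and centred.

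For part $(i)$, I would apply Rosenthal's inequality pointwise in $x$,
$\bE|g(x)|^p \le c(p)\bigl[n^{1-p}\bE|Z_1(x)|^p + n^{-p/2}(\bE Z_1(x)^2)^{p/2}\bigr]$,
then integrate in $x$ and take a $p$-th root. Theorem~\ref{thm:S-local-kernels} together with the lower bound \eqref{DNC} yields $|\Phi(\delta\sqrt L)(x,y)| \le c\delta^{-d}(1+\delta^{-1}\rho(x,y))^{-\tau}$ for arbitrarily large $\tau$. A Fubini swap plus \eqref{tech-1} then gives $\iint |\Phi(\delta\sqrt L)(x,y)|^p f(y)\,d\mu(y)\,d\mu(x) \le c\delta^{-d(p-1)}$, producing the $(n\delta^d)^{-(1-1/p)}$ rate. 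For the variance term, the key estimate is
$\bE Z_1(x)^2 \le \int|\Phi(\delta\sqrt L)(x,y)|^2 f(y)\,d\mu(y) \le c\delta^{-d}(T_\delta f)(x)$,
where $T_\delta$ is the integral operator with kernel $c|B(x,\delta)|^{-1}(1+\delta^{-1}\rho(x,y))^{-2\tau}$. Using \eqref{tech-1} and \eqref{D2} one checks that $T_\delta$ is bounded on both $\bL^1$ and $\bL^\infty$ uniformly in $\delta$, hence on $\bL^{p/2}$ for $p\ge 2$ by interpolation, delivering the $\|f\|_{p/2}^{1/2}/(n\delta^d)^{1/2}$ contribution.

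For part $(ii)$, I would set $B := B(x_0,R+1)$ and split $\bE\|g\|_p \le \bE\|g\ONE_B\|_p + \bE\|g\ONE_{B^c}\|_p$. Hölder gives $\|g\ONE_B\|_p \le |B|^{1/p-1/2}\|g\|_2$, while $(\bE\|g\|_2^2)^{1/2}\le c(n\delta^d)^{-1/2}$ follows from the variance computation underlying part $(i)$ at $p=2$; doubling absorbs $|B|$ into $|B(x_0,R)|$ up to a constant. For $x\in B^c$ every $y\in\supp f$ satisfies $\rho(x,y)\ge 1$, so choosing $\tau$ large in Theorem~\ref{thm:S-local-kernels} makes $\bE\|g\ONE_{B^c}\|_p$ smaller than any prescribed power of $\delta$, and hence negligible against the stated main term.

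For part $(iii)$, I would exploit that $g\in\Sigma_{1/\delta}^q$ for every $q$, because $\Phi$ is supported in $[-1,1]$ and hence $\Phi(\delta\sqrt L)(\cdot,y)$ lies in $\Sigma_{1/\delta}$ as a function of its first argument. Proposition~\ref{prop:nikolski} then gives $\|g\|_\infty \le c_\star\delta^{-d/q}\|g\|_q$. Applying part $(i)$ with $p=q$, and using $\|f\|_{q/2}^{1/2} \le \|f\|_\infty^{1/2-1/q}\|f\|_1^{1/q} = \|f\|_\infty^{1/2-1/q}$ by Hölder, yields the stated bound provided the $q$-dependence of the constants is tracked. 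The main obstacle is precisely this tracking: one must invoke a sharp form of Rosenthal,
$\bE\bigl|\sum_i Z_i\bigr|^q \le (Cq)^q\sum_i\bE|Z_i|^q + (Cq)^{q/2}\bigl(\sum_i\bE Z_i^2\bigr)^{q/2}$,
so that the two terms in part $(i)$ pick up factors $q$ and $q^{1/2}$ respectively, matching the advertised form.
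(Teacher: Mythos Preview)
Your overall plan matches the paper's: bias--variance split, pointwise Rosenthal for part~(i), Nikolski plus Rosenthal with explicit $q$-dependent constants for part~(iii). In~(i) your variance estimate via $\bL^1$--$\bL^\infty$ boundedness of $T_\delta$ is exactly the Schur-lemma argument the paper packages as Lemma~\ref{lem:prep}, and in~(iii) the paper likewise tracks the Rosenthal constants explicitly and interpolates $\|f\|_{q/2}\le \|f\|_\infty^{1-2/q}$.

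Part~(ii), however, has two concrete gaps as written. First, with $B=B(x_0,R+1)$ the step ``doubling absorbs $|B|$ into $|B(x_0,R)|$'' fails for small $R$: doubling only gives $|B(x_0,R+1)|\le c_0'((R+1)/R)^d|B(x_0,R)|$, so your constant blows up as $R\to 0$, while the theorem demands $c(p)$ independent of $R$. The easy fix is $B=B(x_0,2R)$, for which $|B(x_0,2R)|\le c_0|B(x_0,R)|$ holds uniformly. Second, the tail estimate is too crude: from $\rho(x,y)\ge 1$ alone you obtain a uniform pointwise bound $|\Phi(\delta\sqrt L)(x,y)|\le c\delta^{\tau-d}$, but $\mu(B^c)$ may be infinite (nothing in the hypotheses forces $\MM$ compact), so $\bE\|g\ONE_{B^c}\|_p$ need not even be finite, let alone ``smaller than any power of $\delta$''. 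You must use the full decay in $x$: for $x\notin B(x_0,2R)$ and $y\in B(x_0,R)$ one has $\rho(x,y)>\rho(x,x_0)/2$, which together with \eqref{D2} yields $|\Phi(\delta\sqrt L)(x,y)|\le c|B(x_0,\delta)|^{-1}(1+\delta^{-1}\rho(x,x_0))^{-(k-d)}$, and then \eqref{tech-1} makes the $x$-integral converge. This is precisely the content of Lemma~\ref{lemmaforlinear} in the paper.

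Once these repairs are made, your route for (ii) still differs organizationally from the paper's: rather than splitting $\|g\|_p$ spatially and invoking a global $\bL^2$ bound, the paper applies pointwise Jensen (valid since $p<2$) to pass to the second moment, pulls out a factor $\delta^{-d}$ via the sup bound on the kernel, and integrates the $p/2$-th power of the remaining first-order kernel integral using Lemma~\ref{lemmaforlinear}. Your spatial-splitting idea is a legitimate alternative, but it does not sidestep the need for a careful far-field estimate of the type in that lemma.
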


We next estimate the rates of $\bL^p$-approximation of pdf's $f$ lying in Besov space balls
by kernel estimators.
Denote
\begin{equation}\label{def-Besov-ball-1}
B^s_{p\tau}(m):=\big\{f\;\text{is pdf}: \|f\|_{B^s_{p\tau}}\le m\big\}
\end{equation}
and
\begin{equation}\label{def-Besov-ball-2}
B^s_{p\tau}(m,x_0,R):=\big\{f\in B^s_{p\tau}(m): \supp f\subset B(x_0,R)\big\},
\quad x_0\in \MM, \; m,R>0.
\end{equation}

Here is our main result on the properties of these  estimators, for density functions in Besov spaces, when the risk and the regularity classes are defined with the same norm.

\begin{theorem}\label{thm:upper-bound}
Assume $s>0$, $1\le p\le \infty$, $0<\tau\le\infty$, $m>0$,
and let $\Phi$ be a~Littlewood-Paley function as above.
In the setting described above and with $\widehat{\Phi}_\delta$ from \eqref{def-Phi-kde} we have:

$(i)$ If $2\le p<\infty$ and $\delta = n^{-\frac 1{2s+d}}$, then 
\begin{equation}\label{kde-1}
\sup_{f\in B^s_{p\tau}(m)}\bE \|\widehat{\Phi}_\delta-f\|_p
\le c n^{-\frac{s}{2s+d}},
\end{equation}
where $c=c(p,s,m,\tau)>0$.

$(ii)$ If $1\le p <2$, $x_0\in \MM$, $R>0$, and  $\delta = n^{-\frac 1{2s+d}}$,
then 
\begin{equation}\label{kde-2}
\sup_{f\in B^s_{p\tau}(m,x_0,R)}\bE \|\widehat{\Phi}_\delta-f \|_p
\le cn^{-\frac{s}{2s+d}},
\end{equation}
where $c=c(p,s,m,\tau,x_0,R)>0$.

$(iii)$ 
If $\delta = \big(\frac{\log n}n\big)^{\frac 1{2s+d}}$,
then 
\begin{equation}\label{kde-3}
\sup_{f\in B^s_{\infty \tau}(m)}\bE \| \widehat{\Phi}_\delta-f \|_\infty
\le c \Big(\frac{\log n}{ n}\Big)^{\frac s{2s+d}},
\end{equation}
where $c=c(s,m,\tau)>0$.
\end{theorem}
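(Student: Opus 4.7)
The plan is to combine Theorem~\ref{thm:oracle-ineq} with the Besov-type bias estimate
\begin{equation*}
\|\Phi(\delta\sqrt{L})f - f\|_p \le c\delta^s\|f\|_{B^s_{p\tau}}, \qquad 0<\delta\le 1,
\end{equation*}
valid for every $f\in B^s_{p\tau}$, and then optimize the bandwidth $\delta$ to balance bias and variance.

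To establish the bias estimate, use the frame decomposition $f=\sum_{j\ge 0}\Psi_j(\sqrt{L})f$ from \eqref{frame-decom}. Since $\Phi\equiv 1$ on $[-1/2,1/2]$ and $\supp\Psi_j\subset[b^{j-1},b^{j+1}]$, one has $\Phi(\delta\sqrt{L})\Psi_j(\sqrt{L})=\Psi_j(\sqrt{L})$ for every $j$ satisfying $\delta b^{j+1}\le 1/2$. Letting $J_0$ denote the largest such $j$ (so $b^{-J_0}\sim\delta$), it follows that
\begin{equation*}
\Phi(\delta\sqrt{L})f-f=\sum_{j>J_0}\bigl(\Phi(\delta\sqrt{L})-I\bigr)\Psi_j(\sqrt{L})f.
\end{equation*}
By Theorem~\ref{thm:S-local-kernels} the kernel of $\Phi(\delta\sqrt{L})$ is well localized, and combining this with \eqref{tech-1} and \eqref{D2} yields (via a Schur/Young argument) that $\Phi(\delta\sqrt{L})$ is bounded on $\bL^p$ uniformly in $\delta$. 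Using this together with \eqref{useful-Besov} and summing the geometric tail $\sum_{j>J_0}b^{-sj}\sim\delta^s$ (with H\"older in the $\ell^\tau$-direction when $\tau\ge 1$, and the inequality $(\sum a_j)^\tau\le\sum a_j^\tau$ when $\tau<1$) gives the claimed bound. The same computation with $J_0=0$ produces the useful side estimate $\|f\|_p\le c(s,\tau)\|f\|_{B^s_{p\tau}}$.

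For part~(i), Theorem~\ref{thm:oracle-ineq}(i) with $\delta=n^{-1/(2s+d)}$ gives $n\delta^d=n^{2s/(2s+d)}$, so $(n\delta^d)^{-1/2}=\delta^s=n^{-s/(2s+d)}$, and $(n\delta^d)^{-(1-1/p)}$ is of smaller or equal order since $p\ge 2$ implies $1-1/p\ge 1/2$. The bias contributes $cm\delta^s$. To bound $\|f\|_{p/2}^{1/2}$ uniformly over the Besov ball, use $\|f\|_1=1$ ($f$ is a pdf) and log-convexity of $\bL^p$-norms: $\|f\|_{p/2}\le\|f\|_1^{1/(p-1)}\|f\|_p^{(p-2)/(p-1)}\le c(m,s,\tau)$ by the side estimate above. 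Part~(ii) follows at once from Theorem~\ref{thm:oracle-ineq}(ii) with the same $\delta$, since the only $f$-dependent factor is the explicit constant $|B(x_0,R)|^{1/p-1/2}$. For part~(iii), apply Theorem~\ref{thm:oracle-ineq}(iii) with $q=\log n$, so that $\delta^{-d/q}=O(1)$ for $\delta$ polynomially small in $n$; the choice $\delta=(\log n/n)^{1/(2s+d)}$ then makes both $q^{1/2}(n\delta^d)^{-1/2}$ and $\delta^s$ of order $(\log n/n)^{s/(2s+d)}$, while Proposition~\ref{Pr:Embed}(iii) (with $r=\infty$, valid for any $s>0$) controls $\|f\|_\infty\le cm$.

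The main obstacle is the bias estimate, and more specifically the uniform-in-$\delta$ $\bL^p$-boundedness of $\Phi(\delta\sqrt{L})$. This is where the interplay between the well-localized kernel from Theorem~\ref{thm:S-local-kernels}, the volume comparison \eqref{D2}, and the integral estimate \eqref{tech-1} is essential: it turns the frame/Besov characterization of $B^s_{p\tau}$ into the clean polynomial decay $\delta^s$, which is exactly what the bias/variance balance needs to produce the rates claimed.
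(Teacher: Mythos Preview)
Your proposal is correct and follows essentially the same route as the paper: derive Theorem~\ref{thm:upper-bound} from the oracle inequalities of Theorem~\ref{thm:oracle-ineq} by plugging in the bias estimate $\|\Phi(\delta\sqrt{L})f-f\|_p\le c\delta^s\|f\|_{B^s_{p\tau}}$ (Proposition~\ref{prop:approx} in the paper), bounding $\|f\|_{p/2}$ by interpolation between $\|f\|_1=1$ and $\|f\|_p\le c\|f\|_{B^s_{p\tau}}$, and taking $q=\log n$ in part~(iii). Your inline proof of the bias estimate via the frame decomposition and uniform $\bL^p$-boundedness of $\Phi(\delta\sqrt{L})$ is a slight simplification of the paper's appendix argument for Proposition~\ref{prop:approx}, but the mechanism (kernel localization plus Schur) is the same.
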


\begin{remarks}\label{minimaxity}
- Note that we do not claim  that the above rates are  necessarily minimax, although they show similarities with the results established in $\bR^d$.
The length of the paper does not allow to investigate the full lower bounds results. Let us just mention that if we add in this setting the Ahlfors condition {\bf C1A}, then  the lower bounds can be obtained in accordance with the upper bounds using a proof which is a direct adaptation of the one given in the case of the sphere in \cite{BKMP}. In the case where the Ahlfors condition is not valid, the problem is more complex since not only the regularity might be non-homogeneous  due to Besov conditions but also the dimension itself may vary spatially. In this case, the upper bounds might not be optimal. \\
- It is interesting to compare the obtained upper bounds for $\MM=[-1,1]$ in the different cases (Torus or Jacobi). In the Torus case, no surprise: the rate is   the usual one with dimension $d=1$. In the Jacobi case, the dimension is $d=1+(2\alpha+1)_+\vee(2\beta+1)_+$, which in particular in the case $\alpha=\beta=0$ (corresponding to $\mu$ the Lebesgue measure), gives a slower rate than the usual one. This is due to the fact that the 'boundaries' are playing a role so the spaces of approximation are not the same. The case $\alpha=\beta=-\frac12$, which was corresponding to perfect identification with the semi-circle (see Remark \ref{boundaries}) provides the expected rate with dimension $d=1$.
\end{remarks}

We next compile some additional facts we need about kernels in the setting of this article
and then carry out the proof of Theorems~\ref{thm:oracle-ineq} and \ref{thm:upper-bound}.

\subsection{Spectral multiplier integral operators}\label{sec:multipliers}

The operator $\Phi(\delta \sqrt{L})$ and its symmetric kernel $\Phi(\delta \sqrt L)(x,y)$ from above
have a number of useful properties that we describe and prove next.

\smallskip

(a) For any $k>d$ there exists a constant $c_k>0$ such that
\begin{equation}\label{PHI2}
|\Phi(\delta \sqrt L)(x,y)| \leq c(k)|B(x,\delta)|^{-1}\big(1+ \delta^{-1}\rho(x,y)\big)^{-k},
\quad x,y\in \MM, \; 0<\delta\le 1,
\end{equation}
where the constant $c(k)>0$ depends only on $k$, $\Phi$, and constant from the seting in Section~\ref{sec:setting}.
This inequality follows immediately from Theorem~\ref{thm:S-local-kernels}.

\smallskip

(b) For any $1\le p  \le \infty$
\begin{equation}\label{PHI1}
\|\Phi(\delta \sqrt L)(x,\cdot)\|_p \le c|B(x,\delta)|^{\frac 1p-1}
\le c_\star\delta^{-d(1-\frac 1p)},
\quad x\in \MM, \; 0<\delta\le 1,
\end{equation}
where the constant $c_\star>0$ is independent of $p$.
This estimate follows readily by \eqref{PHI2}, \eqref{tech-1}, and \eqref{DNC},
just as the estimates in \eqref{norm-r}.

\smallskip

(c) Let $X$ be a random variable on $\MM$ and $X \sim f(u) d\mu(u)$. Then
\begin{equation}\label{PHI4}
\bE \big(\Phi(\delta \sqrt L)(x,X)\big)
= \int_\MM \Phi(\delta \sqrt L)(x,u) f(u) d\mu(u)
= \Phi(\delta \sqrt L)f(x),
\quad x\in \MM.
\end{equation}
This is a well known property of expected values.

\smallskip

We next estimate the bias term of the risk.

\begin{proposition}\label{prop:approx}
Let $s>0$, $1\le p \le \infty$, $0<q\le\infty$.
If $f\in B^s_{pq}$, then $f\in \bL^p$ and
\begin{equation}\label{PHI3}
\|\Phi(\delta \sqrt L)f -f \|_p \le c\delta^s \|f\|_{B^s_{pq}},
\quad 0<\delta\le 1,
\end{equation}
where $c=c(s,p,q)>0$.
\end{proposition}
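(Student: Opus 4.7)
The plan is to exploit the Littlewood-Paley decomposition from the frame construction. Using \eqref{frame-decom}, we write
$$
f - \Phi(\delta\sqrt{L})f = \bigl(I - \Phi(\delta\sqrt{L})\bigr)\sum_{j\ge 0}\Psi_j(\sqrt{L})f = \sum_{j\ge 0}\bigl(I - \Phi(\delta\sqrt{L})\bigr)\Psi_j(\sqrt{L})f,
$$
with convergence in $\bL^p$. The key localization observation is that, by the spectral calculus, the function $\lambda\mapsto (1-\Phi(\delta\lambda))\Psi_j(\lambda)$ vanishes identically whenever $\Phi(\delta\lambda)\equiv 1$ on $\supp\Psi_j$. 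Since $\Phi\equiv 1$ on $[-1/2,1/2]$ and $\supp\Psi_j\subset[b^{j-1},b^{j+1}]$ for $j\ge 1$ (with $\supp\Psi_0\subset[0,b]$), the condition $\delta b^{j+1}\le 1/2$ suffices. Hence only the tail $j\ge J$ contributes, where $J:=\lceil \log_b(1/(2b\delta))\rceil$, so that $b^{-J}\sim\delta$.

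For each $j\ge J$, the triangle inequality and the uniform $\bL^p$-boundedness of $\Phi(\delta\sqrt{L})$ give
$$
\bigl\|\bigl(I - \Phi(\delta\sqrt{L})\bigr)\Psi_j(\sqrt{L})f\bigr\|_p \le \bigl(1+\|\Phi(\delta\sqrt{L})\|_{p\to p}\bigr)\,\|\Psi_j(\sqrt{L})f\|_p.
$$
The uniform-in-$\delta$ boundedness of $\Phi(\delta\sqrt{L})$ on $\bL^p$ is obtained from the kernel estimate \eqref{PHI2} via Schur's test: combining \eqref{PHI2} with \eqref{tech-1} yields the required bound on $\sup_x\int|\Phi(\delta\sqrt{L})(x,y)|d\mu(y)$, while the symmetric estimate on $\sup_y\int|\Phi(\delta\sqrt{L})(x,y)|d\mu(x)$ follows by first converting $|B(x,\delta)|^{-1}$ into $|B(y,\delta)|^{-1}$ via the volume comparison \eqref{D2} (absorbing the polynomial factor into the decay $(1+\delta^{-1}\rho(x,y))^{-k}$ for $k$ large enough), and then applying \eqref{tech-1} again.

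It then suffices to invoke \eqref{useful-Besov}, which gives $\|\Psi_j(\sqrt{L})f\|_p\le c b^{-sj}\|f\|_{B^s_{pq}}$ (valid for any $0<q\le\infty$ since each $\ell^q$-term is dominated by the full norm), and to sum a geometric series:
$$
\|\Phi(\delta\sqrt{L})f - f\|_p \le c\sum_{j\ge J} b^{-sj}\|f\|_{B^s_{pq}} \le c' b^{-sJ}\|f\|_{B^s_{pq}} \le c''\delta^s\|f\|_{B^s_{pq}}.
$$
That $f\in\bL^p$ is built into the definition \eqref{Besovdef}, so no separate argument is needed.

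The only non-routine point is the uniform $\bL^p$-boundedness of $\Phi(\delta\sqrt{L})$; once that is in hand (and it is a direct Schur-test consequence of \eqref{PHI2}, \eqref{D2}, and \eqref{tech-1}), the remainder of the argument is a clean telescoping across the Littlewood-Paley scales where spectral support forces all low-frequency contributions to vanish.
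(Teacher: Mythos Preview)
Your proof is correct and follows the same overall strategy as the paper's: decompose via a Littlewood--Paley partition, discard the low-frequency pieces by spectral support considerations, control the surviving high-frequency pieces by a uniform $\bL^p$ operator bound, and sum the geometric series using \eqref{useful-Besov}.

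The only real difference lies in how the ``transition'' scales (the few $j$ near $J$ where $(1-\Phi(\delta\cdot))\Psi_j$ is neither identically zero nor equal to $\Psi_j$) are handled. The paper packages $(1-\Phi(\delta\lambda))$ together with those $\bPhi_j$ into an auxiliary multiplier $\Lambda$ supported in a fixed interval, checks that the $C^k$ norms of $\Lambda$ are bounded uniformly in $\delta$ (because $\delta b^\nu\sim 1$), and then invokes Theorem~\ref{thm:S-local-kernels} and Schur's test to bound $\Lambda(b^{-\nu}\sqrt{L})$ on $\bL^p$. You instead bound each transition piece crudely by $(1+\|\Phi(\delta\sqrt{L})\|_{p\to p})\|\Psi_j(\sqrt{L})f\|_p$ and appeal directly to the uniform $\bL^p$-boundedness of $\Phi(\delta\sqrt{L})$, which follows immediately from \eqref{PHI1} with $p=1$ (and kernel symmetry) via Schur's test. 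Your route is shorter and avoids the auxiliary construction; the paper's route would be preferable only if one needed to track the dependence of the constant on $\Phi$ more carefully, which is not the case here. One small remark: since the kernel $\Phi(\delta\sqrt{L})(x,y)$ is symmetric, the second Schur bound is immediate and your detour through \eqref{D2} is unnecessary.
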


This statement is quite standard. For completeness we give its proof in the appendix.

\smallskip

We will also need the following two lemmas:

\begin{lemma}\label{lem:prep}
Let $2\le p<\infty$ and $0<\delta\le1$.
Then for any pdf $f$ on $\MM$ we have
\begin{equation}\label{prep-1}
\Big(\int_\MM\int_\MM|\Phi(\delta\sqrt{L})(x,u)|^p f(u)d\mu(u)d\mu(x) \Big)^{1/p}
\le c_\star\delta^{-d(1-1/p)}
\end{equation}
and
\begin{equation}\label{prep-2}
\Big(\int_\MM\Big(\int_\MM|\Phi(\delta\sqrt{L})(x,u)|^2 f(u)d\mu(u)\Big)^{p/2}d\mu(x) \Big)^{1/p}
\le c_\star\delta^{-d/2}\|f\|_{p/2}^{1/2},
\end{equation}
where $c_\star>0$ is the constant from \eqref{PHI1}; $c_\star$ is independent of $p$.
\end{lemma}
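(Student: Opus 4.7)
Both estimates reduce to the pointwise kernel bound \eqref{PHI1}, the symmetry $\Phi(\delta\sqrt L)(x,u)=\Phi(\delta\sqrt L)(u,x)$, and the normalization $\int_\MM f\,d\mu=1$.

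For part \eqref{prep-1}, apply Fubini--Tonelli (all integrands are nonnegative) to rewrite the double integral as
\begin{equation*}
\int_\MM f(u)\int_\MM |\Phi(\delta\sqrt L)(x,u)|^p\,d\mu(x)\,d\mu(u)
=\int_\MM f(u)\,\|\Phi(\delta\sqrt L)(u,\cdot)\|_p^p\,d\mu(u),
\end{equation*}
where the equality uses kernel symmetry. By \eqref{PHI1}, $\|\Phi(\delta\sqrt L)(u,\cdot)\|_p\le c_\star\delta^{-d(1-1/p)}$ uniformly in $u$, so the right-hand side is bounded by $(c_\star\delta^{-d(1-1/p)})^p\int_\MM f\,d\mu=(c_\star\delta^{-d(1-1/p)})^p$. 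Taking $p$th roots gives \eqref{prep-1}.

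For part \eqref{prep-2}, introduce the nonnegative symmetric kernel $K(x,u):=|\Phi(\delta\sqrt L)(x,u)|^2$ and view the inner integral as $Tf(x):=\int_\MM K(x,u)f(u)\,d\mu(u)$. By \eqref{PHI1} applied with $p=2$,
\begin{equation*}
\sup_{x\in\MM}\int_\MM K(x,u)\,d\mu(u)=\sup_{x\in\MM}\|\Phi(\delta\sqrt L)(x,\cdot)\|_2^2\le c_\star^2\delta^{-d},
\end{equation*}
and the same bound holds with the roles of $x$ and $u$ swapped, by symmetry. Denote this common bound by $A$. A standard Schur-type argument (write $K=K^{1/q'}K^{1/q}$ with $q=p/2\ge 1$, apply H\"older's inequality in $u$, raise to the $q$th power, integrate in $x$, and apply Fubini) then yields $\|Tf\|_{q}\le A\|f\|_{q}$, that is, $\|Tf\|_{p/2}\le c_\star^2\delta^{-d}\|f\|_{p/2}$. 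Taking square roots gives \eqref{prep-2} with the stated constant $c_\star$.

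\textbf{Expected difficulties.} There is essentially no substantive obstacle; the lemma is a clean consequence of \eqref{PHI1} and integration-by-parts-free manipulations. The only minor subtlety is bookkeeping of constants so that $c_\star$ in the conclusion matches the $c_\star$ of \eqref{PHI1} (which works out precisely because the second bound involves an $L^2$-norm squared, whose square root restores the first power of $c_\star$).
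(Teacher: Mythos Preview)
Your proof is correct and matches the paper's own argument essentially line for line: Fubini plus \eqref{PHI1} for \eqref{prep-1}, and Schur's lemma applied to the squared kernel $K(x,u)=|\Phi(\delta\sqrt L)(x,u)|^2$ (with the $\bL^1$ bound coming from \eqref{PHI1} at $p=2$) for \eqref{prep-2}. The only cosmetic difference is that the paper invokes Schur's lemma by citation rather than sketching the H\"older-then-Fubini argument as you do.
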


\begin{proof}
Denote by $S_1$ the quantity on the left-hand side in \eqref{prep-1}.
To estimate $S_1$ we use Fubini's theorem, (\ref{PHI1}), and the fact that $\int_\MM f(u) d\mu(u)=1$.
We obtain
\begin{equation}
\label{Ptl5}
S_1= \Big(\int_\MM\|\Phi(\delta\sqrt{L})(\cdot,u)\|_p^p f(u)d\mu(u)\Big)^{\frac 1p}
\le c_\star \delta^{-d(1-1/p)}.
\end{equation}
which confirms \eqref{prep-1}.

\smallskip

Let $S_2$ denote the quantity on the left-hand side in \eqref{prep-2} and
consider the integral operator $T$ with kernel
\begin{equation*}
T(x,y):=|\Phi(\delta\sqrt{L})(x,y)|^2.
\end{equation*}
By \eqref{PHI1} it follows that
$\|T(x,\cdot)\|_1=\|T(\cdot,y)\|_1\le c_\star^2\delta^{-d}$.
Therefore, by Schur's lemma (see e.g. \cite[Theorem 6.36]{Folland}) we obtain
\begin{equation}\label{Ptl6}
S_2^2 = \|Tf\|_{p/2} \le c_\star^2 \delta^{-d}\|f\|_{p/2}
\end{equation}
and inequality \eqref{prep-2} follows.
\end{proof}

\begin{lemma}\label{lemmaforlinear}
Let $1\le p<2$. Then there exists a constant $c=c(p)>0$ such that
for any $\delta>0$ and
any pdf $f$ supported in a ball $B(x_0,R)$ with $x_0\in \MM$ and $R\ge\delta/2$
we have
\begin{equation}\label{PHI5}
\Big(\int_\MM\Big(\int_\MM|\Phi(\delta\sqrt{L})(x,u)|f(u)d\mu(u)\Big)^{p/2}d\mu(x) \Big)^{\frac 1p}
\le c|B(x_0,R)|^{\frac{1}{p}-\frac{1}{2}}.
\end{equation}
\end{lemma}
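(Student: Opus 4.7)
The plan is to denote $G(x) := \int_\MM |\Phi(\delta\sqrt{L})(x,u)| f(u) d\mu(u)$ and bound $\int_\MM G(x)^{p/2} d\mu(x)$ by splitting $\MM$ into the near region $B(x_0,2R)$ and its complement. The main obstacle is that $p/2<1$, so $L^{p/2}$ is only a quasi-norm; I will convert this into a standard H\"older argument on the near region and use kernel decay on the far region.

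On the near region $B(x_0,2R)$, since $2/p > 1$ and $2/(2-p)$ are conjugate exponents, H\"older gives
\begin{equation*}
\int_{B(x_0,2R)} G(x)^{p/2} d\mu(x) \le \Big(\int_{B(x_0,2R)} G(x) d\mu(x)\Big)^{p/2} |B(x_0,2R)|^{1-p/2}.
\end{equation*}
By Fubini and \eqref{PHI1} with $p=1$, together with $\int f\,d\mu = 1$, we have $\int_\MM G\,d\mu \le c_\star$; combined with the doubling bound $|B(x_0,2R)|\le c|B(x_0,R)|$, this yields a bound of the correct form $c|B(x_0,R)|^{1-p/2}$.

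On the far region, for any $u\in B(x_0,R)$ and any $x$ with $\rho(x,x_0)\ge 2R$ we have $\rho(x,u)\ge \rho(x,x_0)/2$. Estimate \eqref{PHI2} (with $k$ to be chosen) combined with \eqref{D2} applied to swap the center of the ball in the volume factor gives
\begin{equation*}
|\Phi(\delta\sqrt{L})(x,u)| \le c\,|B(x_0,\delta)|^{-1}(1+\delta^{-1}\rho(x,x_0))^{d-k},
\end{equation*}
and since $\int f\,d\mu=1$ the same bound holds for $G(x)$. Choosing $k$ so large that $(k-d)p/2 > d$, and invoking \eqref{tech-1}, yields
\begin{equation*}
\int_{\MM\setminus B(x_0,2R)} G(x)^{p/2} d\mu(x) \le c\,|B(x_0,\delta)|^{-p/2}\cdot|B(x_0,\delta)| = c\,|B(x_0,\delta)|^{1-p/2}.
\end{equation*}

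Finally, the hypothesis $R\ge\delta/2$ combined with doubling gives $|B(x_0,\delta)|\le c|B(x_0,R)|$; since $1-p/2>0$, this upgrades the far-region bound to $c|B(x_0,R)|^{1-p/2}$ as well. Summing the two contributions and raising to the power $1/p$ yields the claim
\begin{equation*}
\Big(\int_\MM G(x)^{p/2} d\mu(x)\Big)^{1/p} \le c\,|B(x_0,R)|^{\frac{1}{p}-\frac{1}{2}}.
\end{equation*}
The only subtlety is ensuring the transition from $|B(x_0,\delta)|$ to $|B(x_0,R)|$ goes in the right direction, which is exactly where the hypothesis $R\ge\delta/2$ is used.
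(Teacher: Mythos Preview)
Your proof is correct and follows essentially the same approach as the paper: the same near/far splitting at radius $2R$, the same H\"older argument on $B(x_0,2R)$ combined with the $\bL^1$-bound \eqref{PHI1}, and the same kernel decay plus \eqref{D2} and \eqref{tech-1} on the complement, with the hypothesis $R\ge\delta/2$ used exactly where you indicate.
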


\begin{proof}
We split the region of integration $\MM$ into two: $B(x_0,2R)$ and $\MM\setminus B(x_0,2R)$.

Since $1\le p<2$, there exists $1<r<\infty$ such that $\frac{p}{2}+\frac{1}{r}=1$.
Applying H\"{o}lder's inequality we obtain
\begin{align*}
I&:=\int_{B(x_0,2R)}\Big(\int_{B(x_0,R)}|\Phi(\delta\sqrt{L})(x,u)|f(u)d\mu(u)\Big)^{p/2}d\mu(x)
\\
&\le\Big(\int_{B(x_0,2R)}\int_{B(x_0,R)}|\Phi(\delta\sqrt{L})(x,u)|f(u)d\mu(u)d\mu(x)\Big)^{p/2}|B(x_0,2R)|^{1/r}.
\end{align*}
We now use Fubini's theorem, (\ref{PHI1}) with $p=1$, the fact that $\int_\MM f(u)d\mu(u)=1$,
and the doubling property (\ref{doubl-0}) to obtain
\begin{equation}\label{LFL1}
I\le \Big(\int_{B(x_0,R)}\|\Phi(\delta\sqrt{L})(\cdot,u)\|_1f(u)d\mu(u)\Big)^{p/2}|B(x_0,2R)|^{1/r}
\le c|B(x_0,R)|^{1-\frac{p}{2}}.
\end{equation}

To estimate the integral over $\MM\setminus B(x_0,2R)$
we observe that if $u\in B(x_0,R)$ and $x\in \MM\setminus B(x_0,2R)$,
then $\rho(x,x_0)\le\rho(x,u)+\rho(u,x_0)<\rho(x,u)+R<2\rho(x,u)$.
Then by \eqref{D2} and \eqref{PHI2} we get
\begin{align*}
|\Phi(\delta\sqrt{L})(x,u)|&\le c|B(x,\delta)|^{-1}\big(1+\delta^{-1}\rho(x,u)\big)^{-k}
\\
&\le c|B(x_0,\delta)|^{-1}\big(1+\delta^{-1}\rho(x,x_0)\big)^{-k+d}.
\end{align*}
Choose $k>d(1+2/p)$. Then using (\ref{tech-1}) we obtain
\begin{align}\label{LFL2}
&\int_{\MM\setminus B(x_0,2R)}\Big(\int_{B(x_0,R)}|\Phi(\delta\sqrt{L})(x,u)|f(u)d\mu(u)\Big)^{p/2}d\mu(x)\nonumber
\\
&\le c|B(x_0,\delta)|^{-p/2}\int_\MM \big(1+\delta^{-1}\rho(x,x_0)\big)^{-(k-d)p/2}d\mu(x)\Big(\int_{\MM}f(u)d\mu(u)\Big)^{p/2}
\\
&\le c|B(x_0,\delta)|^{1-p/2}
\le c|B(x_0,2R)|^{1-p/2}
\le c'|B(x_0,R)|^{1-p/2},\nonumber
\end{align}
where we used that $R\ge\delta/2$, $p<2$, and $\int_{\MM}f(u)d\mu(u)=1$.

Inequality \eqref{PHI5} follows by \eqref{LFL1} and \eqref{LFL2}.
\end{proof}

\subsection{Proof of Theorem~\ref{thm:oracle-ineq} and Theorem~\ref{thm:upper-bound}}\label{sec:Proof-ker-est}

We will only prove Theorem~\ref{thm:oracle-ineq}. Theorem~\ref{thm:upper-bound} follows readily.

By the triangle inequality we obtain the standard decomposition of the risk as the sum of stochastic and bias terms:
\begin{equation}\label{Ptl1}
\bE \| \widehat{\Phi}_\delta-f \|_p \le \bE\|\widehat{\Phi}_\delta-\Phi(\delta\sqrt{L})f\|_p +\|\Phi(\delta\sqrt{L})f-f \|_p.
\end{equation}
For the estimation of the bias term $\|\Phi(\delta\sqrt{L})f-f \|_p$
we will use estimate \eqref{PHI3}.
We next focus on the estimation of the stochastic term $\bE\|\widehat{\Phi}_\delta-\Phi(\delta\sqrt{L})f\|_p$.
In the case $1 \le p <\infty$, using Jensen's inequality, we get
\begin{align}
\bE( \| \widehat{\Phi}_\delta-\Phi(\delta\sqrt{L})f \|_p) &\leq  \big(\bE \| \widehat{\Phi}_\delta-\Phi(\delta\sqrt{L})f  \|^p_p\big)^{\frac 1p}
\nonumber
\\
&=
\Big( \int_\MM \bE \Big|\frac 1n \sum_{i=1}^n \Phi(\delta \sqrt L)(x,X_i)-\Phi(\delta \sqrt L)f(x)\Big|^pd\mu(x)\Big)^{\frac 1p}.
\label{Ptl2}
\end{align}

(i) Assume the pdf $f\in B^s_{p\tau}(m)$ and let $X\sim X_i$.
We first prove estimate \eqref{kde-1} for $p=2$.
Clearly
\begin{align*}
\bE \Big|\frac 1n \sum_{i=1}^n \Phi(\delta \sqrt L)(x,X_i)-\Phi(\delta \sqrt L)f(x)\Big|^2
&\le \frac{1}{n}\bE[\Phi(\delta \sqrt L)(x,X)]^2
\\
&= \frac{1}{n}\int_\MM|\Phi(\delta\sqrt{L})(x,u)|^2f(u)d\mu(u).
\end{align*}
This coupled with (\ref{Ptl2}) yields
\begin{align}\label{Ptl3}
\bE\| \widehat{\Phi}_\delta &-\Phi(\delta\sqrt{L})f\|_2
\\
&\le \frac{1}{n^{1/2}}\Big(\int_\MM\int_\MM|\Phi(\delta\sqrt{L})(x,u)|^2f(u)d\mu(u)d\mu(x) \Big)^{\frac 12}
\le \frac{c}{(n\delta^d)^{1/2}}, \nonumber
\end{align}
where we used \eqref{prep-1} with $p=2$.
Combining (\ref{Ptl1}), (\ref{PHI3}), and (\ref{Ptl3}) we get
\begin{equation*}
\bE \| \widehat{\Phi}_\delta-f \|_2 \le \frac{c}{(n\delta^d)^{1/2}} + cm\delta^s.
\end{equation*}
With  $\delta=n^{-\frac{1}{2s+d}}$, i.e. $\delta^s=\frac{1}{(n\delta^d)^{1/2}}$,
this yields \eqref{kde-1} when $p=2$.

\smallskip

Let $2< p<\infty$. We will use the following version of Rosenthal's inequality that can be derived
for instance from \cite{HKPT}, p. 245, inequality (C.5) with $\tau= \frac P2+1 \leq p+1$:
If $Y_1,\dots,Y_n$ are i.i.d. random variables and $Y_i\sim Y$, then
\begin{equation}\label{Rosenthal}
\bE \Big|\frac 1n\sum_{i=1}^n Y_i -\bE Y \Big|^p
\le \frac{(p+1)^p}{n^{p-1}}\bE |Y|^p
+ \frac{p(p +1)^{p/2}e^{p/2+1}}{n^{p/2}}\big(\bE |Y|^2\big)^{p/2}.
\end{equation}


We get
\begin{align*}
\bE \Big|\frac 1n \sum_{i=1}^n \Phi(\delta \sqrt L)(x,X_i)&-\Phi(\delta \sqrt L)f(x)\Big|^p
\\
&\le \frac c{n^{p-1}} \bE\big|\Phi(\delta\sqrt{L})(x,X)\big|^p
+ \frac c{n^{p/2}} \Big(\bE\big|\Phi(\delta\sqrt{L})(x,X)\big|^2\Big)^{p/2}.
\end{align*}
This and \eqref{Ptl2} imply
\begin{align}\label{Ptl4}
\bE \| \widehat{\Phi}_\delta-\Phi(\delta\sqrt{L})f\|_p
&\le \frac c{n^{1-1/p}} \Big(\int_\MM\int_\MM|\Phi(\delta\sqrt{L})(x,u)|^p f(u)d\mu(u)d\mu(x) \Big)^{1/p} \nonumber
\\
&+ \frac c{n^{1/2}}\Big(\int_\MM\Big(\int_\MM|\Phi(\delta\sqrt{L})(x,u)|^2 f(u)d\mu(u)\Big)^{p/2}d\mu(x) \Big)^{1/p}
\\
&= \frac{c}{(n\delta^d)^{1/2}}+ c(n\delta^{d})^{-1}\|f\|_{p/2}, \nonumber
\end{align}
where we used \eqref{prep-1} and \eqref{prep-1}.
Since $1\le\frac{p}{2}<p$ and $\|f\|_1=1$, we obtain by interpolation
\begin{equation}\label{Ptl7}
\| f \|_{\frac p2} \le \|f \|_1^{\frac 1{p-1}} \| f \|_p^{\frac{p-2}{p-1}}
=\| f \|_p^{\frac{p-2}{p-1}}
\le c\|f\|_{B^s_{p\tau}}^{\frac{p-2}{p-1}}
\le cm^{\frac{p-2}{p-1}}.
\end{equation}
Here we also used Proposition \ref{Pr:Embed} (iv).

Combining \eqref{Ptl4}-\eqref{Ptl7} with \eqref{Ptl1} and \eqref{PHI3},
and taking into account that
$\delta=n^{-\frac{1}{2s+d}}$, i.e. $\delta^s=\frac{1}{(n\delta^d)^{1/2}}$
we arrive at
\begin{equation}\label{est-E1}
\bE \| \widehat{\Phi}_\delta-f \|_p \le \frac{c}{(n\delta^d)^{1/2}} + cm\delta^s
\le c'n^{-\frac{s}{2s+d}}.
\end{equation}
The proof of part (i) of the theorem is complete.

\medskip

(ii) Let $1\le p< 2$ and $f\in B^{s}_{p\tau}(m,x_0,R)$.
We use Jensen's inequality and the fact that
$|\Phi(\delta \sqrt L)(x,u)|\le c|B(x, \delta)|^{-1} \le c'\delta^{-d}$,
using (\ref{PHI2}) and (\ref{DNC}), to obtain
\begin{align*} 
\bE \Big|\frac 1n \sum_{i=1}^n \Phi(\delta \sqrt L)(x,X_i)&-\Phi(\delta \sqrt L)f(x)\Big|^p
\\
&\le \Big(\bE \Big|\frac 1n \sum_{i=1}^n \Phi(\delta \sqrt L)(x,X_i)-\Phi(\delta \sqrt L)f(x)\Big|^2\Big)^{\frac p2} 
\\
&\le\frac 1{ n^{\frac p2}} \big(\bE |\Phi(\delta \sqrt L)(x,X)|^2\big)^{\frac p2}
\\
&=\frac 1{ n^{\frac p2}} \Big(\int_\MM |\Phi(\delta \sqrt L)(x,u)|^2f(u)d\mu(u)\Big)^{\frac p2} 
\\
&\le \frac c{ (n\delta^d)^{\frac p2}} \Big(\int_\MM |\Phi(\delta \sqrt L)(x,u)|f(u)d\mu(u)\Big)^{\frac p2}. 
\end{align*}
This and \eqref{Ptl2} lead to
\begin{align*}
\bE \| \widehat{\Phi}_\delta-\Phi(\delta\sqrt{L})f\|_p
\le \frac c{(n\delta^d)^{\frac 12}}
\Big(\int_\MM\Big(\int_\MM |\Phi(\delta \sqrt L)(x,u)|f(u)d\mu(u)\Big)^{\frac p2}d\mu(x)\Big)^{\frac{1}{p}}.
\end{align*}
We now invoke Lemma \ref{lemmaforlinear} to obtain
\begin{align*}
\bE \| \widehat{\Phi}_\delta-\Phi(\delta\sqrt{L})f\|_p
\le \frac c{(n\delta^d)^{\frac 12}}|B(x_0, R)|^{\frac{1}{p}-\frac{1}{2}}
\le \frac{c'}{(n\delta^d)^{\frac 12}}.
\end{align*}
Using this and \eqref{PHI3} we complete the proof of \eqref{kde-2}
just as above in \eqref{est-E1}.

\medskip

(iii) Assume the pdf $f\in B^s_{\infty \tau}(m)$ and let  $q>2$ be arbitrary.
Since by construction $\supp \Phi\subset [-1,1]$,
the function $\widehat{\Phi}_\delta(x)-\Phi(\delta \sqrt L)f(x)$
belongs to the spectral space $\Sigma_{1/\delta}$.
Then by Proposition~\ref{prop:nikolski}
\begin{equation*}
\| \widehat{\Phi}_\delta-\Phi(\delta \sqrt L)f\|_\infty
\le c_\star \delta^{-\frac dq}\| \widehat{\Phi}_\delta-\Phi(\delta \sqrt L)f\|_q,
\end{equation*}
where the constant $c_\star>1$ is independent of $q$.
This along with Jensen's inequality and Fubini's theorem lead to
\begin{equation}\label{est-E-inf}
\bE \| \widehat{\Phi}_\delta-\Phi(\delta \sqrt L)f \|_\infty
\le c_\star\delta^{-\frac dq}\Big(\int_\MM \bE | \hat{\Phi}_\delta(x,\cdot)-\Phi(\delta \sqrt L)f(x)|^q d\mu(x)\Big)^{\frac 1q}.
\end{equation}
We now apply
 Rosenthal's inequality (\ref{Rosenthal}) to obtain
\begin{align*}
\bE | \widehat{\Phi}_\delta(x)&-\Phi(\delta \sqrt L)f(x)|^q
\\
&\le \frac{(q+1)^q}{n^{q-1}}\bE |\Phi(\delta\sqrt L)(x,X)|^q
+ \frac{q(q +1)^{\frac q2}e^{\frac q2 +1}}{n^{\frac q2}}
\big(\bE |\Phi(\delta \sqrt L)(x,X)|^2\big)^{\frac q2}
\\
&=\frac{(q+1)^q}{n^{q-1}}\int_\MM |\Phi(\delta \sqrt L)(x,u)|^q f(u)d\mu(u)
\\
&+ \frac{q(q +1)^{\frac q2}e^{\frac q2 +1}}{n^{\frac q2}}
\Big(\int_\MM|\Phi(\delta \sqrt L)(x,u)|^2 f(u)d\mu(u)  \Big)^{\frac q2}.
\end{align*}
This coupled with \eqref{est-E-inf} and the fact that $1/q<1$ imply
\begin{align}\label{infty}
\bE\| \hat{\Phi}_\delta &-\Phi(\delta\sqrt L)f\|_\infty
\nonumber
\\
&\le c_\star \delta^{-\frac dq} \frac{q+1}{n^{1-\frac{1}{q}}}
\Big(\int_\MM \int_\MM |\Phi(\delta \sqrt L)(x,u)|^q f(u)d\mu(u)d\mu(x)\Big)^{\frac 1q}
\nonumber\\
&+ c_\star \delta^{-\frac dq}\frac{q^{\frac{1}{q}}(q +1)^{\frac 12}e^{\frac 12 +\frac 1q}}{n^{\frac 12}}
\Big(\int_\MM \Big(\int_\MM|\Phi(\delta \sqrt L)(x,u)|^2 f(u)d\mu(u)  \Big)^{\frac q2}d\mu(x)\Big)^{\frac 1q}
\\
&\le c_\star \delta^{-\frac dq}\Big(\frac{2c_\star q}{(n\delta^d)^{1-\frac{1}{q}}}
+ \frac{e^2 c_\star q^{1/2}}{(n\delta^d)^{1/2}}\|f\|_{q/2}^{1/2}\Big),
\nonumber
\end{align}
where we used \eqref{prep-1}, \eqref{prep-2},
and the inequality
$q^{\frac{1}{q}}(q +1)^{\frac 12}e^{\frac 12 +\frac 1q} \le e^2q^{1/2}$, ($q>2$).
Observe that the constant $c_\star$ above is from \eqref{PHI1} and is independent of $q$.

By Proposition~\ref{Pr:Embed} (iii) it follows that  $f\in \bL^{\infty}$
and since $\|f\|_1=1$ we obtain
$$
\|f\|_{q/2}\le\|f\|_{\infty}^{1-2/q}\|f\|_1
\le \big(c\|f\|_{B^s_{\infty\tau}}\big)^{1-2/q}
\le (cm)^{1-2/q} \le cm+1.
$$

Let $n\ge e^2$ and choose $q:=\log n$. By assumption $\delta=\big(\frac{\log n}{n}\big)^{1/(2s+d)}$.
Now, it is easy to see that $n^{1/q}=e$, $\delta^{-d/q}\le e$,
$\delta^s=\frac{q^{1/2}}{(n\delta^d)^{1/2}}=\big(\frac{\log n}{n}\big)^{s/(2s+d)}$,
and
\begin{equation*}
\frac{q}{(n\delta^d)^{1-1/q}} \le \frac{q}{(n\delta^d)^{3/4}}
\le \frac{\log n}{n^{\frac{3s/2}{2s+d}}}
\le c\Big(\frac{\log n}{n}\Big)^{s/(2s+d)}
\quad\hbox{if}\quad
n\ge e^2.
\end{equation*}
Putting all of the above together we obtain
\begin{equation}\label{est-E-infty}
\bE\| \hat{\Phi}_\delta -\Phi(\delta\sqrt L)f\|_\infty
\le c \Big(\frac{\log n}{n}\Big)^{s/(2s+d)}.
\end{equation}

If $2\le n<e^2$, then estimate \eqref{est-E-infty} follows readily from (\ref{infty}) with $q=2$.

As before we use \eqref{est-E-infty} and \eqref{PHI3} to obtain \eqref{kde-3}.
The proof of Theorem~\ref{thm:upper-bound} is complete.

A closer examination of the above proof
shows that the oracle inequalities from Theorem~\ref{thm:oracle-ineq} are valid.
\qed

\subsection{Linear wavelet density estimators}\label{sec:linear-wavelet-est}

In this section we establish $\bL^p$-error estimates for linear wavelet density estimators.
Let $\{\psi_{j\xi}\}$, $\{\tilde{\psi}_{j\xi}\}$ be the pair of dual frames described in Subsection~\ref{Frames}.
We adhere to the notation from Section~\ref{Frames}.

For any $j\ge 0$ and $\xi\in\cX_j$ we define the \textit{empirical coefficient estimators} by
\begin{equation}\label{ee}
\hat{\beta}_{j\xi}:=\frac{1}{n}\sum_{i=1}^n \tilde{\psi}_{j\xi}(X_i).
\end{equation}
Using this we define the \textit{linear wavelet density estimator} by
\begin{equation}\label{flin}
f^*(x)=\sum_{j=0}^J\sum_{\xi\in\cX_j}\hat{\beta}_{j\xi}\psi_{j\xi}(x),\;\;x\in \MM,
\end{equation}
where 
the parameter $J=J(n)\in\mathbb{N}$ is selected so that the factor $b^{-J}$
de facto behaves as a bandwidth.
More precisely, we define $J$ as the unique positive integer such that
\begin{equation}\label{bJlin}
b^J\le n^{1/(2s+d)}<b^{J+1}.
\end{equation}
It is easy to see that $f^*$ can be written in the following way
\begin{align}\label{flin2}
f^*(x)&=\frac{1}{n}\sum\limits_{i=1}^n\sum\limits_{j=0}^J\sum_{\xi\in\cX_j}\psi_{j\xi}(x)\tilde{\psi}_{j\xi}(X_i)
\\
&=\frac{1}{n}\sum\limits_{i=1}^n\sum\limits_{j=0}^J\Psi_j(\sqrt{L})(X_i,x)
=\frac{1}{n}\sum\limits_{i=1}^n\Psi_0(b^{-J}\sqrt{L})(X_i,x). \nonumber
\end{align}
where we used \eqref{frame1} and \eqref{PsiJ}.

Thus, this linear wavelet estimator is in fact a particular case of
the linear estimators investigated in the previous subsection.
This enables us to state the following upper bound theorem,
which is an immediate consequence of Theorem~\ref{thm:upper-bound}.

\begin{theorem}\label{Th:Lin}
Let $s>0$, $0<\tau\le\infty$, $m>0$, $x_0\in \MM$ and $R>0$.

$(i)$ If $2\le p < \infty$ and $J$ is as in $(\ref{bJlin})$, then
\begin{equation}\label{UpL}
\sup_{f\in B^s_{p\tau}(m)} \bE \|f^*-f\|_p\le c n^{-s/(2s+d)},
\end{equation}
where $c=c(p,\tau,s,m)>0$.

$(ii)$ If $1\leq p <2$ and $J$ is as in $(\ref{bJlin})$, then
\begin{equation}\label{UpL2}
\sup_{f\in B^s_{p\tau}(m,x_0,R)} \bE \|f^*-f\|_p\le c n^{-s/(2s+d)},
\end{equation}
where $c=c(p,\tau,s,m,x_0,R)>0$.

$(iii)$ 
If $J$ is the unique integer satisfying
$b^J\leq \big(\frac n{\log n}\big)^{1/(2s+d)}<b^{J+1}$,
then
\begin{equation}\label{UpL3}
\sup_{f\in B^s_{\infty\tau}(m)}\bE \|f^*-f \|_\infty \le c \Big(\frac{\log n}{ n}\Big)^{\frac s{2s+d}},
\end{equation}
where $c=c(\tau,s,m)>0$.
\end{theorem}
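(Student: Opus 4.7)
The plan is to recognize that the theorem is essentially a corollary of Theorem~\ref{thm:upper-bound}, via the reformulation \eqref{flin2}. Indeed, the computation leading to \eqref{flin2} uses the dual frame reproducing identity \eqref{frame1} and the telescoping formula \eqref{PsiJ} to rewrite
$$
f^*(x) = \frac{1}{n}\sum_{i=1}^n \Psi_0(b^{-J}\sqrt{L})(X_i,x),
$$
which is precisely the kernel density estimator $\widehat{\Phi}_\delta$ from \eqref{def-Phi-kde} with the choices $\Phi := \Psi_0$ and $\delta := b^{-J}$. So the bulk of the work is to verify that $\Psi_0$ legitimately plays the role of the Littlewood-Paley function $\Phi$ used in Theorem~\ref{thm:upper-bound}, and then to read off the rates from the prescribed choice of $J$.

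For the first point, I would note that $\Psi_0 \in C^\infty(\bR_+)$ is compactly supported in $[0,b]$, satisfies $\Psi_0(0) = 1$ and $\Psi_0 \equiv 1$ on $[0,1]$. Extending $\Psi_0$ to an even function on $\bR$ (and, if desired, rescaling by the fixed constant $b$), we obtain a function satisfying the hypotheses \eqref{prop-Phi} up to harmless multiplicative constants in the support and plateau interval. The proof of Theorem~\ref{thm:upper-bound} never uses the precise values $1$ and $1/2$ beyond the facts that (a)~$\Psi_0(\delta\sqrt L)$ is an integral operator with well-localized kernel (guaranteed by Theorem~\ref{thm:S-local-kernels}), (b)~$\Psi_0(0)=1$ gives the Markov property \eqref{Markov} via \eqref{int-g}, and (c)~$\Psi_0 \equiv 1$ in a neighbourhood of the origin makes Proposition~\ref{prop:approx} applicable, yielding the bias bound $\|\Psi_0(\delta\sqrt L)f-f\|_p \le c\delta^s\|f\|_{B^s_{p\tau}}$. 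All three items transfer to $\Psi_0$ directly, so Theorem~\ref{thm:upper-bound} applies verbatim with $\Phi = \Psi_0$.

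For the second point, in parts (i) and (ii) the choice \eqref{bJlin} ensures $b^{-(J+1)} < n^{-1/(2s+d)} \le b^{-J}$, hence $\delta = b^{-J} \sim n^{-1/(2s+d)}$ up to the fixed constant $b$. Plugging $\delta$ into parts (i) and (ii) of Theorem~\ref{thm:upper-bound} delivers \eqref{UpL} and \eqref{UpL2} (the support condition in (ii) is exactly the one imposed on $f$). Similarly, the $J$ prescribed in part (iii) gives $\delta = b^{-J} \sim (\log n /n)^{1/(2s+d)}$, so \eqref{UpL3} follows at once from part (iii) of Theorem~\ref{thm:upper-bound}. There is no real obstacle in the argument; the only mildly delicate point is the cosmetic verification that the normalization of $\Psi_0$ is compatible with the normalization assumed for $\Phi$ in \eqref{prop-Phi}, which as noted only changes multiplicative constants and is absorbed into the constants $c(p,\tau,s,m)$, etc.
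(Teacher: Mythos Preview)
Your proposal is correct and matches the paper's approach exactly: the paper simply states that the theorem is an immediate consequence of Theorem~\ref{thm:upper-bound} via the identification \eqref{flin2}, and you have spelled out precisely the verification that makes this work (that $\Psi_0$ serves as a valid Littlewood--Paley function and that $\delta=b^{-J}$ is within a fixed factor of the bandwidth prescribed in Theorem~\ref{thm:upper-bound}).
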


\section{Adaptive wavelet density estimation by thresholding}\label{Adaptive}

If we want to parallel the achievements obtained in density estimation theory in -say- $[0,1]^d$,
one important feature is lacking: adaptation (i.e. obtaining -up to logarithmic factors-
optimal rates of convergence without knowing the regularity).
There are various techniques for this.
For instance, Lepski's method (see \cite{Lepski, Oleg}) could be applied to our kernel estimators.

We choose to develop here nonlinear wavelet estimators, where we apply \textit{hard thresholding}.
This method has been developed in the classical case of $\R$ in \cite{DJ} and on the sphere in \cite{HKPT}.
We will operate in the general setting described in Section~\ref{sec:setting}.
Unlike the case of the kernel or linear wavelet density estimates considered in the previous section,
here we assume that the space $\MM$ is compact ($\mu(\MM) <\infty$)
and all conditions {\bf C1}--{\bf C5}
(including the Ahlfors regularity condition {\bf C1A}) are satisfied, see Section~\ref{sec:setting}.

As before we assume that
$X_1,\dots,X_n$ ($n\ge 2$) are i.i.d. random variables with values on~$\MM$
and with a common density function $f$ with respect to the measure $\mu$ on $\MM$.
Let $X_j\sim X$.
We denote by $\bE=\bE_f$ the expectation with respect to
the probability measure $\bP=\bP_f$.

In addition, we assume that $f$ is bounded.
Denote
\begin{equation}\label{def-A}
A:=\max\big\{\|f\|_\infty,4\big\}
\quad\hbox{and set}\quad
\kappa:=\cd(8A)^{1/2},
\end{equation}
where $\cd>1$ is the constant from the norm bounds of the frame elements in \eqref{bg2}.

We will utilize the pair of frames $\{\psi_{j\xi}\}$, $\{\tilde{\psi}_{j\xi}\}$ described in \S\ref{Frames}.
We adhere to the notation from \S\ref{Frames}.
Recall that any $f\in\bL^p(\MM, d\mu)$ has the frame decomposition
\begin{equation}\label{frame-decomp}
f=\sum\limits_{j=0}^{\infty}\sum_{\xi\in\cX_j}\beta_{j\xi}(f)\psi_{j\xi},
\quad
\beta_{j\xi}(f):=\langle f,\tilde\psi_{j\xi}\rangle
\quad\hbox{(convergence in $\bL^p$).}
\end{equation}
Assuming the pdf $f$ fixed, we will use the abbreviated notation $\beta_{j\xi}:=\beta_{j\xi}(f)$.

We introduce two parameters depending on $n$:
\begin{equation}\label{tn}
\lambda_n:=\kappa\Big(\frac{\log n}{n}\Big)^{1/2}
\end{equation}
and $J_n$ uniquely defined by the following inequalities
\begin{equation}\label{bJ}
b^{J_n}\leq \big(\frac n{\log n}\big)^{1/d}<b^{J_n+1}.
\end{equation}

As in \S\ref{sec:linear-wavelet-est} we introduce the \textit{empirical coefficient estimators}
\begin{equation}\label{lin-coef-est}
\hat{\beta}_{j\xi}:=\frac{1}{n}\sum_{i=1}^n \tilde{\psi}_{j\xi}(X_i),
\quad j\ge 0, \; \xi\in\cX_j.
\end{equation}
We now define the \textit{hard threshold coefficient estimators} by
\begin{equation}\label{hte}
\hat{\beta}_{j\xi}^*:=\hat{\beta}_{j\xi}\ONE_{\{|\hat{\beta}_{j\xi}|>2\lambda_n\}},
\quad j\ge 0, \; \xi\in\cX_j.
\end{equation}
Then the \textit{wavelet threshold density estimator} is defined by
\begin{equation}\label{WDE}
\hat{f}_n(x):=\sum_{0\le j \le J_n}\sum_{\xi\in\cX_j}\hat{\beta}_{j\xi}^*\psi_{j\xi}(x),
\quad x\in\MM.
\end{equation}

\begin{remark}
Note that the density estimator $\hat{f}_n$ of the pdf $f$ depends only on the number $n$ of observations,
the geometric constant $\cd$, and the $\bL^{\infty}$-norm of $f$.
\end{remark}

We now state our main result on the adaptive wavelet threshold estimator defined above.


\begin{theorem}\label{Tresh}
Let $1\le r\le\infty$, $0<\tau\le\infty$, $1\le p<\infty$, $s>d/r$, and $m>0$.
Then there exists a constant $c=c(r,\tau,p,s,m)>0$ such that
in the setting described above and with $\hat{f}_n$ from $\eqref{WDE}$ we have:

$(i)$
\begin{equation}\label{main-1}
\sup_{f\in B^s_{r\tau}(m)}\bE \|\hat{f}_n -f\|_\infty
\le c\Big(\frac{\log n}{n}\Big)^{\frac{s-\frac{d}{r}}{2[s-d(\frac{1}{r}-\frac{1}{2})]}}.
\end{equation}

$(ii)$ In the regular case $s \ge \frac{dp}{2}\big(\frac{1}{r}- \frac{1}{p}\big)$
\begin{equation}\label{main-2}
\sup_{f\in B^s_{r\tau}(m)}\bE\|\hat{f}_n -f\|_p
\le c \log n \Big(\frac{\log n}{n}\Big)^{\frac {s}{2s+d}}.
\end{equation}

$(iii)$ In the sparse case $s <\frac{dp}{2}\big(\frac{1}{r}- \frac{1}{p}\big)$
\begin{equation}\label{main-3}
\sup_{f\in B^s_{r\tau}(m)}\bE\|\hat{f}_n -f\|_p
\le c\log n\Big(\frac{\log n}{n}\Big)^{\frac{s-d(\frac{1}{r} -\frac{1}{p})}{2[s- d(\frac{1}{r} -\frac{1}{2})]}}.
\end{equation}

\end{theorem}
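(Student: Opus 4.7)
The plan is to follow the DJKP (Donoho--Johnstone--Kerkyacharian--Picard) scheme for hard-thresholded wavelet estimators, adapted to the frame setting of \S\ref{Frames}. Using \eqref{frame-decomp}, I first decompose
\begin{equation*}
\hat f_n - f = \sum_{j=0}^{J_n}\sum_{\xi\in\cX_j}(\hat\beta^*_{j\xi} - \beta_{j\xi})\psi_{j\xi} \;-\; \sum_{j>J_n}\sum_{\xi\in\cX_j}\beta_{j\xi}\psi_{j\xi} \;=:\; S_n - T_n.
\end{equation*}
The deterministic tail $T_n$ is controlled by first applying the Besov embedding Proposition~\ref{Pr:Embed}(ii) to reduce to the case $r\le p$, and then using \eqref{bg5} to get $\|T_n\|_p \le c\sum_{j>J_n} b^{-s'j}\|f\|_{B^s_{r\tau}}$ with $s' = s - d(1/r-1/p)_+$; by \eqref{bJ}, this sum is of strictly smaller order than the claimed rates in all three cases.

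For the stochastic term $S_n$ I apply the standard four-region split. Writing $A_{j\xi}=\{|\hat\beta_{j\xi}|>2\lambda_n\}$, decompose $\hat\beta^*_{j\xi}-\beta_{j\xi}$ into: the ``large--large'' piece $(\hat\beta_{j\xi}-\beta_{j\xi})\ONE_{A_{j\xi}\cap\{|\beta_{j\xi}|>\lambda_n/2\}}$, the ``false positive'' piece $(\hat\beta_{j\xi}-\beta_{j\xi})\ONE_{A_{j\xi}\cap\{|\beta_{j\xi}|\le \lambda_n/2\}}$, the ``false negative'' piece $-\beta_{j\xi}\ONE_{A^c_{j\xi}\cap\{|\beta_{j\xi}|>2\lambda_n\}}$, and the ``small--small'' piece $-\beta_{j\xi}\ONE_{A^c_{j\xi}\cap\{|\beta_{j\xi}|\le 2\lambda_n\}}$. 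The two cross terms are handled by Bernstein's inequality applied to $Z_i:=\tilde\psi_{j\xi}(X_i)-\bE\tilde\psi_{j\xi}(X)$: by \eqref{bg2} we have $\|Z_i\|_\infty\le 2\cd b^{jd/2}$ and $\mathrm{Var}(Z_i)\le \|f\|_\infty\|\tilde\psi_{j\xi}\|_2^2 \le A\cd^2$. Since $b^{jd/2}\le(n/\log n)^{1/2}$ for $j\le J_n$, the choice $\kappa=\cd(8A)^{1/2}$ in \eqref{tn} is exactly what is needed so that Bernstein yields $\bP(|\hat\beta_{j\xi}-\beta_{j\xi}|>\lambda_n/2)\le cn^{-\gamma}$ for $\gamma$ as large as desired. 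Combined with a $(2p)$-th moment Rosenthal bound \eqref{Rosenthal} via Cauchy--Schwarz, both cross terms contribute only a polynomial-in-$n^{-1}$ residual to the risk.

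The heart of the argument lies in the two diagonal terms. For the large--large term, Rosenthal \eqref{Rosenthal} gives $\bE|\hat\beta_{j\xi}-\beta_{j\xi}|^p \le cn^{-p/2} + cn^{1-p}b^{jd(p/2-1)}$, and then \eqref{bg3} converts the $\ell^p$-sum over $\xi$ into the $\bL^p$-norm of the level-$j$ error. For the small--small term I apply \eqref{bg3} directly to $\{\beta_{j\xi}\ONE_{|\beta_{j\xi}|\le 2\lambda_n}\}_{\xi}$. In both cases, the Besov-body characterization \eqref{eqiv-B-b} together with \eqref{bg4} controls the count of active coefficients at level $j$: either via the trivial bound $\card(\cX_j)\le c b^{jd}$ (dense regime) or via the weak-$\ell^r$ argument $\#\{\xi:|\beta_{j\xi}|>\lambda\}\le \lambda^{-r}\sum_\xi|\beta_{j\xi}|^r \le c\,\lambda^{-r} b^{-j(s-d/r+d/2)r}\|f\|_{B^s_{r\tau}}^r$ (sparse regime). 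Summing the resulting geometric series in $j\le J_n$ and splitting at the level where these two counts balance yields the three stated exponents; the transition $s=\tfrac{dp}{2}(1/r-1/p)$ is precisely where the dense and sparse counts cross. For the sup-norm statement (i) I combine the same machinery with the embedding $B^s_{r\tau}\hookrightarrow B^{s-d/r}_{\infty\tau}$ from Proposition~\ref{Pr:Embed}(iii), noting that $\tfrac{s-d/r}{2(s-d/r)+d} = \tfrac{s-d/r}{2[s-d(1/r-1/2)]}$.

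The main obstacle is the bookkeeping in the sparse case (iii). The $\bL^p$-bound of the small--small term requires the weak-$\ell^r$ inequality $\sum_\xi|\beta_{j\xi}|^p\ONE_{|\beta_{j\xi}|\le\lambda}\le \lambda^{p-r}\sum_\xi|\beta_{j\xi}|^r$, which must be interleaved with \eqref{bg3} and \eqref{bg4} and then summed separately over the ``inner'' levels $j\le j^*$ (using the weak-$\ell^r$ count) and the ``outer'' levels $j^*<j\le J_n$ (using the straight Besov-body bound via \eqref{bg5}); the split point $j^*$ is determined by the balance condition $\lambda_n b^{j^*d/2}\sim b^{-j^*(s-d/r+d/p)}$. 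The $\bL^\infty$ version of the same balance, combined with the embedding argument, gives (i); the dense case (ii) is the simpler instance where the inner-level count is always trivial.
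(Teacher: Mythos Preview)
Your scheme is the same DJKP four-region split the paper uses, and for parts (ii)--(iii) it will go through, but a few points need correcting.

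\textbf{False-negative threshold.} On $A^c_{j\xi}\cap\{|\beta_{j\xi}|>2\lambda_n\}$ you do \emph{not} get a large-deviation event: $|\hat\beta_{j\xi}|\le 2\lambda_n$ and $|\beta_{j\xi}|>2\lambda_n$ can both hold with $|\hat\beta_{j\xi}-\beta_{j\xi}|$ arbitrarily small. You need $\{|\beta_{j\xi}|>c\lambda_n\}$ with $c>2$ (the paper takes $c=3$), so that the event forces $|\hat\beta_{j\xi}-\beta_{j\xi}|>\lambda_n$ and Bernstein applies.

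\textbf{The exponent in Bernstein is fixed, not arbitrary.} With $\kappa=\cd(8A)^{1/2}$ one has $\kappa^2/(4\cd^2 A)=2$, so Bernstein at level $\lambda_n$ gives exactly $\bP(|\hat\beta_{j\xi}-\beta_{j\xi}|>\lambda_n)\le 2n^{-2}$, not $n^{-\gamma}$ for $\gamma$ ``as large as desired''. Fortunately $n^{-2}$ is all that is needed once you also control the Poisson part of Bernstein (the paper handles this via a separate integral estimate, Lemma~\ref{technical1}, splitting at $\mu_j=\tfrac32\cd A b^{-jd/2}$).

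\textbf{The genuine gap is in part (i).} For $p=\infty$ the large--large term requires a bound on
\[
\bE\Big[\sup_{\xi\in\cX_j}|\hat\beta_{j\xi}-\beta_{j\xi}|\Big],
\]
and neither individual Rosenthal moments nor the Besov embedding $B^s_{r\tau}\hookrightarrow B^{s-d/r}_{\infty\tau}$ supplies this. The embedding only controls the \emph{deterministic} coefficients $\beta_{j\xi}$, not the stochastic fluctuations; and bounding $\sup_\xi$ by $\sum_\xi$ costs a factor $\card(\cX_j)\sim b^{jd}$, which destroys the rate. What is actually needed is a maximal Bernstein inequality (the paper's Lemma~\ref{L1}): for a family $\cF$ of size $N$ with common variance and sup bounds,
\[
\bE\Big[\sup_{\phi\in\cF}\Big|\frac1n\sum_i\phi(X_i)-\bE\phi(X)\Big|\Big]\le c\,\sigma\Big(\frac{\log N}{n}\Big)^{1/2}+c\,M\,\frac{\log N}{n}.
\]
This is the missing ingredient; once you have it, the $p=\infty$ argument proceeds by restricting to the (finite) range of $j$ where $|\beta_{j\xi}|>\lambda_n$ is possible and summing the resulting geometric series.

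For (ii)--(iii), your weak-$\ell^r$ level-splitting at a balance point $j^*$ is a valid alternative to the paper's device, which instead introduces an auxiliary integrability index $q<p$ with $s'=d(p-q)/(2q)$ and invokes the Besov embedding $B^s_{r\tau}\hookrightarrow B^{s'}_{q\tau}$ to collapse the large--large and small--small terms into a single estimate \eqref{II,III}. The two routes are equivalent in effect; yours is closer to the original DJKP argument on $\bR$, the paper's is slightly cleaner in the frame setting because it avoids the explicit $j^*$ bookkeeping.
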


\begin{remark}
Several observations are in order:

$(a)$
The assumption $s>d/r$ leads to $\|f\|_{\infty}\le c\|f\|_{B^s_{r\tau}}\le cm$,
by Proposition~\ref{Pr:Embed}~$(iii)$. In addition in the sparse case it implies $p>2$.

$(b)
$ The geometry of the setting is represented by the dimension $d$.
Note that the exponents of $\frac{\log n}{n}$ are the same as in the case of the sphere \cite{BKMP}.

$(c)$
In the regular case (modulo the logarithmic terms) we have the same
rate of convergence $n^{-s/(2s+d)}$ as in the case of the linear wavelet estimator.

$(d)$ Just as in the case of kernel density estimators $($see Remark~\ref{minimaxity}$)$
we note that since we assume here all conditions {\bf C1-C5} $($including {\bf C1A}$)$
it would not be a problem to obtain lower bounds matching up to logarithmic terms
the rates established above by a direct adaptation of the proof of the lower bounds
in the case of the sphere from \cite{BKMP}.
\end{remark}

\subsection{Preparation for the proof of Theorem \ref{Tresh}}

We first recall the classical \textit{Bernstein inequality} (see e.g. \cite{Pollard}):
Let $Y_1,\dots,Y_n$ be independent random variables such that
$\bE Y_i=0$, $\bE Y_i^2 \le \sigma^2$, and $|Y_i|\le M$, $i=1,\dots,n$.
Then for any $v>0$
\begin{align}\label{Bernstein}
\bP\Big(\Big|\frac{1}{n}\sum_{i=1}^n Y_i\Big|\ge v\Big)
&\le 2\exp\Big(-\frac{nv^2}{2(\sigma^2+Mv/3)}\Big)
\le 2\exp\Big(-\frac{nv^2}{4\sigma^2} \wedge \frac{3nv}{4M}\Big)
\\
&= 2\exp\Big(-\frac{nv^2}{2\sigma^2}\Big) \ONE_{\{v\le \frac{ 3\sigma^2}{M}\}} +
 2\exp\Big(-\frac{3nv}{4M}\Big)\ONE_{\{v >\frac{ 3\sigma^2}{M}\}}. \notag
\end{align}

We next use  Rosenthal's inequalities \eqref{Rosenthal} to derive several
useful estimates in the current setting.
Clearly,
\begin{equation*}
\beta_{j\xi}:=\int_\MM\tilde{\psi}_{j\xi}(x)f(x) d\mu(x) = \bE (\tilde{\psi}_{j\xi}(X))
\end{equation*}
and using \eqref{bg2} we obtain
\begin{equation}\label{sigma}
\bE |\tilde{\psi}_{j\xi} (X) -\beta_{j\xi}|^2
\le \bE |\tilde{\psi}_{j\xi} (X)|^2
= \int_\MM f(x)|\tilde{\psi}_{j\xi}(x)|^2d\mu(x)
\le \cd^2A
\end{equation}
and
\begin{equation}\label{sup}
|\tilde{\psi}_{j\xi} (X) -\beta_{j\xi}|\leq 2 \|\tilde{\psi}_{j\xi} \|_\infty \leq 2 \cd b^{jd/2}.
\end{equation}
Also
\begin{equation*}
\bE|\tilde{\psi}_{j\xi} (X) -\beta_{j\xi}|^p \leq 2^p
\bE|\tilde{\psi}_{j\xi} (X)|^p=2^p\int_\MM f(x)|\tilde{\psi}_{j\xi} (x)|^p dx\mu(x)
\le (2\cd)^pA b^{jd(\frac p2-1)}.
\end{equation*}
Therefore, using \eqref{Bernstein}
\begin{equation}\label{Bern}
\bP \big(|\hat{\beta}_{j\xi} -\beta_{j\xi}|>\lambda\big)
\le 2 \exp{\Big(-\frac{n\lambda^2}{4\cd^2A} \wedge \frac{3n\lambda}{8\cd b^{j d/2}}\Big)}.
\end{equation}
From this with the notation
\begin{equation}\label{def-mj}
\mu_j := \frac{3}{2}\cd A b^{-j d/2}
\end{equation}
we obtain
\begin{equation}\label{Bern2}
\bP \big(|\hat{\beta}_{j\xi} -\beta_{j\xi}|>\lambda\big)
\le 2\exp\Big(- \frac{n\lambda^2}{4\cd^{2} A}\Big)
\quad\text{if}\quad 0\le \lambda \le \mu_j,
\end{equation}
and
\begin{equation}\label{Bern3}
\bP\big(|\hat{\beta}_{j\xi} -\beta_{j\xi}|>\lambda\big)
\leq 2 \exp\Big(- \frac{3n\lambda}{8\cd b^{j d/2}}\Big)
\quad\text{if}\quad \lambda \ge \mu_j.
\end{equation}
In particular, if $0\le j\le J_n$, then
\begin{equation}\label{Bern4}
\bP \big(|\hat{\beta}_{j\xi} -\beta_{j\xi}|>\lambda_n\big)
\le  \frac {2}{n^2}.
\end{equation}
Now, by (\ref{Rosenthal}), for any $p\ge2$ there exists $c=c(p)>0$ such that
\begin{equation}\label{Rosen1}
\bE |\hat{\beta}_{j\xi} -\beta_{j\xi}|^p
\le c\Big(\frac{\cd^2A}{n}\Big)^{p/2}\Big(1+2^p \Big(\frac{b^{jd}}{nA}\Big)^{p/2-1}\Big).
\end{equation}
Moreover, for any $j\ge0$ such that $b^{jd}\le nA$
\begin{equation}\label{Rosen3}
\bE|\hat{\beta}_{j\xi} -\beta_{j\xi}|^p
\le c\Big(\frac{\cd^2A}{n}\Big)^{p/2}.
\end{equation}
On the other hand, by Jensen inequality, for any $0<p\le2$
\begin{equation}\label{Rosen2}
\bE|\hat{\beta}_{j\xi} -\beta_{j\xi}|^p
\le \Big(\frac{cc_0^2A}{n}\Big)^{p/2}.
\end{equation}

The following two lemmas will be instrumental in the proof of Theorem~\ref{Tresh}.


\begin{lemma}\label{technical1}
For any $n\ge2$, $0\le j\le n$, and $\xi\in\cX_j$ we have
\begin{equation}\label{est-I-jxi}
I_{j\xi}:=\int_0^{\infty}
\bP\Big( |\hat{\beta}_{j\xi}- \beta_{j\xi}| \ONE_{\{|\hat{\beta}_{j\xi}- \beta_{j\xi}|>\lambda_n\} }>\lambda\Big) d\lambda
\le  cA^{1/2}\Big(\frac{\log n}{n}\Big)^{1/2}\frac 1{n^2}.
\end{equation}
where $c=c(\cd)>0$.
\end{lemma}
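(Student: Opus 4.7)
The plan is to rewrite $I_{j\xi}$ as an expectation using the layer-cake identity and then split at the threshold $\lambda_n$. Indeed,
$$I_{j\xi}=\bE\Big(|\hat{\beta}_{j\xi}-\beta_{j\xi}|\,\ONE_{\{|\hat{\beta}_{j\xi}-\beta_{j\xi}|>\lambda_n\}}\Big),$$
and since the event $\{Y>\lambda\}$ in the probability integrand coincides with $\{|\hat{\beta}_{j\xi}-\beta_{j\xi}|>\lambda_n\}$ for $\lambda<\lambda_n$ and with $\{|\hat{\beta}_{j\xi}-\beta_{j\xi}|>\lambda\}$ for $\lambda\ge\lambda_n$,
$$I_{j\xi}=\lambda_n\,\bP\big(|\hat{\beta}_{j\xi}-\beta_{j\xi}|>\lambda_n\big)+\int_{\lambda_n}^{\infty}\bP\big(|\hat{\beta}_{j\xi}-\beta_{j\xi}|>\lambda\big)\,d\lambda.$$

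For the first term I would appeal to \eqref{Bern4}, which gives $\bP\le 2/n^2$ thanks to the deliberate choice $\kappa=\cd(8A)^{1/2}$ built into $\lambda_n$; this choice is engineered precisely so that the Gaussian exponent in \eqref{Bern2} evaluated at $\lambda_n$ equals $-2\log n$. Substituting $\lambda_n=\kappa(\log n/n)^{1/2}$ yields
$$\lambda_n\,\bP\big(|\hat{\beta}_{j\xi}-\beta_{j\xi}|>\lambda_n\big)\le \frac{2\kappa}{n^2}\Big(\frac{\log n}{n}\Big)^{1/2}\le cA^{1/2}\Big(\frac{\log n}{n}\Big)^{1/2}\frac{1}{n^2},$$
which is exactly the claimed bound.

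For the tail integral I would split at $\mu_j$ from \eqref{def-mj} and apply Bernstein's inequality in each piece: \eqref{Bern2} on $[\lambda_n,\mu_j]$ (Gaussian regime) and \eqref{Bern3} on $[\mu_j,\infty)$. In the Gaussian piece, the substitution $u=\lambda(n/(2\cd^2A))^{1/2}$ reduces the integral, up to constants, to $\int_{2\sqrt{\log n}}^{\infty}e^{-u^2/2}du$, and the Mills-ratio estimate $\int_t^{\infty}e^{-u^2/2}du\le t^{-1}e^{-t^2/2}$ produces a contribution of order $A^{1/2}/(n^{5/2}\sqrt{\log n})$, smaller than the target since $1/\sqrt{\log n}\le\sqrt{\log n}$ for $n\ge 3$. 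The exponential piece integrates to $O\big(b^{jd/2}n^{-1}\exp(-9An/(16b^{jd}))\big)$; for $j\le J_n$ the bound $b^{jd}\le n/\log n$ from \eqref{bJ} produces a power of $n$ large enough to absorb the prefactor $b^{jd/2}$.

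The principal obstacle is not conceptual but bookkeeping: one must carefully match the constants $\cd$, $A$, and $\kappa$ so that the Gaussian exponent yields exactly $-2\log n$, and then verify that every subsequent term is dominated by the first. A minor subtlety concerns the stated range $0\le j\le n$: for $j>J_n$ one cannot apply \eqref{Bern4} directly, so one uses instead the deterministic bound $|\hat{\beta}_{j\xi}-\beta_{j\xi}|\le 2\cd b^{jd/2}$ from \eqref{sup} together with \eqref{Bern3}, obtaining $I_{j\xi}\le 2\cd b^{jd/2}\,\bP(|\hat{\beta}_{j\xi}-\beta_{j\xi}|>\lambda_n)$, and checks that the exponential decay in \eqref{Bern3} outpaces the growth of $b^{jd/2}$ uniformly in this range.
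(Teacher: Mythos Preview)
Your core argument for $0\le j\le J_n$ is correct and matches the paper's proof step for step: the same three-piece split $[0,\lambda_n]\cup[\lambda_n,\mu_j]\cup[\mu_j,\infty)$, the same use of \eqref{Bern2}--\eqref{Bern3}, and the Mills-ratio bound you invoke is exactly inequality \eqref{integrals} in the paper. There is nothing substantively different.

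Where your proposal goes wrong is in the final paragraph handling $J_n<j\le n$. The claim that ``the exponential decay in \eqref{Bern3} outpaces the growth of $b^{jd/2}$ uniformly in this range'' is false. With $x=b^{jd/2}$ and $C=3n\lambda_n/(8\cd)$, your bound reads $I_{j\xi}\le 4\cd\, x\,e^{-C/x}$; the function $x\mapsto xe^{-C/x}$ is increasing on $(0,\infty)$ and tends to $+\infty$, so for $j$ with $b^{jd/2}\gg n\lambda_n$ (certainly $j=n$) the exponent is essentially zero and you are left with $I_{j\xi}\lesssim b^{jd/2}$, which blows up. In fact the range ``$0\le j\le n$'' in the statement is a typo for ``$0\le j\le J_n$'': the paper only ever applies the lemma for $j\le J_n$, and its own proof of the piece $S_3$ explicitly invokes $b^{-jd}\ge b^{-J_nd}\ge \log n/n$, which fails otherwise. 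So you were right to flag the discrepancy, but the resolution is that the hypothesis is misstated, not that a separate argument covers the extra range.
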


\begin{proof}
We will use the following well known inequality: 
\begin{equation}\label{integrals}
\int_a^\infty e^{-K\frac{\lambda^2}2} d\lambda
\le  \frac{e^{-\frac{Ka^2}{2}}}{K^{1/2}} \Big((\pi/2)^{1/2} \wedge \frac 1{aK^{1/2}}\Big),
\quad K, a>0.
\end{equation}
We split the integral in \eqref{est-I-jxi} into three:
\begin{align*}
I_{j\xi}
=\Big(\int_0^{\lambda_n}+\int_{\lambda_n}^{\mu_j}+\int_{\mu_j}^\infty\Big)
\bP\Big(|\hat{\beta}_{j\xi}- \beta_{j\xi}| \ONE_{\{|\hat{\beta}_{j\xi}- \beta_{j\xi}|>\lambda_n\} }>\lambda\Big) d\lambda
=: S_1+S_2+S_3.
\end{align*}
From the definitions in \eqref{tn} and \eqref{def-mj} it readily follows that $\mu_j\ge \lambda_n$.

To estimate $S_1$ we use \eqref{Bern2} and the definitions of $\lambda_n$, $\kappa$ in \eqref{tn}, \eqref{def-A}.
We get
\begin{align*}
S_1 \le \lambda_n \bP( |\hat{\beta}_{j\xi}- \beta_{j\xi}|>\lambda_n)
&\le \lambda_n 2 \exp\Big(-\frac{n\lambda_n^2}{4\cd^2A}\Big)
\\
&\le 2\kappa \Big(\frac{ \log n}n\Big)^{1/2} n^{- \frac{\kappa^2}{4\cd^2A}}
\le 2\kappa \Big(\frac{\log n}n\Big)^{1/2}n^{-2},
\end{align*}
where we used that $\frac{\kappa^2}{4c^2A}=2$.
For $S_2$ we use inequality \eqref{integrals} and again \eqref{Bern2} to obtain
\begin{align*}
S_2 \le \int_{\lambda_n}^{\infty} 2\exp\Big(-\frac{n\lambda^2}{4\cd^2A}\Big) d\lambda
&\le  2(\pi/2)^{1/2}\Big(\frac{2\cd^2A}{n}\Big)^{1/2}\exp\Big(-\frac{n\lambda_n^2}{4\cd^2A}\Big)
\\
&\le 6\cd A^{1/2}n^{-1/2} n^{- \frac{\kappa^2}{4\cd^2A}}
= 6\cd A^{1/2}n^{-5/2}.
\end{align*}
We now estimate $S_3$. Using \eqref{Bern3} we obtain
\begin{align*}
S_3 &\le \int_{\mu_j}^\infty 2 \exp\Big(-\frac{3n\lambda}{8\cd b^{j d/2}}\Big) d\lambda
= \frac{16\cd b^{j\frac d2}}{3n} \exp\Big(-\frac{3n\mu_j}{8\cd b^{j d/2}}\Big)
\\
&= \frac{16\cd b^{j\frac d2}}{3n} \exp\Big(-\frac{ 9A}{16} n b^{-jd}\Big)
\le \frac{6\cd}{(n \log n)^{1/2}} n^{-\frac{ 9A}{16}}
\le \frac{6\cd}{(n \log n)^{1/2}} n^{-9/4},
\end{align*}
where for the second equality we used the definition of $\mu_j$ in \eqref{def-mj},
for the former inequality we used that
$b^{-jd} \ge  b^{-J_nd} = \frac{\log n}n$,
and for the last inequality that $A\geq 4$.

Putting the above estimates for $S_1, S_2$, and $S_3$ together we arrive at
$$
I_{j\xi}
\le 2\kappa \Big(\frac{\log n}n\Big)^{1/2}n^{-2}
+ 6\cd A^{1/2}n^{-5/2}
+ \frac{6\cd n^{-9/4}}{(n \log n)^{1/2}}
\le cA^{1/2}\Big(\frac{\log n}{n}\Big)^{1/2}n^{-2}
$$
as claimed.
\end{proof}


\begin{lemma}\label{L1}
Let $\cF$ be a finite family of functions $\phi: \bR \to \bR$ with $N:=\card(\cF)$
and let $X$ be a random variable.
Assume
\begin{equation}\label{R0}
\sup_{\phi \in \cF}\|\phi\|_\infty \le M/2,
\quad\hbox{and}\quad
\sup_{ \phi \in \cF} \bE\big(\phi^2(X)\big)\le \sigma^2.
\end{equation}
Let $ X_1,\ldots, X_n$ be i.i.d. random variables and $X_i\sim X$.
Then
\begin{equation}\label{R1}
\bE\Big[\sup_{\phi \in \cF}  \Big| \frac 1n \sum_{i=1}^n \big(\phi(X_i)-\bE\big(\phi(X)\big)\big)\Big|\Big]
\le 4\sigma \Big(\frac{\log (2N)}n\Big)^{1/2} + 4M \frac{\log (2N)}{n} .
\end{equation}
\end{lemma}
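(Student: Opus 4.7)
The plan is to combine Bernstein's inequality with a union bound over $\cF$ and then integrate the resulting tail bound, choosing a separate cutoff for each of the two regimes (sub-Gaussian and sub-exponential) of the Bernstein estimate.

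First I would fix $\phi\in\cF$ and set $Y_i^\phi := \phi(X_i)-\bE\phi(X)$. By the hypothesis \eqref{R0} these are centered i.i.d.\ variables with $|Y_i^\phi|\le M$ and $\bE(Y_i^\phi)^2\le\sigma^2$. Applying Bernstein's inequality in the form \eqref{Bernstein} and noting that $\sigma^2+Mv/3\le 2\max\{\sigma^2,Mv/3\}$, I would obtain the unconditional bound
\[
\bP\Big(\Big|\tfrac1n\sum_{i=1}^n Y_i^\phi\Big|>v\Big)\le 2e^{-nv^2/(4\sigma^2)}+2e^{-3nv/(4M)}, \qquad v>0,
\]
since whichever regime $v$ belongs to, Bernstein controls the probability by the corresponding summand. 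A union bound over the $N$ elements of $\cF$ then yields
\[
\bP\Big(\sup_{\phi\in\cF}\Big|\tfrac1n\sum_{i=1}^n Y_i^\phi\Big|>v\Big)\le 2Ne^{-nv^2/(4\sigma^2)}+2Ne^{-3nv/(4M)}.
\]

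Next I would write the expectation as $\int_0^\infty \bP(\cdot>v)\,dv$, intersect with $1$, and use the elementary inequality $\min(1,A+B)\le\min(1,A)+\min(1,B)$ to decouple the sub-Gaussian and sub-exponential contributions:
\[
\bE\sup_\phi\Big|\tfrac1n\sum_i Y_i^\phi\Big|\le\int_0^\infty\min\bigl(1,2Ne^{-nv^2/(4\sigma^2)}\bigr)dv+\int_0^\infty\min\bigl(1,2Ne^{-3nv/(4M)}\bigr)dv.
\]
The natural cutoffs
\[
v_1:=2\sigma\sqrt{\log(2N)/n},\qquad v_2:=\tfrac{4M\log(2N)}{3n}
\]
are precisely the points at which the two exponential factors equal $1/(2N)$; below each cutoff I would bound the minimum by $1$ (contributing $v_1$ and $v_2$), and above each cutoff I would integrate the tail. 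The Gaussian tail above $v_1$ is controlled via \eqref{integrals} and contributes at most $\sigma/\sqrt{n\log(2N)}$, while the exponential tail above $v_2$ integrates directly to $4M/(3n)$.

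Assembling the four pieces, the total is at most $v_1+\sigma/\sqrt{n\log(2N)}+v_2+4M/(3n)$; since $\log(2N)\ge\log 2>1/2$ whenever $N\ge 1$, both correction terms are dominated by the leading ones and the sum telescopes to the claimed $4\sigma\sqrt{\log(2N)/n}+4M\log(2N)/n$. The only real difficulty is the bookkeeping of constants, making sure the slack from the two tail integrals fits inside the numerical factor $4$; the probabilistic ingredients (Bernstein plus union bound) are completely standard and no chaining is needed since $\cF$ is finite.
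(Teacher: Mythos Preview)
Your proof is correct and follows essentially the same route as the paper: Bernstein's inequality, a union bound over the $N$ functions, writing the expectation as a tail integral, and optimizing separate cutoffs for the Gaussian and exponential regimes---even the final constant bookkeeping matches. The only cosmetic difference is that the paper decouples the two regimes by splitting the random variable itself as $Z_\phi\ONE_{\{Z_\phi\le\lambda_0\}}+Z_\phi\ONE_{\{Z_\phi>\lambda_0\}}$ with $\lambda_0=3\sigma^2/M$, whereas you decouple at the level of the tail bound via $e^{-\min(a,b)}\le e^{-a}+e^{-b}$ and $\min(1,A+B)\le\min(1,A)+\min(1,B)$; the resulting integrals and cutoffs coincide.
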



\begin{proof}
By Bernstein's inequality \eqref{Bernstein} it follows that for any $\phi\in\cF$
\begin{equation}\label{Bern55}
\bP\Big(\Big| \frac 1n \sum_{i=1}^n \phi(X_i)-\bE [\phi(X)] \Big| \geq \lambda \Big)
\le 2 \exp\Big(-\frac{n\lambda^2}{4\sigma^2}\wedge \frac{ 3n\lambda}{4M}\Big).
\end{equation}
Let $\lambda_0:= \frac{3\sigma^2}{M}$ and
$Z _\phi:=  | \frac 1n \sum_{i=1}^n   (\phi(X_i)-\bE [\phi(X)])|.$
Then
$$
\sup_{\phi \in \cF} Z_\phi = \sup_{\phi \in \cF}\{Z_\phi \ONE_{\{Z_\phi \le \lambda_0\}}
+ Z_\phi\ONE_{\{Z_\phi > \lambda_0\}}\}
\le \sup_{\phi \in \cF} Z_\phi \ONE_{\{Z_\phi \leq \lambda_0\}}  +\sup_{\phi\in \cF}
Z_\phi \ONE_{\{Z_\phi > \lambda_0\}}  .
$$
From \eqref{Bern55} it follows that for any $\lambda >0$
$$
\bP (Z _\phi \ONE_{\{Z_\phi \leq \lambda_0\}} >\lambda)
\leq 2 e^{- \frac{n\lambda^2}{4\sigma^2} }
\quad\hbox{and}\quad
\bP(Z_\phi \ONE_{\{Z_\phi > \lambda_0\}} >\lambda)
\le 2 e^{-\frac{ 3n\lambda}{4 M}}.
$$
We use these inequalities and (\ref{integrals}) to obtain
\begin{align*}
\bE&\Big[\sup_{\phi \in \cF} Z_\phi\Big]
\le \bE \Big[\sup_{\phi \in \cF}  Z _\phi \ONE_{\{Z_\phi \leq \lambda_0\}}\Big]
+ \bE\Big[\sup_{\phi\in \cF} Z_\phi \ONE_{\{Z_\phi > \lambda_0\}}\Big]
\\
&= \int_0^{\infty} \bP \Big(\sup_{\phi \in \cF} Z_\phi \ONE_{\{Z_\phi \leq \lambda_0\}}>\lambda\Big) d\lambda
+  \int_0^{\infty} \bP\Big(\sup_{\phi \in \cF}  Z_\phi \ONE_{\{Z_\phi > \lambda_0\}}>\lambda\Big) d\lambda
\\
&\le  a + \sum_{\phi \in \cF} \int_a^{\infty} \bP\Big(  Z _\phi \ONE_{\{Z_\phi \leq \lambda_0\}}>\lambda\Big) d\lambda
+ b + \sum_{\phi \in \cF}  \int_b^{\infty} \bP\Big( Z _\phi \ONE_{\{Z_\phi > \lambda_0\}}>\lambda\Big) d\lambda
\\
&\le a + N \int_a^{\infty} 2 e^{- \frac{n\lambda^2}{4\sigma^2}}d\lambda
+ b + N \int_b^{\infty} 2 e^{-\frac{ 3n\lambda}{4 M}} d\lambda
\\
&\leq a +2N \frac{2\sigma^2}{na} e^{- \frac{na^2}{4\sigma^2} }+ b + 2N\frac{4M}{3n}e^{-\frac{ 3nb}{4 M}}.
\end{align*}
We now optimize with respect to $a$ and $b$ by taking
$2Ne^{-\frac{na^2}{4\sigma^2} } =1$ and $2N e^{-  \frac{ 3nb}{4 M}}=1$.
We obtain
\begin{align*}
\bE\Big[\sup_{\phi \in \cF} Z_\phi \Big]&\leq a + \frac{2\sigma^2}{na}+ b + \frac{4M}{3n}
\\
&=2\sigma \Big(\frac{\log (2N)}n\Big)^{1/2} +\frac{\sigma}{(n\log (2N))^{1/2}}+ \frac{4M}{3n}\log (2N) + \frac{4M}{3n}
\\
&=\sigma \Big(\frac{ \log (2N)}n\Big)^{1/2} \Big(2+ \frac 1{\log (2N)}\Big) +\frac{4M}{n} \frac{1+  \log(2N)}3
\\
&\le 4\sigma \Big(\frac{\log (2N)}n\Big)^{1/2} + \frac{4M}n \log (2N),
\end{align*}
where we used that $2\log(2N) \geq \log 4\geq 1$.
The proof of \eqref{R1} is complete.
\end{proof}

\begin{remark}
Note that by assumption \eqref{R0} it follows that for any $\phi \in \cF$
\begin{equation}\label{R2}
|\phi(X) -\bE(\phi(X))| \leq M
\quad\text{and}\quad
\bE|\phi(X) -\bE(\phi(X))|^2 \le \sigma^2.
\end{equation}
\end{remark}

\subsection{Proof of Theorem~\ref{Tresh}}

We will carry out this proof in several steps.

First, assuming that $f\in \bL^p$, $1\le p\le\infty$,
we use \eqref{frame-decom}-\eqref{frame2} and \eqref{WDE} to write
\begin{equation*}
\hat{f}_n- f = \sum_{0\le j\le J_n}\sum_{\xi\in\cX_j}
\big(\hat{\beta}_{j\xi}\ONE_{\{\hat{\beta}_{j\xi}>2\lambda_n\}} - \beta_{j\xi}\big)\psi_{j\xi}+\sum_{j>J_n}\Psi_j(\sqrt{L})f,
\end{equation*}
which implies an estimate on the risk as a sum of stochastic and bias terms:
\begin{equation*}
\bE \|\hat{f}_n -f\|_p
\le \sum_{0\le j\le J_n}
\bE\Big\| \sum_{\xi \in \cX_j}(\hat{\beta}_{j, \xi}^{*}-\beta_{j\xi})\psi_{j, \xi} \Big\|_p
+ \Big\|\sum_{j>J_n} \Psi_j(\sqrt L) f \Big\|_p.
\end{equation*}

\subsubsection{Estimation of the bias term}

By the triangle inequality we get
\begin{equation*}
\Big\|\sum_{j > J_n} \Psi_j(\sqrt L) f\Big\|_p \leq \sum_{j> J_n} \|\Psi_j(\sqrt L) f\|_p.
\end{equation*}
Two cases are to be considered here.
Let $r\le p \le \infty$
and set $s_1:=s- d(\frac 1r-\frac 1p)>0$.
Then using Proposition \ref{Pr:Embed} (ii), \eqref{useful-Besov}, and \eqref{bJ}
we obtain
\begin{align*}
\sum_{j> J_n} \|\Psi_j(\sqrt L) f\|_p
&\le \|f\|_{B^{s_1}_{p\tau}}\sum_{j > J_n}b^{-js_1}
\\
&\le c\| f \|_{B^s_{r\tau}} b^{-J_ns_1}
\le c\| f \|_{B^s_{r\tau}} \Big(\frac{\log n}n\Big)^{\frac sd -(\frac 1r -\frac 1p)}.
\end{align*}
Let $p < r \le \infty$.
By H\"{o}lder's inequality (using $\mu(M)<\infty$) and \eqref{useful-Besov} we get
\begin{equation*}
\sum_{j > J_n} \|\Psi_j(\sqrt L) f\|_p
\le c\sum_{j > J_n} \|\Psi_j(\sqrt L) f\|_r
\le c\| f \|_{B^s_{r\tau}}\sum_{j\ge J_n}b^{-js}
\le c\| f \|_{B^s_{r\tau}} \Big(\frac{\log n}n\Big)^{\frac sd}.
\end{equation*}
Therefore, we have the following estimate for the bias
\begin{equation}\label{bias}
\Big\| \sum_{j > J_n} \Psi_j(\sqrt L) f \Big\|_p
\le c\| f \|_{B^s_{r\tau}} \Big(\frac{\log n}n\Big)^{\frac sd -(\frac 1r -\frac 1p)_+}.
\end{equation}

We next show that the rate from above is negligible compared to the rates in \eqref{main-1}-\eqref{main-3}.
First, if $p=\infty$, we have to verify that
$$
\frac sd -\frac 1r > \frac{s-\frac dr}{2(s-d(\frac 1r-\frac 12))}
=\Big(\frac sd -\frac 1r\Big) \frac d{2(s-\frac dr) +d}.
$$
But this is obvious as $s>\frac dr$.

If $s\le \frac{dp}2(\frac 1r-\frac 1p)$, we have $2<p < \infty$ and $r<p$ as $s>\frac dr$.
We have to verify that
$$
\frac sd -\Big(\frac 1r-\frac 1p\Big) > \frac{s-d(\frac 1r-\frac 1p)}{2(s-d(\frac 1r-\frac 12))}
= \Big(\frac sd-\Big(\frac 1r-\frac 1p\Big)\Big)\frac{d}{2(s-\frac dr) +d},
$$
which is obvious.

If $ s > \frac{dp}2(\frac 1r-\frac 1p)$ and $p \leq r$,
we have to show that
$\frac sd > \frac s{2s+d}$,
which again is  obvious.

If $ s > \frac{dp}2(\frac 1r-\frac 1p) >0$ (hence $r<p$)
we have to verify that
$$
\frac sd -\Big(\frac 1r-\frac 1p\Big) > \frac s{2s+d}
\quad \Longleftrightarrow \quad
\frac sd -\frac s{2s+d} >\frac 1r-\frac 1p.
$$
In fact, we have
$$
s > \frac{dp}2\Big(\frac 1r-\frac 1p\Big) \vee \frac dr=:a(p,r).
$$
As the function $s\mapsto \frac sd -\frac s{2s+d}$ is strictly increasing,
we just need to show that
$\frac{a(p,r)}d -\frac{a(p,r)}{2s+d} \geq (\frac 1r-\frac 1p)$.
However,
$$
\frac{dp}2\Big(\frac 1r-\frac 1p\Big)
\le \frac dr
\quad \Longleftrightarrow \quad
p \le r+2
\quad \Longleftrightarrow \quad
\frac{\frac dr}d -\frac{\frac dr}{2s+d} \ge \frac 1r-\frac 1p
$$
and
$$
r+2 <p
\quad \Longleftrightarrow \quad
\frac{\frac{dp}2(\frac 1r-\frac 1p)}d -\frac{\frac{dp}2(\frac 1r-\frac 1p)}{2s+d}
> \frac 1r-\frac 1p,
$$
as it can be easily verified.

From above it follows that the rate in \eqref{bias} is faster than the rates in \eqref{main-1}-\eqref{main-3}.

\subsubsection{Evaluation of the stochastic term}

Note that by \eqref{bg3} we have
\begin{equation}\label{stochast-1}
\bE\Big\| \sum_{\xi \in \cX_j}(\hat{\beta}_{j \xi}^{*}-\beta_{j\xi})\psi_{j\xi} \Big\|_p
\le  c b^{jd(\frac{1}{2}-\frac{1}{p})}
\bE\Big[\Big(\sum_{\xi \in \cX_j}|\hat{\beta}_{j\xi}^{*}-\beta_{j\xi}|^p\Big)^{1/p}\Big],
\quad 1\le p<\infty,
\end{equation}
and this holds with the usual modification when $p=\infty$.

To estimate the stochastic term we will use the following representation
\begin{align*}
\hat{\beta}_{j\xi}^{*}-\beta_{j\xi}
&=(\hat{\beta}_{j\xi}-\beta_{j\xi})\ONE_{\{|\hat{\beta}_{j\xi}| >2\lambda_n\}}
-\beta_{j\xi}\ONE_{\{|\hat{\beta}_{j\xi}|\le 2\lambda_n\}}
\\
&=(\hat{\beta}_{j\xi}-\beta_{j\xi})\ONE_{\{|\hat{\beta}_{j\xi}| >2\lambda_n\}}\ONE_{\{|\beta_{j\xi}|\le\lambda_n\}}
\\
&+(\hat{\beta}_{j\xi}-\beta_{j\xi})\ONE_{\{|\hat{\beta}_{j\xi}| > 2\lambda_n\}}\ONE_{\{|\beta_{j\xi}|>\lambda_n\}}
\\
&-\beta_{j\xi}\ONE_{\{|\hat{\beta}_{j \xi}| \le 2\lambda_n\}}\ONE_{\{|\beta_{j \xi}| \leq 3\lambda_n\}}
\\
&- \beta_{j\xi}\ONE_{\{|\hat{\beta}_{j \xi}| \le 2\lambda_n\}}\ONE_{\{|\beta_{j \xi}| > 3\lambda_n\}}.
\end{align*}
In the case when $1\le p<\infty$, we use this and \eqref{stochast-1} to write
\begin{equation}\label{est-stoch-p}
\sum_{0 \le j \le J_n}
\bE\Big\| \sum_{\xi \in \cX_j}\big(\hat{\beta}_{j \xi}\ONE_{\{|\hat{\beta}_{j\xi}|>2\lambda_n\}}-\beta_{j\xi}\big)\psi_{j \xi} \Big\|_p
\le c\big(I +II +III +IV\big),
\end{equation}
where
\begin{align*}
I &:= \sum_{0 \le j \le J_n} b^{-jd(\frac 1p-\frac 12)}
\bE\Big[\Big(\sum_{\xi \in \cX_j}|\hat{\beta}_{j \xi} -\beta_{j\xi}|^p\Big]
\ONE_{\{|\hat{\beta}_{j \xi}- \beta_{j \xi}| >\lambda_n\}}\Big)^{\frac 1p},
\\
II &:=\sum_{0 \le j \le J_n} b^{-jd(\frac 1p-\frac 12)}
\bE\Big[\Big(\sum_{\xi \in \cX_j}|\hat{\beta}_{j \xi} -\beta_{j\xi}|^p \ONE_{\{|\hat{\beta}_{j \xi}| >2\lambda_n\}}
\ONE_{\{|\beta_{j, \xi}| >\lambda_n\}}\Big)^{\frac 1p}\Big],
\\
III&:=\sum_{0 \le j \le J_n} b^{-jd(\frac 1p-\frac 12)}
\bE\Big[\Big(\sum_{\xi \in \cX_j} |\beta_{j\xi}|^p \ONE_{\{|\hat{\beta}_{j \xi}| \le 2\lambda_n\}}
\ONE_{\{|\beta_{j \xi}| \le 3\lambda_n\}}\Big)^{\frac 1p}\Big]
\\
IV &:=\sum_{0 \le j \le J_n} b^{-jd(\frac 1p-\frac 12)}
\bE\Big[\Big(\sum_{\xi \in \cX_j} |\beta_{j\xi}|^p
\ONE_{\{|\hat{\beta}_{j \xi}- \beta_{j \xi}| >\lambda_n\}}\Big)^{\frac 1p}\Big].
\end{align*}
In the case $p=\infty$ we have
\begin{equation}\label{est-stoch-infty}
\sum_{0 \le j \le J_n}
\bE\Big\| \sum_{\xi \in \cX_j}\big(\hat{\beta}_{j \xi}\ONE_{\{|\hat{\beta}_{j\xi}|>2\lambda_n\}}-\beta_{j\xi}\big)\psi_{j \xi} \Big\|_\infty
\le c\big(I' +II' +III' +IV'\big),
\end{equation}
where
\begin{align*}
I'&:=\sum_{0 \le j \le J_n} b^{j\frac d2}
\bE\Big[\sup_{\xi \in \cX_j}(|\hat{\beta}_{j \xi} -\beta_{j\xi}|
\ONE_{\{|\hat{\beta}_{j \xi}- \beta_{j \xi}| >\lambda_n\}}\Big],
\\
II'&:= \sum_{0 \le j \le J_n} b^{j\frac d2}
\bE\Big[\sup_{\xi \in \cX_j}|\hat{\beta}_{j \xi} -\beta_{j\xi}|
\ONE_{\{|\hat{\beta}_{j \xi}| >2\lambda_n\}}\ONE_{\{|\beta_{j \xi}| >\lambda_n\}}\Big],
\\
III' &:=\sum_{0 \le j \le J_n} b^{j\frac d2}
\bE\Big[\sup_{\xi \in \cX_j} |\beta_{j\xi}|\ONE_{\{|\hat{\beta}_{j\xi}| \le 2\lambda_n\}}
\ONE_{\{|\beta_{j \xi}| \le 3\lambda_n\}}\Big],
\\
IV'&:=\sum_{0 \le j \le J_n} b^{j\frac d2}
\bE\Big[\sup_{\xi \in \cX_j}|\beta_{j\xi}|
\ONE_{\{|\hat{\beta}_{j \xi}- \beta_{j \xi}| >\lambda_n\}}\Big].
\end{align*}

\noindent
\textbf{Estimation of \boldmath $I$ and $I'$.}
As $\card (\cX_j) \le  cb^{jd}$ by (\ref{cardXj}), we derive
\begin{align*}
I&\le  \sum_{0 \le j \le J_n} b^{-jd(\frac 1p-\frac 12)}(\card (\cX_j))^{\frac 1p}
\bE\Big[\sup_{\xi \in \cX_j}|\hat{\beta}_{j \xi} -\beta_{j\xi}|\ONE_{\{|\hat{\beta}_{j \xi}- \beta_{j\xi}| >\lambda_n\}}\Big]
\\
&\le c\sum_{0 \le j \le J_n} b^{j\frac d2}
\bE\Big[\sup_{\xi \in \cX_j}|\hat{\beta}_{j \xi} -\beta_{j\xi}|\ONE_{\{|\hat{\beta}_{j \xi}- \beta_{j \xi}| >\lambda_n\}}\Big]
= cI'.
\end{align*}
Now, in light of Lemma \ref{technical1} we obtain
\begin{align*}
I&\le cI'= c\sum_{0 \le j \le J_n} b^{j\frac d2}
\bE\Big[\sup_{\xi \in \cX_j}( |\hat{\beta}_{j \xi} -\beta_{j\xi}|\ONE_{\{|\hat{\beta}_{j \xi}- \beta_{j \xi}| >\lambda_n\}}\Big]
\\
&\le c\sum_{0 \le j \le J_n} b^{j\frac d2}
 \sum_{\xi \in \cX_j}\bE\big[|\hat{\beta}_{j \xi} -\beta_{j\xi}|\ONE_{\{|\hat{\beta}_{j \xi}- \beta_{j \xi}| >\lambda_n\}}\big]
\\
&= c\sum_{0 \le j \le J_n} b^{j\frac d2} \sum_{\xi \in \cX_j}
\int_0^\infty \bP\big(|\hat{\beta}_{j\xi}- \beta_{j\xi}|\ONE_{\{|\hat{\beta}_{j\xi}- \beta_{j\xi}|>\lambda_n\} }>\lambda\big)d\lambda
\\
&\le c\Big(\frac{\log n}{n}\Big)^{1/2}\frac{1}{n^2} \sum_{0 \le j \le J_n} b^{3j\frac d2}
\le c\Big(\frac{\log n}{n}\Big)^{1/2}\frac{1}{n^2} b^{3J_n\frac d2}
=\frac{c}{n\log n}.
\end{align*}
Therefore,
\begin{equation}\label{est-I}
I\le cI'\le \frac{c}{n\log n},
\end{equation}
and hence the terms $I$ and $I'$ are negligible
compared to the rates in \eqref{main-1}-\eqref{main-3}.

\medskip

\noindent
\textbf{Estimation of \boldmath $IV$ and $IV'$.}
Observe that
\begin{align*}
IV'&=\sum_{0 \le j\le J_n} b^{j\frac d2}
\bE\big[\sup_{\xi \in \cX_j} |\beta_{j\xi}|
\ONE_{\{|\hat{\beta}_{j \xi}- \beta_{j, \xi}| >\lambda_n\}}\big]
\\
&\le \sum_{0 \le j \le J_n} b^{j\frac d2} \sup_{\xi \in \cX_j}  |\beta_{j\xi}|
\sum_{\xi \in \cX_j} \bP( |\hat{ \beta}_{j\xi}-\beta_{j\xi}| >   \lambda_n)=:IV''.
\end{align*}
On the other hand, Jensen's inequality and the fact that $\card (\cX_j) \le  cb^{jd}$ 
yield
\begin{align*}
IV&\le \sum_{0 \leq j \leq J_n} b^{j \frac d2  } b^{-j \frac dp  }
\Big(\bE\Big[\sum_{\xi \in \cX_j} |\beta_{j\xi}|^p \ONE_{\{|\hat{\beta}_{j, \xi}- \beta_{j, \xi}| >\lambda_n\}}\Big]\Big)^{\frac 1p}
\\
&\leq\sum_{0 \le j \le J_n} b^{j \frac d2  } b^{-j \frac dp  }
\bE\big[\sup_{\xi \in \cX_j} |\beta_{j\xi}| \ONE_{\{|\hat{\beta}_{j, \xi}- \beta_{j, \xi}| >\lambda_n\}}\big]
(\card(\cX_j)^{\frac 1p}
\\
&\le c  \sum_{0 \le j \le J_n} b^{j \frac d2  }
\bE\big[\sup_{\xi \in \cX_j} |\beta_{j\xi}| \ONE_{\{|\hat{\beta}_{j, \xi}- \beta_{j, \xi}| >\lambda_n\}}\big]
\le c IV''.
\end{align*}
Now, as $\bP (|\hat{\beta}_{j, \xi}- \beta_{j, \xi}| >\lambda_n) \leq 2n^{-2}$, $0\le j\le J_n$,
by \eqref{Bern4},
and
\begin{equation}\label{est-bet-B}
|\beta_{j\xi}| \le cb^{-j(s+d(\frac 12 - \frac 1r))}\|f\|_{B^s_{r\tau}}, \quad \xi\in\cX_j
\end{equation}
by \eqref{bg4}, we derive
\begin{align*}
IV''&\le cn^{-2}\sum_{0 \le j \le J_n} b^{jd/2}
\sup_{\xi \in \cX_j}  |\beta_{j\xi}|\card(\cX_j)
\\
&\le cn^{-2} \|f\|_{B^s_{r\tau}}
\sum_{0\le j\le J_n}  b^{j\frac{3d}{2}}  b^{-js}b^{jd(\frac 1r-\frac 12)}
\\
&\le cn^{-2} \|f\|_{B^s_{r\tau}}b^{J_n d}\sum_{j=0}^{\infty}  b^{-j(s-\frac{d}{r})}
\\
&\le cn^{-2} \|f\|_{B^s_{r\tau}}\frac{n}{\log n}
=c\| f\|_{B^s_{r\tau}} \frac 1{n\log n}.
\end{align*}
Above we also used the fact that $s>d/r$.
Therefore,
\begin{equation}\label{est-IV}
IV, IV'\le \frac{c}{n\log n},
\end{equation}
and hence the terms $IV$ and $IV'$ are also negligible.

\medskip

\noindent
\textbf{Estimation of \boldmath $II'$ and $III'$.}
%
We first estimate $III'$.
Using \eqref{est-bet-B} we get
\begin{align*}
III'&\le\sum_{0 \le j \le J_n}
\sup_{\xi \in \cX_j}  (b^{j\frac d2}|\beta_{j\xi}|) \wedge ( 3 b^{j\frac d2}\lambda_n )
\\
&\le c\sum_{0 \le j \le J_n}(b^{-j(s-\frac dr)}\|f\|_{B^s_{r\tau}})\wedge( 3 b^{j\frac d2}\lambda_n).
\end{align*}
We now introduce a new parameter $J'_n$ by the identity:
$$
b^{-J'_n (s-\frac dr)} \|f\|_{B^s_{r\tau}}= b^{J'_n \frac d2 }\lambda_n.
$$
Hence,
$$
b^{J'_n (s-  d (\frac 1r-\frac 12))}= \frac{\| f \|_{B^s_{r\tau}}}{\lambda_n}
= \frac{\| f \|_{B^s_{r\tau}}}{\kappa}\Big(\frac n{\log n}\Big)^{\frac 12}.
$$
Then we obtain, taking into account that $s>d/r$,
\begin{align*}
III'&\le c\lambda_n \sum_{0 \leq j <J'_n}
b^{j\frac d2} +  c\| f \|_{B^s_{r\tau}}\sum_{J'_n \le j <J_n} b^{-j (s-\frac dr)}
\\
&\le c \lambda_n b^{J'_n\frac d2}
+ c(s,r) \| f \|_{B^s_{r\tau}}b^{-J'_n (s-\frac dr)}
\\
&\le c\| f \|_{B^s_{r\tau}}b^{-J'_n (s-\frac dr)}
\le c\| f \|_{B^s_{r\tau}}^{\frac{\frac d2}{s-d(\frac 1r-\frac 12)}}
\Big(\kappa \sqrt{\frac{\log n}n}\Big)^{\frac{s-\frac dr}{s-d(\frac 1r-\frac 12)}}
\\
&\le c(s,r, \kappa)  \| f \|_{B^s_{r\tau}}^{\frac{d}{ 2(s- d(\frac 1r -\frac 12)}}
\Big(\frac{\log n}n\Big)^{\frac{s-\frac dr}{ 2(s- d(\frac 1r -\frac 12))}}.
\end{align*}
Therefore,
\begin{equation}\label{est-III'}
III' \le c\Big(\frac{\log n}n\Big)^{\frac{s-\frac dr}{ 2(s- d(\frac 1r -\frac 12))}}.
\end{equation}

To estimate $II'$ we first observe that
\begin{align*}
II'&= \sum_{0 \le j \le J_n} b^{j\frac d2}
\bE\big[\sup_{\xi \in \cX_j}|\hat{\beta}_{j\xi} -\beta_{j\xi}|
\ONE_{\{|\hat{\beta}_{j\xi}| >2\lambda_n\}}\ONE_{\{|\beta_{j, \xi}| >\lambda_n\}}\big]
\\
&\le \sum_{0 \leq j \le J_n} b^{j\frac d2}
\bE\big[\sup_{\xi \in \cX_j}|\hat{\beta}_{j\xi} -\beta_{j\xi}| \ONE_{\{|\beta_{j\xi}| >\lambda_n\}}\big].
\end{align*}
By \eqref{est-bet-B} we have
$$
\kappa \Big(\frac{\log n}n\Big)^{\frac{1}{2}}
=\lambda_n < |\beta_{j\xi}| \le c\| f \|_{B^s_{r\tau}} b^{-j(s-d(\frac 1r-\frac 12))},
\quad 0\le j\le J_n.
$$
Therefore, necessarily
$0\le j \le \tilde{J}_n$,
where
$$
b^{\tilde{J}_n}\sim\Big(\frac{ \| f \|_{B^s_{r\tau}} }{\kappa}\Big)^{\frac 1{s-d( \frac 1r-\frac 12)}  }
\Big(\frac n{\log n}\Big)^{\frac 1{ 2(s-d( \frac 1r-\frac 12))}}
$$
and consequently
$$
II'\le \sum_{0 \leq j <\tilde{J}_n} b^{j\frac d2}
\bE\big[\sup_{\xi \in \cX_j}|\hat{\beta}_{j, \xi} -\beta_{j\xi}|\big].
$$
We next utilize Lemma~\ref{L1}.
Consider the family
$\cF:=\{\tilde{\psi}_{j\xi}: \xi\in\cX_j\}$ with $\card(\cF)\le cb^{jd}$
and by (\ref{bg2})
\begin{align*}
\|\tilde{\psi}_{j\xi}\|_{\infty}\le \cd b^{jd/2}=: M/2
\quad\hbox{and}\quad
\bE|\tilde{\psi}_{j\xi}(X)|^2\le \cd \|f\|_{\infty}=:\sigma^2.
\end{align*}
Then applying Lemma~\ref{L1} we get
\begin{align*}
\bE\big[\sup_{\xi \in \cX_j}|\hat{\beta}_{j, \xi} -\beta_{j\xi}|\big]
&\le c\|f\|_{\infty}^{\frac 12}\Big(\frac{\log (b^{jd})}n\Big)^{\frac 12} + c b^{j\frac d2}\frac{ \log (b^{jd})}{n}
\\
&\le c\|f\|_{\infty}^{\frac 12}\Big(\frac{j}n\Big)^{\frac 12} + cb^{j\frac d2}\frac{j}n.
\end{align*}
Since also $\tilde{J}_n \le c(s,r) \log n$, we get
$$
II'\le c\sum_{0 \le j \le \tilde{J}_n} b^{j\frac d2}
\Big(\|f\|_{\infty}^{\frac 12}\Big(\frac{j}n\Big)^{\frac 12} + b^{j  \frac d2} \frac{j}n \Big)
\le c\|f\|_{\infty}^{\frac 12} \Big(b^{d\tilde{J}_n}\frac{ \tilde{J}_n}n\Big)^{\frac 12}
+ cb^{d\tilde{J}_n}\frac{\tilde{J}_n}n .
$$
But
\begin{align*}
b^{d\tilde{J}_n}  \frac{ \tilde{J}_n}n
&\le c
\| f \|_{B^s_{r\tau}}^{\frac d{s-d( \frac 1r-\frac 12)}}
\Big(\frac n{\log n}\Big)^{\frac d{ 2(s-d( \frac 1r-\frac 12))}}\frac{\log n}n
\\
&= c \| f \|_{B^s_{r\tau}}^{\frac d{s-d( \frac 1r-\frac 12)}}
\Big(\frac{\log n}n\Big)^{\frac{(s-\frac dr)}{ (s-d( \frac 1r-\frac 12))}}.
\end{align*}
Thus, because $s>d/r$ we conclude, using Proposition~\ref{Pr:Embed} (ii),
$$
II'\le c\Big(\|f\|_{B^s_{r\tau}}^{1/2}\|f\|_{B^s_{r\tau}}^{\frac d{2(s-d(\frac 1r-\frac 12))}}
+\| f \|_{B^s_{r\tau}}^{\frac d{s-d( \frac 1r-\frac 12)}}\Big)
\Big(\frac{\log n}n\Big)^{\frac{s-\frac dr}{2(s-d(\frac 1r-\frac 12))}}.
$$
Therefore,
\begin{equation}\label{est-II'}
II' \le c\Big(\frac{\log n}n\Big)^{\frac{s-\frac dr}{2(s-d(\frac 1r -\frac 12))}}.
\end{equation}

\medskip

\noindent
\textbf{Estimation of II and III.} 
As $p\ge1$ Jensen's inequality implies
\begin{align*}
II&\le \sum_{0\le j\le J_n} b^{jd(\frac 12-\frac 1p)}
\bE \Big[\Big( \sum_{\xi \in \cX_j}|\hat{\beta}_{j\xi} -\beta_{j\xi} |^p
 \ONE_{\{ |\beta_{j\xi}| > \lambda_n\}}\Big)^{\frac 1p}\Big]
\\
&\le \sum_{0\le j\le J_n} b^{jd(\frac 12-\frac 1p)}
\Big(\bE\Big[\sum_{\xi \in \cX_j}|\hat{\beta}_{j\xi} -\beta_{j\xi} |^p
\ONE_{\{|\beta_{j\xi}| > \lambda_n\}}\Big]\Big)^{\frac 1p}
\\
&=\sum_{0\le j\le J_n}
\Big(b^{jd(\frac p2-1)} \sum_{\xi \in \cX_j } \ONE_{\{|\beta_{j\xi}| > \lambda_n\}}
\bE\Big[ |\hat{\beta}_{j\xi} -\beta_{j\xi} |^p\Big]\Big)^{\frac 1p}.
\end{align*}
By \eqref{Rosen3} and \eqref{Rosen2} it follows that
$\bE [|\hat{\beta}_{j,k} -\beta_{j,k} |^p] \le c\left(\frac An\right)^{p/2}$
and hence
$$
II \le c\Big(\frac{A}{n}\Big)^{\frac 12} \sum_{0\le j\le J_n} \Big( b^{jd(\frac p2-1)}
\sum_{\xi \in \cX_j} \ONE_{\{ |\beta_{j\xi}| > \lambda_n\}}\Big)^{\frac 1p}.
$$
Let $q<p$. Using the Bienayme-Chebyshev inequality we get
\begin{align}\label{est-II}
II &\le c\Big(\frac{A}{n}\Big)^{\frac 12} \sum_{0\le j\le J_n}
\Big(b^{jd(\frac p2-1)} \frac 1{ \lambda_n^{q}  } \sum_{\xi \in \cX_j}|\beta_{j\xi}|^q\Big) ^{\frac 1p}
\\
&=c(\log n)^{-1/2}\lambda_n^{1-\frac qp}\sum_{0\le j\le J_n}\Big(b^{jd(\frac p2-1)}
\sum_{\xi \in \cX_j}|\beta_{j\xi}|^q \Big)^{\frac 1p}.\nonumber
\end{align}
For the term $III$ we have
\begin{align*}
III &\le \sum_{0\le j\le J_n} b^{jd(\frac 12-\frac 1p)}
\bE\Big[\Big(\sum_{\xi \in \cX_j}|\beta_{j\xi}|^p
\ONE_{\{|\beta_{j\xi}| \leq 3 \lambda_n\}}\Big)^{\frac 1p}\Big]
\\
&=\sum_{0\le j\le J_n}\Big(  b^{jd(\frac p2-1)} \sum_{\xi \in \cX_j}|\beta_{j\xi}|^p \ONE_{\{|\beta_{j\xi}|
\le 3 \lambda_n\}}\Big)^{\frac 1p}.
\end{align*}
Assuming that $\lambda>0$, $0<q < p$, and $\sigma$ is a positive measure on a measure space $X$, we have
\begin{align*}
\int_X \ONE_{\{|f| \leq \lambda\}} |f|^p d\sigma
&\le \int_X (|f|\wedge \lambda)^p d\sigma
= \int_0^\lambda px^{p-1} \sigma (|f|>x)dx
\\
&\le \int_0^\lambda px^{p-1} \frac{\| f \|_q^q}{x^q}dx
= \frac p{p-q}\| f \|_q^q \lambda^{p-q}.
\end{align*}
Therefore, if $q<p$ we have
\begin{align*}
III &\le \sum_{0\le j\le J_n}\Big( b^{jd(\frac p2-1)} \sum_{\xi \in \cX_j}|\beta_{j\xi}|^p
\ONE_{\{|\beta_{j\xi}|\leq 3 \lambda_n\}}\Big) ^{\frac 1p}
\\
&\le c(p,q)\sum_{0\le j\le J_n}\Big(\lambda_n^{p-q} b^{jd(\frac p2-1)}
\sum_{\xi \in \cX_j}|\beta_{j\xi}|^q\Big)^{\frac 1p}
\\
&\le c\lambda_n^{1-\frac qp}\sum_{0\le j\le J_n}\Big(b^{jd(\frac p2-1)}
\sum_{\xi \in \cX_j}|\beta_{j\xi}|^q\Big)^{\frac 1p}.
\end{align*}
This coupled with \eqref{est-II} yields
\begin{equation}\label{est-II-III}
II+III \le c\lambda_n^{1-\frac qp}\sum_{0\le j\le J_n}\Big(b^{jd(\frac p2-1)}
\sum_{\xi \in \cX_j}|\beta_{j\xi}|^q\Big)^{\frac 1p},
\quad 0<q<p.
\end{equation}
Assuming $0<q<p$ we set $s':=\frac{d(p-q)}{2q}$.
By \eqref{bg4} it follows that
\begin{equation*}
\sum_{\xi\in\cX_j}|\beta_{j\xi}(f)|^q
\le c b^{-jq(s'+d(\frac{1}{2}-\frac{1}{q}))}\|f\|_{B^{s'}_{q\tau}}^q,
\quad j\ge 0
\end{equation*}
We combine this with \eqref{est-II-III} and the fact that $J_n\le c\log n$ to obtain
\begin{align}\label{II,III}
II+ III
&\le c\lambda_n^{1-\frac qp}\|f\|_{B^{s'}_{q\tau}}^{\frac qp}
\sum_{0\le j\le J_n}\Big(b^{jd(\frac p2-1)}b^{-jq(s'+d(\frac{1}{2}-\frac{1}{q}))}\Big)^{\frac 1p}
\\
&\le c\lambda_n^{1-\frac qp}\|f\|_{B^{s'}_{q\tau}}^{\frac qp}J_n
\le c \log n\Big(\kappa\frac{\log n}n\Big)^{\frac{p-q}{2p}}\|f\|_{B^{s'}_{q\tau }}^{\frac qp}.\nonumber
\end{align}
Thus, in what follows we have to show that
$\|f\|_{B^{s'}_{q\tau}}\le c\|f\|_{B^{s}_{r\tau}}$
for a suitable $q <p$.

\subsubsection{Proof of Theorem~\ref{Tresh} $(i)$}

Let $1\le r\le\infty$, $0<\tau\le\infty$, $s>d/r$, and $m>0$.
Assume $f\in B^s_{r\tau}$ and $\|f\|_{B^s_{r\tau}}\le m$.
Then combining \eqref{est-stoch-infty}, \eqref{est-I}, \eqref{est-IV}, \eqref{est-III'}, \eqref{est-II'}, and \eqref{bias}
we obtain
$$
\bE\|\hat{f}_n-f\|_\infty \le c(m)\Big(\frac{\log n}n\Big)^{\frac{s-\frac dr}{2(s-d(\frac 1r -\frac 12))}},
$$
which confirms \eqref{main-1}.

\subsubsection{Proof of Theorem~\ref{Tresh} $(ii)$}

Let $1\le r\le\infty$, $0<\tau\le\infty$, $1\le p<\infty$, $s>d/r$, and $m>0$.
Assume $f\in B^s_{r\tau}$, $s\ge\frac{dp}{2}\big(\frac{1}{r}-\frac{1}{p}\big)$,
and $\|f\|_{B^s_{r\tau}}\le m$.
Choose $s'=s$.
Then using that from above
$$
\frac{d(p-q)}{2q}=s'=s\ge\frac{dp}{2}\big(\frac{1}{r}-\frac{1}{p}\big)
$$
we obtain
$$
q=\frac{dp}{2s+d}<p,
\quad \frac{p-q}{2p}=\frac{s}{2s+d}
\quad\text{and}\quad q\le r.
$$
From Proposition~\ref{Pr:Embed} (i) and (iii) it follows that
\begin{equation*}
\|f\|_{B^{s'}_{q\tau}} = \|f\|_{B^s_{q\tau}} \le c\|f\|_{B^s_{r\tau}}\le cm
\quad\hbox{and}\quad
\|f\|_\infty \le c\|f\|_{B^s_{r\tau}}\le cm.
\end{equation*}
and from \eqref{II,III} we conclude
\begin{align*}
II+III\le c\log n\left(\kappa\frac{\log n}n\right)^{\frac{s}{2s+d}}\|f \|_{B^{s}_{r\tau }}^{\frac{d}{2s+d}}
\le c(m)\log n\left(\frac{\log n}n\right)^{\frac{s}{2s+d}}.
\end{align*}
This estimate along with \eqref{est-stoch-p}, \eqref{est-I}, \eqref{est-IV}, and \eqref{bias}
imply \eqref{main-2}.

\subsubsection{Proof of Theorem~\ref{Tresh} $(iii)$}

Under the hypotheses of Theorem~\ref{Tresh}
let $f\in B^s_{r\tau}$, $s<\frac{dp}{2}\big(\frac{1}{r}-\frac{1}{p}\big)$,
and $\|f\|_{B^s_{r\tau}}\le m$.
In this case we choose $s':=s-d\big(\frac{1}{r}-\frac{1}{p}\big)>0$.
As above $s'=\frac{d(p-q)}{2q}$ and hence
$$
q=\frac{dp\big(\frac{1}{2}-\frac{1}{p}\big)}{s-d\big(\frac{1}{r}-\frac{1}{2}\big)}\ge r,
\quad p>2,\quad\text{and}\quad
\frac{p-q}{2p}=\frac{s-d\big(\frac{1}{r}-\frac{1}{p}\big)}{2\big(s-d\big(\frac{1}{r}-\frac{1}{2}\big)\big)}.
$$
By Proposition \ref{Pr:Embed} (ii) it follows that
$\|f\|_{B^{s'}_{q\tau}}\le c\|f\|_{B^s_{r\tau}}$
and hence from \eqref{II,III}
$$
II+III\le c(m) \log n\Big(\frac{\log n}n\Big)^{\frac{s-d(\frac{1}{r}-\frac{1}{p})}{2(s-d(\frac{1}{r}-\frac{1}{2}))}}.
$$
Combining this with \eqref{est-stoch-p}, \eqref{est-I}, \eqref{est-IV}, and \eqref{bias} imply \eqref{main-3}.

The proof of Theorem~\ref{Tresh} is complete.
\qed

\section{Examples of settings covered by our theory}\label{Examples}

In this section we present a number of examples of settings
that are covered by the general framework from Section~\ref{sec:setting}.

Clearly, the classical setup of $\MM=\R^d$ equipped with the Euclidean distance and Lebesgue measure,
and with $L=-\Delta$ the Laplacian obeys conditions {\bf C1}--{\bf C5}, together with {\bf C1A} from Section~\ref{sec:setting}.
Therefore, our results on density estimators apply.
They are compatible with the existing upper bound results.
For instance, our result on adaptive wavelet estimators (Theorem~\ref{Tresh})
is compatible with the result in the classical setting on $\R^d$ from \cite{DJ}.

The unit sphere $\bS^d$ in $\R^{d+1}$ equipped with the standard (geodesic) distance, measure, and
$-L$ being the Laplace-Beltrami operator is another example
of a~setup that obeys conditions {\bf C1}--{\bf C5}, together with {\bf C1A}.
Our upper bound result on adaptive wavelet estimators (Theorem~\ref{Tresh})
is compatible with the upper bound estimate on the adaptive needlet estimator in \cite{BKMP}.

A natural generalization of the above setup is the case of a compact Riemannian manifold $\MM\subset\R^m$
equipped with the natural Riemannian measure, geodesic distance,
and $-L$ being the Laplace-Beltrami operator.
Then conditions {\bf C1}--{\bf C5}, together with {\bf C1A} from Section~\ref{sec:setting} are satisfied.

For other examples on Riemannian manifolds and Lie groups we refer the reader to
\cite{CKP} and the references therein.

We next describe the recently developed in \cite{GPY} general setting
of a subset of $\R^n$ complemented by a differential operator $L$
that is a realization in local coordinates of a weighted Laplace operator on suitable subset of a Riemannian manifold.
In~this setting conditions {\bf C1}--{\bf C5} and in some cases {\bf C1A} from Section~\ref{sec:setting} are obeyed.
This setting covers the weighted settings on the interval, ball, and simplex,
which we will describe in more detail as well.

\subsection{Convex subsets of Riemannian manifolds and counterparts on \boldmath $\R^n$}\label{subsec:convex}

We assume that
$V\subset \bR^m$ is a connected open set in $\bR^m$ with the properties:
$X:=\overline{V}$ is compact,
$\mathring{X}=V$, and
$X \setminus V$ is of Lebesgue measure zero.
Let $L$ be a differential operator of the form
\begin{equation}\label{def-L}
L= \sum_{i,j=1}^m a_{ij}(x) \partial_i\partial_j + \sum_{j=1}^m b_j(x)\partial_j,
\end{equation}
where $a_{ij}$ and $b_j$ are polynomials of degrees two and one, respectively.
The underlying space is $\bL^2(V, \mu)$, where
$d\mu(x):=\ww(x) dx$
with
$\ww\in C^\infty(V)$, $\ww>0$,
and $\int_V \ww(x) dx <\infty$.

On the other hand, we assume that there is (a closely related) counterpart to the above setup.
Namely, we assume that $(M, d, \nu)$ is an $m$-dimensional complete Riemannian manifold and $M\subset \bR^m$,
where the Riemannian metric is induced by the inner product on $\bR^m$.
We stipulate two conditions on $(M, d, \nu)$:
(i) the volume doubling condition is valid,
and
(ii) the Poincar\'{e} inequality holds true (see \cite{GPY}).

Further, we assume that $(U, \varphi)$ is a chart on $M$,
where $U$ is a convex open relatively compact subset of $M$ such that
$\varphi$ maps diffeomorphically $U$ onto $V$,
where $V\subset \bR^d$ is from above.
We set $\phi:=\varphi^{-1}$ and $Y:=\overline{U}$.

The {\em key assumption} is that the map $\phi$ provides
an one-to-one correspondence between the elements of the setting
on $X$ from above and the setting on~$Y$.
More precisely,
it is assumed that the distance $\rho(\cdot, \cdot)$ on $X$ is induced by the geodesic distance $d(\cdot, \cdot)$ on $Y\subset M$.
The weighted measure $d\mu(x):=\ww(x) dx$ on $X$ is also induced by the respective wighted measure $\nu_w$ on $Y$.
Namely,
assuming that $g(x)=(g_{ij}(x))$ is the Riemannian tensor
it is stipulated that
$w>0$ is a $C^\infty(U)$ weight function that is
compatible with $\ww$ from above in the following sense:
\begin{equation}\label{weights}
\ww(x) :=  w(\phi(x)) \sqrt{\det g(x)},
\quad x\in V,
\end{equation}
It is assumed that $\nu_w= wd\nu$.
It is also assumed that the operator $L$ from \eqref{def-L} is a~realization in local coordinates
(via the chart $(U, \varphi)$) of the weighted Laplacian
$$
\Delta_w f:= \frac{1}{w}\diver (w\nabla f)
\quad\hbox{on $Y\subset M\quad$ (see \cite{GPY}).}
$$
In addition, it is assumed that the volume doubling condition \eqref{doubl-0} on $Y$
(and hence on $X$) is valid,
and a natural regularity condition on the weighted functions and
suitable Green's theorem are verified.
See \cite{GPY} for the details.

In \cite{GPY} it is shown that under the above conditions
the heat kernels associated to the operator $L$ and weighted Laplacian $\Delta_w$
have Gaussian localization just as in \eqref{Gauss-local} and as a consequence
the H\"{o}lder continuity (see \eqref{lip}) is valid.
Furthermore, these heat kernels have the Markov property (see \eqref{hol3}).

As a result, in the above described general setting conditions {\bf C1}--{\bf C5} in Section~\ref{sec:setting} are obeyed
and our results on kernel and linear wavelet density estimators apply
to the settings on $X\subset \R^m$ and on $Y\subset M$.
Furthermore, if the weight function $w\equiv 1$ on $Y$, then the measures $\mu$ on $X$ and $\nu_w$ on $Y$
verify the Ahlfors condition \eqref{doubling-0}, i.e. condition {\bf C1A} is satisfied,
and consequently our result on adaptive wavelet estimators (Theorem~\ref{Tresh}) applies.

We next show how the general theory described above is implemented in
specific settings on $[-1,1]$, the ball and simplex.

\subsection{Specific examples where our kernel and wavelet estimators work}\label{subsubsec:kern-walelet}

\subsubsection{Jacobi operator on $[-1, 1]$}\label{subsubsec:interval}

We consider the classical setting of $\MM =[-1, 1]$ equipped with the weighted measure
\begin{equation*}
d\mu(x) := w(x) dx = (1-x)^{\alpha}(1+x)^{\beta} dx, \quad \alpha, \beta>-1,
\end{equation*}
and the distance
$\rho(x,y) := |\arccos x - \arccos y|$,
complemented with the classical Jacobi operator, defined by
\begin{equation*}
Lf(x):=-\frac{\big[w(x)(1-x^2)f'(x)\big]'}{w(x)}.
\end{equation*}
As is well known the Jacobi polynomials are eigenfunctions of this operator.
Denote $B(x, r):=\{y\in [-1,1]: \rho(x, y)<r\}$.
It is easy to see that (see e.g. \cite{CKP})
\begin{equation}\label{int-measure}
|B(x, r)|\sim r(1-x+r^2)^{\alpha+1/2}(1+x+r^2)^{\beta+1/2}, \quad x\in [-1, 1], \; 0<r\le \pi.
\end{equation}
Hence, we have a doubling metric measure space with homogeneous dimension $d=1+(2\alpha+1)_+ \vee(2\beta+1)_+$.
More importantly, as is shown in \cite{CKP} the associated heat kernel
has Gaussian localization, H\"{o}lder continuity, and the Markov property (see also \cite{GPY}).
Therefore, conditions {\bf C1}--{\bf C5} in Section~\ref{sec:setting} are obeyed.

\medskip

\noindent
{\bf Ahlfors space on \boldmath $[-1, 1]$.}
In the above setting, if $\alpha=\beta=-1/2$
then from \eqref{int-measure}
\begin{equation*}
|B(x, r)|\sim r, \quad x\in [-1, 1], \; 0<r\le \pi.
\end{equation*}
Therefore, condition {\bf C1A} (see \S\ref{sec:setting}) is obeyed with $d=1$.

\subsubsection{Weighted unit ball}\label{subsubsec:ball}

Consider the case when $\MM$ is
$\bB^m:=\big\{x\in\R^m: \|x\|<1\big\}$ the unit ball in $\R^m$
equipped with the measure
\begin{equation*}\label{def-meas-ball}
d\mu := (1-\| x\|^2)^{\gamma-1/2} dx, \quad \gamma >-1,
\end{equation*}
and the distance
\begin{equation*}
\rho(x,y) := \arccos \big(\langle x, y\rangle + \sqrt{1-\| x\|^2}\sqrt{1-\| y\|^2}\big),
\end{equation*}
where $\langle x, y\rangle$ is the inner product of $x, y\in \R^m$
and $\|x\|:= \sqrt{\langle x, x\rangle}$.
Denoting
$
B(x, r):=\{y\in \bB^m: \rho(x,y)<r\}
$
it is easy to show (see \cite{DaiXu}) that
\begin{equation}\label{measure-ball}
|B(x, r)| \sim r^m(1-\|x\|^2+r^2)^\gamma,
\end{equation}
which implies that $(\MM, \mu, \rho)$
obeys the doubling condition \eqref{doubl-0} and non-collapsing condition \eqref{non-collapsing} and it is of homogeneous dimension $d=m+2\gamma_+$.

Consider the operator
\begin{equation*}
L:= -\sum_{i=1}^m (1-x_i^2)\partial^2_i + 2\sum_{1\le i < j \le m}x_i x_j\partial_i\partial_j
+ (n+2 \gamma)\sum_{i=1}^m x_i \partial_i,
\end{equation*}
acting on sufficiently smooth functions on $\bB^m$.
This operator is essentially self-adjoint and positive (see \cite{DaiXu} and also \cite{GPY}).
More importantly, its heat kernel
has Gaussian localization, H\"{o}lder continuity, and the Markov property (see \cite{GPY}).
In fact, $L$ is a realization of a weighted Laplace operator on the upper hemisphere of $\R^{m+1}$
in local coordinates (see \cite{GPY}).
Consequently, conditions {\bf C1}--{\bf C5} in Section \ref{sec:setting} are verified
and this setting falls in the general framework from Section \ref{sec:setting}.

\medskip

\noindent
{\bf Ahlfors space on the unit ball.}
Assume that in the above setting $\gamma=0$,
i.e. the measure on $\bB^m$ is $d\mu := (1-\| x\|^2)^{-1/2} dx$.
Then from \eqref{measure-ball}
\begin{equation*}
|B(x, r)|\sim r^m,
\quad x\in \bB^m, \; 0<r\le 1,
\end{equation*}
and hence condition {\bf C1A} in Section~\ref{sec:setting} is obeyed with $d=m$.
Thus this is another example of an Ahlfors space.

\subsubsection{Weighted simplex}\label{subsubsec:simplex}

We now consider the simplex
\begin{equation*}
\bT^m:=\Big\{x \in \bR^d: x_1 > 0,\dots, x_m>0,\; |x| < 1 \Big\},
\quad |x|:= x_1+\cdots+x_m,
\end{equation*}
in $\bR^m$ equipped with the measure
\begin{equation*}
d\mu(x) = \prod_{i=1}^{m} x_{i}^{\kappa_i-1/2}(1-|x|)^{\kappa_{m+1}-1/2}dx,
\quad  \kappa_i >-1/2,
\end{equation*}
and distance
\begin{equation*}
\rho(x,y) = \arccos \Big(\sum_{i=1}^m \sqrt{x_i y_i} + \sqrt{1-|x|}\sqrt{1-|y|}\Big).
\end{equation*}
Similarly as before we use the notation:
$B(x, r):=\{y\in\bT^m: \rho(x, y)<r\}.$
It is known (see \cite{GPY}) that
\begin{equation}\label{V-ball-simpl}
|B(x, r)| \sim r^m (1-|x|+r^2)^{\kappa_{m+1}}\prod_{i=1}^m(x_i+r^2)^{\kappa_i}.
\end{equation}
Hence, the doubling condition \eqref{doubl-0} and non-collapsing condition \eqref{non-collapsing} are satisfied. Moreover $d=m+2\big((\kappa_1)_+ +\cdots+(\kappa_{m+1})_+\big)$.

It is natural to consider the operator
\begin{equation*}
L := -\sum_{i=1}^m x_i\partial_i^2 + \sum_{i=1}^m\sum_{j=1}^m x_ix_j \partial_i\partial_j
- \sum_{i=1}^m \big(\kappa_i + \tfrac12 -(|\kappa|+ \tfrac{m+1}{2}) x_i\big) \partial_i
\end{equation*}
with $|\kappa|:=\kappa_1+\dots+\kappa_{m+1}$.
In \cite{GPY}
(see also \cite{GPY2})
 it is shown that
this operator is essentially self-adjoint and positive.
Furthermore, its heat kernel
has Gaussian localization, H\"{o}lder continuity, and the Markov property.
It is important to point out that the operator $L$ is a realization
of a weighted Laplacian on the sphere in the first octant in local coordinates (see \cite{GPY}).
Thus, conditions {\bf C1}--{\bf C5} in \S\ref{sec:setting} are verified
and this setup is covered by the general setting from Section~\ref{sec:setting}.

\medskip

\noindent
{\bf Ahlfors space on the simplex.}
If above $\kappa_i=0$, $i=1, \dots, m+1$,
then the measure is given by
$d\mu(x) = \prod_{i=1}^{m} x_{i}^{-1/2}(1-|x|)^{-1/2}dx$,
and \eqref{V-ball-simpl} yields
\begin{equation*}
|B(x, r)|\sim r^m,
\quad x\in \bT, \; 0<r\le 1.
\end{equation*}
Therefore, condition {\bf C1A} is again satisfied
and this is also an example of an Ahlfors space with $d=m$
where our result on adaptive wavelet estimators (Theorem~\ref{Tresh}) applies.

\section{Appendix}\label{sec:appendix}

\subsection{Proof of Proposition~\ref{prop:nikolski}}

Let $1\le p<q\le\infty$ and $g\in \Sigma_\lambda^p$, $\lambda\ge 1$.

Let $\varphi \in C^\infty_0(\R)$ be an even function with the properties:
$\supp \varphi \subset [-2, 2]$, $0\le \varphi\le 1$, and $\varphi(u)=1$ for $u\in [-1,1]$.

By Theorem~\ref{thm:S-local-kernels} $\varphi(\delta\sqrt{L})$ is an integral operator
whose kernel $\varphi(\delta\sqrt{L})(x, y)$ is real-valued, symmetric, and such that
for any $k>d$, $x, y\in \MM$, and $\delta>0$
\begin{equation}\label{local-ker}
|\varphi(\delta\sqrt{L})(x, y)|
\le c(k)|B(x, \delta)|^{-1}(1+\delta^{-1}\rho(x, y))^{-k},
\end{equation}
where the constant $c(k)>0$ depends only on $k$, $\varphi$, and the constants from the setting in Section~\ref{sec:setting}.

Observe that since $\varphi(u)=1$ for $u\in [-1,1]$ and $g\in \Sigma_\lambda^p$, we have
\begin{equation}\label{reproduce}
\varphi(\delta\sqrt{L})g=g
\quad\hbox{if}\quad \delta = 1/\lambda.
\end{equation}

Set $\delta:=1/\lambda$ and $k:=d+1$.
Define $r>1$ from the identity $1/p-1/q=1-1/r$.
Using \eqref{local-ker}, \eqref{tech-1}, and \eqref{DNC} we obtain
\begin{align}\label{norm-r}
\|\varphi(\delta\sqrt{L})(x, \cdot)\|_r 
&\le c(d+1) |B(x, \delta)|^{-1}\Big(\int_\MM (1+\delta^{-1}\rho(x, y))^{-r(d+1)} d\mu(y)\Big)^{1/r} \notag
\\
&\le c_d \tilde{c}|B(x, \delta)|^{1/r-1}
\le c_d \tilde{c} (c_0/c_3)^{1-1/r}\delta^{d(1/r-1)}
\le c_\star \delta^{d(1/r-1)}.
\end{align}
Here $\tilde{c}:= (2^{-d}-2^{-d-1})^{-1}$,
$c_d=c(d+1)$ is from \eqref{local-ker},
the constants $c_0, c_3>0$ are from \eqref{doubl-0} and \eqref{non-collapsing}.
We may assume that $c_0/c_3\ge 1$ and take
$c_\star = c_d \tilde{c} (c_0/c_3)$,
which is independent of $p$ and $q$.

Thus, using the symmetry we have
\begin{equation*}
\|\varphi(\delta\sqrt{L})(x, \cdot)\|_r
= \|\varphi(\delta\sqrt{L})(\cdot, y)\|_r
\le c_\star \delta^{d(1/r-1)}
= c_\star \delta^{d(1/q-1/p)}
\end{equation*}
We now use the well known Theorem 6.36 from \cite{Folland} to conclude that
\begin{equation*}
\|g\|_q=\|\varphi(\delta\sqrt{L})g\|_q
\le c_\star \delta^{d(1/q-1/p)}\|g\|_p
= c_\star \lambda^{d(1/p-1/q)}\|g\|_p.
\end{equation*}
Here we also used \eqref{reproduce}.
The proof is complete.
\qed

\subsection{Proof of Proposition \ref{Pr:Embed}}

(i) Let $1\le q\le r$, $0<\tau\le\infty$, $s>0$, $\mu(\MM)<\infty$, and $f\in B^s_{r\tau}$.
Let $\Phi_j$, $j\ge 0$, be the functions from the definition of Besov spaces, see Definition~\ref{def:Besov}.
Then by H\"{o}lder's inequality
$\|\Phi_j(\sqrt{L})f\|_q\le\mu(\MM)^{\frac{1}{q}-\frac{1}{r}}\|\Phi_j(\sqrt{L})f\|_r$,
which readily implies
$\|f\|_{B^s_{q\tau}}\le \mu(\MM)^{\frac{1}{q}-\frac{1}{r}} \|f\|_{B^s_{r\tau}}$
as claimed.

\smallskip

(ii) Let $1\le r\le q\le\infty$, $0<\tau\le\infty$, and $s>d\big(\frac{1}{r}-\frac{1}{q}\big)$.
Assume $f\in B^s_{r\tau}$.
Let $\Phi_j$, $j\ge 0$, be the functions from the definition of Besov spaces.
Clearly, $\Phi_j \subset [b^{j-1}, b^{j+1}]$ and hence
$\Phi_j(\sqrt{L})f\in \Sigma_{b^{j+1}}$.
Applying Proposition~\ref{prop:nikolski} we obtain
\begin{equation}\label{nikolski-embed}
\|\Phi_j(\sqrt{L})f\|_q \le c b^{jd(\frac 1r-\frac 1q)}\|\Phi_j(\sqrt{L})f\|_r.
\end{equation}
From this it readily follows that
$\|f\|_{B^{s-d(\frac{1}{r}-\frac{1}{q})}_{q\tau}}\le c\|f\|_{B^s_{r\tau}}$
as claimed.

\smallskip

(iii)
Assume $1\le r\le\infty$, $0<\tau\le\infty$, $s>d/r$, and $f\in B^s_{r\tau}$.
Let the functions $\Phi_j$, $j\ge 0$, in the definition of Besov spaces be selected so that
$\sum_{j=0}^{\infty}\Phi_j(\lambda)=1$ for $\lambda\ge0$.
For example, the functions $\Psi_j$, $j\ge 0$, from the definition of the frames in \S\ref{Frames}
have this property.
Then as in \eqref{frame-decom} we have
$f=\sum_{j=0}^{\infty}\Phi_j(\sqrt{L})f$.
Just as in \eqref{nikolski-embed} by Proposition~\ref{prop:nikolski} we have
\begin{equation*}
\|\Phi_j(\sqrt{L})f\|_\infty \le c b^{jd/r}\|\Phi_j(\sqrt{L})f\|_r.
\end{equation*}
Also, evidently by the definition of Besov spaces
$\|\Phi_j(\sqrt{L})f\|_r \le cb^{-js}\|f\|_{B^s_{r\tau}}$.
We use these two estimates and the fact that $s>d/r$ to obtain
\begin{align*}
\|f\|_{\infty}&\le\sum_{j=0}^{\infty}\|\Phi_j(\sqrt{L})f\|_{\infty}\le c\sum_{j=0}^{\infty}b^{jd/r}\|\Phi_j(\sqrt{L})f\|_r
\\
&\le c\sum_{j=0}^{\infty}b^{-j(s-d/r)}\|f\|_{B^s_{r\tau}}\le c\|f\|_{B^s_{r\tau}}.
\end{align*}

(iv)
Let $1\le p \leq r\le\infty$, $0<\tau\le\infty$, $s>0$, and $\mu(\MM)<\infty$.
Assume $f\in B^s_{r\tau}$.
Let $\Phi_j$, $j\ge 0$, be as in the proof of (iii) above.
Since $p\ge1$ and $s>0$ we have
$$
\|f\|_{p}\le\sum_{j=0}^{\infty}\|\Phi_j(\sqrt{L})f\|_{p}\le c\sum_{j=0}^{\infty}b^{-js}\|f\|_{B^s_{p\tau}}
\le c\|f\|_{B^s_{r\tau}},
$$
where for the last inequality we used (i).
\qed

\subsection{Proof of Proposition~\ref{prop:approx}}

Let $\bPhi_0$ be just as the function $\Phi$ in the definition of Besov spaces
in \S\ref{subsec:Besov} (see \eqref{Phi0}), that is,
$\bPhi_0\in C^{\infty}(\bR_+)$ is a real-valued function with the properties:
$\supp \bPhi_0\subset [0,b]$, $\bPhi_0(\lambda)=1$ for $\lambda\in[0,1]$,
and $\bPhi_0(\lambda)\ge c>0$ for $\lambda\in[0,b^{3/4}]$.
Set
$\bPhi(\lambda):= \bPhi_0(\lambda)-\bPhi_0(b\lambda)$
and
$\bPhi_j(\lambda):=\bPhi(b^{-j}\lambda)$, $j\ge 1$.
Clearly,
$\sum_{j\ge 0} \bPhi_j(\lambda)= 1$, $\lambda\in [0,\infty)$.

Let $0<\delta \le (2b^2)^{-1}$.
The case when $(2b^2)^{-1}<\delta \le 1$ is easier; we omit it.
Choose $\nu\ge 2$ and $\ell\ge 1$ ($\nu,\ell\in \bN$) so that
$b^{\nu}\le 1/(2\delta) < b^{\nu+1}$ and $1/\delta\le b^{\nu+\ell}$
($\ell:=\lfloor \log (2b)/\log b \rfloor +1$ will do).
From above and the properties of $\Phi$ in \eqref{prop-Phi}
it readily follows that
\begin{equation*}
1-\Phi(\delta\lambda)
= (1-\Phi(\delta\lambda))\sum_{j=\nu}^{\nu+\ell} \bPhi(b^{-j}\lambda)
+ \sum_{j=\nu+\ell+1}^\infty \bPhi(b^{-j}\lambda).
\end{equation*}
Set
\begin{equation}\label{def-Lam-T}
\Lambda(\lambda):= (1-\Phi(\delta b^\nu\lambda))\sum_{j=0}^{\ell} \bPhi(b^{-j}\lambda)
\quad\hbox{and}\quad
\Theta(\lambda):= \sum_{j=\nu-1}^{\nu+\ell+2} \bPhi(b^{-j}\lambda).
\end{equation}
By the construction of $\bPhi_j$ it follows that
$\supp \Lambda\subset [1, b^{\ell+1}]$ and
$\Theta(\lambda)=1$ for $\lambda\in[b^\nu, b^{\nu+\ell+1}]$,
implying
$\Lambda(b^{-\nu}\lambda)= \Lambda(b^{-\nu}\lambda)\Theta(\lambda)$.
Therefore, we have
\begin{equation*}
1-\Phi(\delta\lambda)
= \Lambda(b^{-\nu}\lambda)
+ \sum_{j=\nu+\ell+1}^\infty \bPhi(b^{-j}\lambda)
= \Lambda(b^{-\nu}\lambda)\Theta(\lambda)
+ \sum_{j=\nu+\ell+1}^\infty \bPhi(b^{-j}\lambda).
\end{equation*}
From this we infer that for any $f\in \bL^p$
\begin{equation*}
f-\Phi(\delta\sqrt{L})f
= \Lambda(b^{-\nu}\sqrt{L})\Theta(\sqrt{L})f
+ \sum_{j=\nu+\ell+1}^\infty \bPhi(b^{-j}\sqrt{L})f
\end{equation*}
and hence
\begin{equation}\label{est-Lp-norm}
\|f-\Phi(\delta\sqrt{L})f\|_p
\le \|\Lambda(b^{-\nu}\sqrt{L})\Theta(\sqrt{L})f\|_p
+ \sum_{j=\nu+\ell+1}^\infty \|\bPhi(b^{-j}\sqrt{L})f\|_p
\end{equation}
From the definition of $\Lambda(\lambda)$ in \eqref{def-Lam-T} it follow that
$\Lambda\in C^\infty(\bR_+)$, $\supp \Lambda\subset [1, b^{\ell+1}]$,
and $\|\Lambda^{(r)}\|_\infty\le c_r$ for $r=0, 1, \dots$
with the constant $c_r>0$ depending only on $r$, $\Phi$, $\bPhi$, $b$.
Now, we invoke Theorem~\ref{thm:S-local-kernels} to conclude that
$\Lambda(b^{-\nu}\sqrt{L})$ is an integral operator with kernel $\Lambda(b^{-\nu}\sqrt{L})(x, y)$
satisfying
\begin{equation*}
|\Lambda(b^{-\nu}\sqrt{L})(x, y)| \le c(k)b^{d\nu}\big(1+b^\nu\rho(x,y)\big)^k,
\quad x, y\in\MM, \; k>d.
\end{equation*}
As in \eqref{PHI1} this with $k=d+1$ implies
$
\|\Lambda(b^{-\nu}\sqrt{L})(\cdot, y)\|_1
= \|\Lambda(b^{-\nu}\sqrt{L})(x,\cdot)\|_1
\le c.
$
and applying Schur's lemma it follows that
$\|\Lambda(b^{-\nu}\sqrt{L})\|_{p\to p} \le c<\infty$.
As a consequence, we get
$$
\|\Lambda(b^{-\nu}\sqrt{L})\Theta(\sqrt{L})f\|_p
\le c\|\Theta(\sqrt{L})f\|_p
\le c\sum_{j=\nu-1}^{\nu+\ell+2} \|\bPhi(b^{-j}\sqrt{L})f\|_p.
$$
This coupled with \eqref{est-Lp-norm} implies, when $0<q<\infty$,
\begin{align*}
\|f-\Phi(\delta\sqrt{L})f\|_p
&\le c\sum_{j=\nu-1}^\infty \|\bPhi(b^{-j}\sqrt{L})f\|_p
\le c'b^{-s\nu}\Big(\sum_{j=\nu-1}^\infty\big(b^{sj}\|\bPhi(b^{-j}\sqrt{L})f\|_p\big)^q\Big)^{1/q}
\\
&\le c\delta^s \Big(\sum_{j=0}^\infty\big(b^{sj}\|\bPhi(b^{-j}\sqrt{L})f\|_p\big)^q\Big)^{1/q}
\le c\delta^s\|f\|_{B^s_{pq}}.
\end{align*}
For the second inequality above we use H\"{o}lder's inequality if $q>1$
and the $q$-inequality if $0<q\le 1$;
for the last inequality we used the fact that the definition of the Besov space $B^s_{pq}$
is independent of the particular choice of the functions $\Phi_0$ and $\Phi$ satisfying \eqref{Phi0}-\eqref{Phi}
in its definition, hance $\bPhi$ produces an equivalent norm.

The case when $q=\infty$ is as easy; we omit it.
The proof of Proposition~\ref{prop:approx} is complete.
\qed


\begin{thebibliography}{99}

\bibitem{BKMP}
    P. Baldi, G. Kerkyacharian, D. Marinucci, D. Picard,
    Adaptive density estimation for directional data using needlets,
    Ann. Statist. 37 (2009), no. 6A, 3362--3395.

\bibitem{CaKPi}
    I. Castillo, G. Kerkyacharian, D. Picard,
    Thomas Bayes' walk on manifolds,
    Probab. Theory Relat. Fields,  158 (2014), no. 3-4, 665--710.

\bibitem{Coifman-weiss}
    R. Coifman, G. Weiss, Analyse Harmonique Non-commutative sur Certains Espaces Homogenes.
    Lecture Notes in Math. Vol. 242. Springer,Berlin 1971.

\bibitem{CKP}
    T. Coulhon, G. Kerkyacharian, P. Petrushev,
    Heat Kernel Generated Frames in the Setting of Dirichlet Spaces, 	
    J. Fourier Anal. Appl. 18 (2012), no. 5, 995--1066.

\bibitem{DaiXu}
    F. Dai, Y. Xu,
    Approximation theory and harmonic analysis on spheres and balls,
    Springer Monographs in Mathematics, Springer 2013.

\bibitem{DJ}
    D. L. Donoho, I. M. Johnstone, G. Kerkyacharian, D. Picard,
    Density estimation by wavelet thresholding,
    Ann. Statist. 24 (1996), no. 2, 508--539.

\bibitem{F-J1}
    M. Frazier, B. Jawerth, Decomposition of Besov spaces,
    Indiana Univ. Math. J. 34 (1985), 777--799.

\bibitem{F-J2}
    M. Frazier, B. Jawerth,
    A discrete transform and decomposition of distribution spaces,
    J. Funct. Anal. 93 (1990), 34--170.

\bibitem{F-J3}
    M. Frazier, B. Jawerth, G. Weiss,
    Littlewood-Paley theory and the study of function spaces,
    CBMS No 79 (1991), AMS.

\bibitem{Folland}
    G. Folland,
    Real Analysis: Modern techniques and their applications, 2nd edn.
    Wiley-Interscience, New York, 1999.

\bibitem{GKKP}
    A. G. Georgiadis, G. Kerkyacharian, G. Kyriazis, P. Petrushev,
    Homogeneous Besov and Triebel-Lizorkin spaces of distributions associated to non-negative self-adjoint operators,
    J. Math. Anal. Appl.  449 (2017), no. 2, 1382--1412.


\bibitem{Oleg}
    A. Goldenshluger, O. Lepski,
    On adaptive minimax density estimator on $\R^d$,
    Probab. Theory Relat. Fields, 159 (2014), 479--543.

\bibitem{grigor}
    A. Grigor'yan,
    Heat kernel and analysis on manifold,
    AMS/IP studies in advance mathematics, 47, AMS Providence, RI; International Press, Boston, MA, 2009.

\bibitem{HKPT}
    W. H\"{a}rdle, G. Kerkyacharian, D. Picard, A. Tsybakov,
    Wavelets, approximation, and statistical applications,
    Lecture Notes in Statistics, 129, Springer-Verlag, New York, 1998. 

\bibitem{josi90}
    I.~M. Johnstone and B.~Silverman,
    Speed of estimation in positron emission tomography and related inverse problems,
    Ann. of Statist. 18 (1990), 251--280.

\bibitem{Lacroix-Lambert}
    A. Juditsky, S. Lambert-Lacroix,
    On minimax density estimation on $\R$,
    Bernoulli, 10 (2004), no. 2, 187--220.

\bibitem{KP}
    G. Kerkyacharian, P. Petrushev,
    Heat kernel based decomposition of spaces of distributions in the framework of Dirichlet spaces,
    Trans. Amer. Math. Soc. 367 (2015), 121--189.

\bibitem{needvd}
    G.~Kerkyacharian, P.~Petrushev, D.~Picard, T.~Willer,
    Needlet algorithms for estimation in inverse problems,
    Electron. J. Stat. 1:30--76 (electronic), 2007.

\bibitem{GPY}
    G. Kerkyacharian, P. Petrushev, Y. Xu,
    Gaussian bounds for the weighted heat kernels on the interval, ball and simplex,
    Constr. Approx. to appear. arXiv:1801.07325

\bibitem{GPY2}
    G. Kerkyacharian, P. Petrushev, Y. Xu,
    Gaussian bounds for the heat kernels on the ball and simplex: Classical approach,
    Studia Math. to appear. arXiv:1801.07326

\bibitem{KPi}
    G. Kerkyacharian, D. Picard,
    Density estimation in Besov spaces,
   Statist. Probab. Lett. 13 (1992), 15--24.

\bibitem{KPHP}
    G. Kerkyacharian, T. M. Pham Ngoc, D. Picard,
    Localized spherical deconvolution,
    Ann. Statist. 39 (2) (2011), 1042--1068


\bibitem{Lepski}
    O.~V. Lepski{\u\i},
    Asymptotically minimax adaptive estimation. {I}. {U}pper bounds.
  {O}ptimally adaptive estimates.
    Teor. Veroyatnost. i Primenen. 36(4) (1991), 645--659.

\bibitem{Olegsmethod}
    O. V. Lepski, E. Mammen, V. G. Spokoiny,
    Optimal spatial adaptation to inhomogeneous smoothness:
    an approach based on kernel estimates with variable bandwidth selectors,
    Ann. Statist. 25 (1997), no. 3, 929--947.

\bibitem{Pollard}
    D.Pollard,
    Convergence of stochastic Processes, Springer, New York,1984.

\bibitem{silverman}
    B. W. Silverman,
    Density estimation for statistics and data analysis,
    Monographs on Statistics and Applied Probability, Chapman \& Hall, London, 1986.

\bibitem{starck2010sparse}
    J. L. Starck, F. Murtagh, J. M. Fadili,
    Sparse image and signal processing: wavelets, curvelets, morphological diversity,
    Cambridge University Press, Cambradge, 2010.

\bibitem{Stein}
    E. Stein, G. Weiss,
    Introduction to Fourier analysis on Euclidean spaces,
    Princeton Mathematical Series, No. 32. Princeton University Press, Princeton, N.J., 1971.

\bibitem{Tsybakov}
    A. B. Tsybakov, Introduction to nonparametric estimation,
    Revised and extended from the 2004 French original.
    Translated by Vladimir Zaiats. Springer Series in Statistics, Springer, New York, 2009.

\bibitem{Yosida}
    K. Yosida, Functional Analysis, Springer, Berlin, 1980.

\end{thebibliography}
\end{document}